\theoremstyle{plain}
\newtheorem{thm}{Theorem}[section]
\newtheorem{prop}[thm]{Proposition}
\newtheorem{lem}[thm]{Lemma}
\newenvironment{customThm}[1]{\paragraph*{\textbf{Theorem #1.}}\itshape}{\par}
\newenvironment{customProp}[1]{\paragraph*{\textbf{Proposition #1.}}\itshape}{\par}
\newenvironment{customThm1}[2]{\paragraph*{\textbf{Theorem #1} (#2)\textbf{.}}\itshape}{\par}
\theoremstyle{definition}
\newtheorem{defn}[thm]{Definition}
\theoremstyle{remark}
\newcommand{\bbC}{\mathbb{C}} %complex numbers
\newcommand{\bbH}{\mathbb{H}} %hyperbolic space
\newcommand{\bbP}{\mathbb{P}}
\newcommand{\bbR}{\mathbb{R}} %reals
\newcommand{\bbS}{\mathbb{S}}
\newcommand{\PSL}{\text{PSL}}
\newcommand{\PGL}{\text{PGL}}
\newcommand{\psw}{\text{Pozzetti-Sambarino-Wienhard}}
\newcommand{\bbT}{\mathbb{T}}
\newcommand{\bbZ}{\mathbb{Z}} %integers
\newcommand*{\defeq}{\mathrel{\vcenter{\baselineskip0.5ex \lineskiplimit0pt
			\hbox{\scriptsize.}\hbox{\scriptsize.}}}%
	=}
\let\c@equation\c@thm
\numberwithin{equation}{section}
\title{Identities for hyperconvex Anosov representations}
\author{Yan Mary He}
\address{Department of Mathematics, University of Toronto, M5S 2E4, Canada; Math research unit, University of Luxembourg, L-4364, Luxembourg }
\email{yanmary.he@mail.utoronto.ca}
\date{\today}
\begin{document}

\maketitle
%\tableofcontents
\begin{abstract}
In this paper, we establish Basmajian's identity for $(1,1,2)$-hyperconvex Anosov representations from a free group into $\PGL(n,\bbR)$. We then study our series identities on
holomorphic families of Cantor non-conformal repellers associated to complex $(1,1,2)$-hyperconvex Anosov representations. We show that the series is absolutely summable if the Hausdorff dimension of the Cantor set is strictly less
than one. Throughout the domain of convergence, these identities can
be analytically continued and they exhibit nontrivial monodromy.
\end{abstract}

\section{Introduction}
If $\mathcal{C} \subset [0,1]$ is a Cantor set of zero measure, then $a_n$, the
length of the $n$th biggest component of $[0,1]-\mathcal{C}$, satisfy an identity $$1 = \sum_{n=1}^{\infty} a_n.$$ 
For a family of Cantor sets $\mathcal{C}_z \subset \bbC$ depending 
holomorphically on a complex parameter $z$, we obtain by analytic continuation a 
formal holomorphic {\em family} of identities $$S(z) = \sum_{n=1}^{\infty} a_n(z).$$ Thus it is natural to investigate the conditions under which the right-hand-side is absolutely summable.

In \cite{He}, we studied holomorphic families of Cantor sets $\mathcal{C}_z$ associated 
to familiar {\it conformal} dynamical systems, and introduced identities of this form for such families, of which a special case is Basmajian's identity for hyperbolic surfaces. 

In contrast, in this paper, we introduce identities of this form for holomorphic families of Cantor {\it non-conformal} repellers $\mathcal{C}_z$ on the complex projective space $\bbP(\bbC^n)$ which are associated to certain Anosov representations. We show that the right hand sides are absolutely summable if the Hausdorff dimension of $\mathcal{C}_z$ is strictly less than $1$. 

A special case of these identities, where $\mathcal{C}_z \subset \bbP(\bbR^n)$, generalizes Basmajian's identity for hyperbolic manifolds to real hyperconvex Anosov representations.

\subsection{Identities for real $(1,1,2)$-hyperconvex Anosov representations}
If $\Sigma$ is a connected compact oriented hyperbolic surface with geodesic boundary, an
{\em orthogeodesic} $\gamma \subset \Sigma$ is a properly immersed geodesic
arc perpendicular to $\partial \Sigma$. Basmajian (\cite{Bas})
proved the following identity:
\begin{equation}\label{bas_torus}
\text{length}(\partial \Sigma) = \sum_{\gamma} 2 \log \coth^2 \left(\dfrac{\text{length}(\gamma)}{2}\right)
\end{equation}
where the sum is taken over all orthogeodesics $\gamma$ in $\Sigma$.
The geometric meaning of this identity is that there is a canonical decomposition of 
$\partial \Sigma$ into a Cantor set (of zero measure), plus a countable
collection of complementary intervals, one for each orthogeodesic, whose
length depends only on the length of the corresponding orthogeodesic.

For simplicity, one can look at surfaces with a single boundary component.
A hyperbolic structure on $\Sigma$ is the same as a discrete faithful
representation $\rho:\pi_1\Sigma \to \text{PSL}(2,\bbR)$ acting on the upper
half-plane model in the usual way. After conjugation, we may
assume that $\rho(\partial \Sigma)$ stabilizes the positive imaginary axis 
$\ell$. 

Orthogeodesics correspond to double cosets of $\pi_1(\partial \Sigma)$ in
$\pi_1\Sigma$. For each nontrivial double coset $\pi_1(\partial \Sigma) w \pi_1(\partial \Sigma)$, the hyperbolic geodesic $w(\ell)$ corresponds to another boundary component of $\widetilde{\Sigma}$,
and the contribution to Basmajian's identity from this term is
$\log[\infty, 0, \rho(w)(\infty), \rho(w)(0)]$, where $[a,b,c,d]$ is the cross ratio on $\bbR\bbP^1$; hence
$$\text{length}({\partial \Sigma}) = \sum_w \log[\infty, 0, \rho(w)(\infty), \rho(w)(0)].$$

If $n \ge 3$ and $\rho : \pi_1\Sigma \to \PGL(n,\bbR)$ is $(1,1,2)$-hyperconvex in the sense of \psw~\cite{PSW}, then we establish in this paper the following Basmajian's identity:
\begin{equation}\label{eq_realhigher}
\ell_{\rho}(\rho([\partial \Sigma])) = \sum_{w} \log C_{ \rho}(\alpha_1^+,\alpha_1^-,w\cdot \alpha_1^+,w \cdot \alpha_1^-)
\end{equation}
where $\ell_{\rho}$ is a notion of {\it length} with respect to $\rho$, $C_{\rho}$ is a cross ratio defined for four points on the boundary at infinity $\partial\pi_1\Sigma$ of the hyperbolic group $\pi_1\Sigma$ and $\alpha_1^{\pm}$ are the (attracting and repelling) fixed points of $[\partial\Sigma]$ on $\partial\pi_1\Sigma$.
Moreover, if $\rho$ is Hitchin (which is $(1,1,2)$-hyperconvex), equation (\ref{eq_realhigher}) recovers Vlamis-Yarmola's identity \cite{Vlamis}.

\subsection{Complexified identities}
If we deform a real $(1,1,2)$-hyperconvex Anosov representation $\rho: \pi_1\Sigma \to \PGL(n,\bbR)$ by a small deformation to some nearby representation $\rho_z:\pi_1\Sigma \to \text{PGL}(n,\bbC)$,
and replace each $\rho$ by $\rho_z$ above, we obtain a formula for the
{\em complex length} of $\rho_z([\partial \Sigma])$. This is the desired
complexification of our identity (\ref{eq_realhigher}).

Since the right-hand-side of the identity is an infinite series, it would not make sense unless we address the convergence issue. More specifically, we need to find a subspace in the $\PGL(n,\bbC)$-character variety on which the right-hand-series is absolutely convergent. This is where the main technical difficulty lies because the size of the terms in the right-hand-side series are determined by the cross ratios of points in the {\it limit set} of $\rho$, which can be thought of as the repeller of the projective action of $\rho(\pi_1\Sigma)$ on the projective space $\bbP(\bbC^n)$. For Anosov representations, this action on the projective space is non-conformal and it is the lack of conformality that makes it hard to understand the limit set.

In a recent paper (\cite{PSW}), Pozzetti-Sambarino-Wienhard introduced and studied a class of Anosov representations, namely the $(1,1,2)$-hyperconvex Anosov representations. For this class of Anosov representations, the projective action of $\rho(\pi_1\Sigma)$ on $\bbP(\bbC^n)$ admits enough {\it local conformality} and therefore the behavior and the Hausdorff dimension of the limit set of $\rho$ can be understood.

For this reason, we consider the deformation space $\mathcal{S}$ of characters $\rho: \pi_1\Sigma \to \PGL(n,\bbC)$ which are $(1,1,2)$-hyperconvex. 

It turns out that the series makes sense, and is absolutely convergent, throughout
the subset $\mathcal{S}_{<1}$ of $\mathcal{S}$ where the Hausdorff dimension of
the limit set of $\rho_z$ is less than one. Since $\log$ is
multivalued, it is important to choose the correct branch at a real 
representation, and then follow the branch by analytic continuation. The space $\mathcal{S}_{<1}$ is not simply-connected and some terms exhibit monodromy (in the form of integer multiples of $2\pi i$) when they are analytically continued around homotopically nontrivial loops.

\subsection{Statement of results}
The first main theorem of our paper is the following Basmajian's identity for real $(1,1,2)$-hyperconvex Anosov representations.
\vspace{0.3cm}
\begin{customThm}{3.1}
	Let $\Sigma$ be a connected compact oriented hyperbolic surface with geodesic boundaries $a_1,\cdots, a_k$ whose double has genus at least $2$. Let $\alpha_1,\cdots, \alpha_k \in \pi_1\Sigma$ represent the free homotopy classes of $a_1,\cdots, a_k$. If $\rho: \pi_1\Sigma \to \text{PGL}(n, \bbR)$ is $(1,1,2)$-hyperconvex, then
	\begin{equation} \label{eq_algintro}
	\sum_{j=1}^k \ell_{\rho}(\rho(\alpha_j)) = \sum_{j,q =1}^k \sum_{w \in \mathcal{L}_{j,q}} \log C_{ \rho}(\alpha_j^+,\alpha_j^-;w\cdot \alpha_q^+,w \cdot \alpha_q^-) 
	\end{equation}
	where $\alpha_j^+,\alpha_j^-$ are the attracting and repelling fixed points of $\alpha_j$, respectively. Furthermore, if $\rho$ is Hitchin, it is Vlamis-Yarmola's identity.
\end{customThm}
\vspace{0.3cm}

Here $\mathcal{L}_{j,q}$ is a set of double coset representatives associated to boundary components $a_j$ and $a_q$, and $\ell_{\rho}$ and $C_{\rho}$ denote certain length and cross ratio associated to $\rho$ respectively.

Our next theorem concerns the complexification of our identity (\ref{eq_algintro}). More precisely,
\vspace{0.3cm}
\begin{customThm1}{5.1}{Complexified identities}
Let $\Sigma$ be a connected compact oriented hyperbolic surface with geodesic boundaries $a_1,\cdots, a_k$ whose double has genus at least $2$. Let $\alpha_1,\cdots, \alpha_k \in \pi_1\Sigma$ represent the free homotopy classes of $a_1,\cdots, a_k$. If $\rho_0: \pi_1\Sigma \to \PGL(n, \bbR)$ is $(1,1,2)$-hyperconvex and $\rho$ is in the same path component as $\rho_0$ in $\mathcal{S}_{<1}$, then
\begin{equation} \label{eq_alg}
\sum_{j=1}^k \ell_{\rho}(\rho(\alpha_j)) = \sum_{j,q =1}^k \sum_{w \in \mathcal{L}_{j,q}} \log C_{ \rho}(\alpha_j^+,\alpha_j^-;w\cdot \alpha_q^+,w \cdot \alpha_q^-) \text{~~mod~~} 2\pi i,
\end{equation}
where $\alpha_j^+,\alpha_j^-$ are the attracting and repelling fixed points of $\alpha_j$, respectively. Moreover, the series converges absolutely.
\end{customThm1}
\vspace{0.3cm}

Here $\ell_{\rho}$ and $C_{\rho}$ denote the complex length and complex cross ratio respectively, and $\mathcal{S}_{<1}$ is the space of $(1,1,2)$-hyperconvex Anosov representations $\rho: \pi_1\Sigma \to \PGL(n, \bbC)$ whose limit set $\zeta^1(\partial\pi_1\Sigma)$ has Hausdorff dimension strictly smaller than one.

The key ingredient in the proof of the above theorem are the following analytic results.
\vspace{0.3cm}
\begin{customProp}{5.4}
Given a $(1,1,2)$-hyperconvex Anosov representation $\rho: \pi_1\Sigma \to \text{PGL}(n,\bbC)$, the series in (\ref*{eq_algintro}) converges absolutely if the Hausdorff dimension of the limit set $\zeta^{1}(\partial\pi_1\Sigma)$ is strictly less than one.
\end{customProp}
\vspace{0.3cm}

Throughout the domain of convergence, we wish to extend the identity via analytic continuation. To this end, we show that the series is uniformly convergent on compact subsets of $\mathcal{S}_{<1}$ and therefore defines a holomorphic function on $\mathcal{S}_{<1}$.
\vspace{0.3cm}
\begin{customProp}{5.7} 
	Let $\rho_t: \pi_1\Sigma \to \PGL(n,\bbC), t \in [0,1]$ be a deformation of a real $(1,1,2)$-hyperconvex representation $\rho_0$ in $\mathcal{S}_{<1}$. Then the series in (\ref*{eq_algintro}) is uniformly convergent on $[0,1]$.
\end{customProp} 
\vspace{0.3cm}

\subsection{Strategy of proofs}
The proof of our identities for real hyperconvex Anosov representations follows the same decomposition idea as that of Basmajian's identity; that is, we show that there is a canonical decomposition of 
an interval in $\bbR$ into a Cantor set of measure zero, plus a countable
collection of complementary intervals, one for each orthogeodesic, whose
length is given by log of the corresponding cross ratio. In the setting of Anosov representations, our proof adopts the same procedure as Vlamis-Yarmola's proof of Basmajian's identity for Hitchin representations.

Our main analytic result --- the relation between the growth rate of the terms in the right-hand-side of the identities and the Hausdorff dimension --- is proved by using techniques developed by Pozzetti-Sambarino-Wienhard. The main geometric idea is that the size of the terms in the identity can be related to the ratio of singular values $\frac{\sigma_2}{\sigma_1}(\rho(w))$ which are the terms in the Poincar\'e series $$\eta(s) = \sum_{\gamma \in \Gamma} 
\left(\frac{\sigma_2}{\sigma_1}(\rho(\gamma))\right)^s.$$ Pozzetti-Sambarino-Wienhard showed that for $(1,1,2)$-hyperconvex Anosov representations, the critical exponent of $\eta(s)$ equals the Hausdorff dimension of the limit set.

\subsection{$L$-functions and counting results}
In light of Dirichlet's unit theorem, we suggested in \cite{He} that the study of families of Basmajian-type identities is analogous to the idea of studying $L$-functions expressed in series form. In \cite{He1}, we implemented this idea and obtained the first set of results in this direction, namely, the prime number theorems for Basmajian-type identities. In the context of Anosov representations, we expect that one could obtain similar counting and equidistribution results.

\subsection{Organization of the paper} The paper is organized as follows.
In section 2 we set up necessary background. In particular, we define the deformation space and discuss complex cross ratios and complex lengths for Anosov representations. We prove our identities for real $(1,1,2)$-hyperconvex Anosov representations in section 3.
In section 4 we review elements of the Pozzetti-Sambarino-Wienhard paper and introduce the main technical tools and estimates we will need. Finally, in section 5 we prove the main theorem Theorem \ref{thm_cxhigherBas}. In section 6, we discuss monodromy of loops and the topology of $\mathcal{S}_{<1}$.

\subsection{Acknowledgements}
I would like to thank Antonin Guilloux and Andres Sambarino for helpful conversations. I thank Beatrice Pozzetti for pointing out an error in Proposition \ref*{prop_conv}. I thank Binbin Xu for patiently answering my basic questions regarding Anosov representations.

\section{Background}
In this section, we set up necessary background for establishing identities for hyperconvex Anosov representations. In particular, in subsection \ref{subsec_defsp}, we review the basics of Hitchin representations and 
introduce a special class of Anosov representations of a free group into $\PGL(n, \bbC)$, namely the $(1,1,2)$-hyperconvex Anosov representations in the sense of Pozzetti-Sambarino-Wienhard \cite{PSW}. This class of Anosov representations will serve as our deformation space. 

To complexify both sides of the identity, we need to consider complex cross ratios and complex length of any nontrivial element in the fundamental group. We discuss these topics in subsections \ref{subsec_cr} and \ref{subsec_cl} respectively. In the last subsection, we discuss how to represent orthogeodesics algebraically by words in the fundamental group.

\subsection{A space of Anosov representations} \label{subsec_defsp}
Let $\Sigma$ be a connected compact oriented surface with boundary whose double $\hat{\Sigma}$ has genus at least $2$. 

A homomorphism $\rho : \pi_1\Sigma \to \PSL(2,\bbR)$ is {\it Fuchsian} if it is discrete faithful and convex cocompact. A homomorphism $\rho : \pi_1\Sigma \to \PSL(n,\bbR)$ is {\it n-Fuchsian} if $\rho = \iota \circ \rho_0$ where $\iota : \PSL(2,\bbR) \to \PSL(n,\bbR)$ is the irreducible representation and $\rho_0$ is Fuchsian. An element in $\PSL(n,\bbR)$ is {\it purely loxodromic} if all its eigenvalues have multiplicity $1$. Following Labourie and McShane \cite{LabMc}, we say that a homomorphism $\rho : \pi_1\Sigma \to \PSL(n,\bbR)$ is a {\it Hitchin homomorphism} if it can be deformed into an $n$-Fuchsian homomorphism such that the image of each boundary component stays purely loxodromic. A Hitchin representation is the conjugacy class of a Hitchin homomorphism. Labourie-McShane (\cite{LabMc}, Corollary 9.2.2.4) showed that every Hitchin representation $\rho$ for $\Sigma$ is the restriction of a Hitchin representation $\hat{\rho}$ for the closed surface $\hat{\Sigma}$. 

Let $K = \bbR$ or $\bbC$. Given an integer $p$ with $1 \le p \le n-1$, denote by $\mathcal{G}_p(K^n)$ the Grassmannian of $p$-dimensional subspaces of $K^n$.
A homomorphism $\rho : \pi_1\Sigma \to \PGL(n,K)$ is {\it $\{a_p\}$-Anosov} if there exist $\rho$-equivariant H\"older maps $(\zeta_{\rho}^p, \zeta_{\rho}^{n-p}) : \partial \pi_1\Sigma \to \mathcal{G}_p(K^n) \times \mathcal{G}_{n-p}(K^n)$ such that for any $x, y \in \partial\pi_1\Sigma$ with $x \neq y$, we have 
$$\zeta_{\rho}^p(x) \oplus \zeta_{\rho}^{n-p}(y) = K^n,$$ and a suitable associated flow is contracting (see \cite{BCLS}, \cite{Lab06}, \cite{PSW} for details). To simplify notation, we write $\zeta^p = \zeta_{\rho}^p$ if the representation is understood. %and we denote by $z^{p}_{\rho} = \zeta^p(z), z \in \partial\pi_1\hat{\Sigma}$.
An $a_1$-Anosov representation is also called a projective Anosov representation.

We remark that the {\it boundary at infinity} $\partial\pi_1\Sigma$ of $\pi_1\Sigma$ is defined to be the intersection $\bar{U} \cap \partial\bbH^2$, where $U$ is the universal cover of $\Sigma$. This definition is independent of the choice of a hyperbolic metric on $\Sigma$ since there exists a unique $\pi_1\Sigma$-equivariant H\"older homeomorphism between the limit sets $\rho_1(\pi_1\Sigma)$ and $\rho_2(\pi_1\Sigma)$ where $\rho_1$ and $\rho_2$ are two hyperbolic metrics. This fact also gives $\partial\pi_1\Sigma$ a H\"older structure by embedding it in different ways as a limit set. Moreover, $\partial\pi_1\Sigma$ inherits a metric from $\bbS_{\infty}^1 = \partial\bbH^2$ and the fundamental group $\pi_1\Sigma$ acts as H\"older homeomorphisms on $\partial\pi_1\Sigma$.

Let $p,q,r \in \{1,\cdots, n-1\}$ be integers such that $p+q \le r$. Following Pozzetti-Sambarino-Wienhard (\cite{PSW}), we say that a representation $\rho: \pi_1\hat{\Sigma} \to \PGL(n,K)$ is {\it $(p,q,r)$-hyperconvex} if it is $\{a_p, a_q, a_r\}$-Anosov and for every triple of pairwise distinct points $x,y,z \in \partial\pi_1\Sigma$, we have $$(\zeta^p(x) \oplus \zeta^q(y) ) \cap \zeta^{n-r}(z) = \{0\}.$$ A Hitchin representation $\rho : \pi_1\hat{\Sigma} \to \PSL(n,\bbR)$ is $(p,q,r)$-hyperconvex for any $p+q=r$ (\cite{Lab06} or \cite{PSW} Theorem 9.5).

In this paper, we consider the space $\mathcal{S}$ of $(1,1,2)$-hyperconvex Anosov representations $\rho: \pi_1\Sigma \to \PGL(n,\bbC)$. Note that $\mathcal{S}$ is open in the character variety as the set of $(1,1,2)$-hyperconvex representations is open in Hom$(\pi_1\Sigma, \PGL(n,\bbC))$. If $\rho$ is conjugate to a representation into $\PGL(n,\bbR)$, we call such $\rho$ a real $(1,1,2)$-hyperconvex Anosov representation.

We define the space $\mathcal{S}_{<1}$ to be the set of characters $\rho \in \mathcal{S}$ such that the Hausdorff dimension of the limit set $\zeta^1_{\rho}(\partial\pi_1\Sigma)$ is strictly smaller than one.
We note that $\mathcal{S}_{<1}$ is open in Hom$(\pi_1\Sigma, \PGL(n,\bbC))$ as the set of $(1,1,2)$-hyperconvex representations is open in Hom$(\pi_1\Sigma, \PGL(n,\bbC))$ and the Hausdorff dimension of the limit set is an analytic function on $\mathcal{S}$ (\cite{PSW}, Corollary 1.1).

\subsection{Cross ratios} \label{subsec_cr}
In this subsection, we define a cross ratio for four points on the boundary $\partial\pi_1\Sigma$ which is a natural complexification of Labourie's cross ratio (see \cite{Lab07}, \cite{LabMc}). This is achieved by using the transverse limit maps of a projective Anosov representation which take points of $\partial\pi_1\Sigma$ into the projective spaces, and then taking the cross ratio considered by Falbel-Guilloux-Will \cite{FGW} that we introduce now.

Let $n \ge 3$ be an integer and denote by $\bbC^{n*}$ the dual of $\bbC^{n}$. Consider two non-empty sets $\Omega \subset \bbP (\bbC^{n})$ and $\Lambda \subset \bbP(\bbC^{n*})$ such that $$\varphi(\omega) \neq 0$$ for any $\omega \in \Omega$ and $\varphi \in \Lambda$. In other words, $\Omega$ is disjoint from all hyperplanes defined by points in $\Lambda$.

Let $(\varphi, \varphi', \omega, \omega')\in \Lambda^2 \times \Omega^2$ with $\omega \neq \omega'$ so that $\omega$ and $\omega'$ span a complex projective line $(\omega\omega')$. In \cite{FGW}, Falbel-Guilloux-Will considered the following cross ratio
$$[\varphi, \varphi', \omega, \omega']_{FGW} = [\varphi_{\omega,\omega'}, \varphi'_{\omega,\omega'}, \omega, \omega']$$ where $\varphi_{\omega,\omega'}$ and $\varphi'_{\omega,\omega'}$ are the points $\text{ker}(\varphi) \cap (\omega\omega')$ and $\text{ker}(\varphi') \cap (\omega\omega')$ in $(\omega\omega')$ respectively, and $[a,b,c,d]$ is the usual cross ratio for four distinct points in $\bbC\bbP^1 = (\omega\omega')$, namely $[a,b,c,d] = (a-c)(b-d)/(a-d)(b-c)$.

The FGW-cross ratio satisfies the following properties.
\begin{lem}[\cite{FGW}, Lemma 2.3, Propsition 2.4] \label{lem_cr_FGW}
Given $(\varphi, \varphi', \omega, \omega')\in \Lambda^2 \times \Omega^2$, the cross ratio $[\varphi, \varphi', \omega, \omega']_{FGW}$ satisfies $$[\varphi, \varphi', \omega, \omega']_{FGW} = \frac{\bar{\varphi}(\bar{\omega})\bar{\varphi}'(\bar{\omega}')}{\bar{\varphi}(\bar{\omega}')\bar{\varphi}'(\bar{\omega})}$$
where $\bar{\varphi}, \bar{\varphi}', \bar{\omega}$ and $\bar{\omega'}$ are the lifts of $\varphi, \varphi', \omega$ and $\omega'$. Moreover, if $\omega, \omega', \omega''$ are three points in $\Omega$ and $\varphi, \varphi', \varphi''$ are three points in $\Lambda$, we have
\begin{enumerate}
	\item $[\varphi, \varphi', \omega, \omega']_{FGW} = 1$ if and only if $\varphi = \varphi'$ or $\omega = \omega'$,
	\item $[\varphi, \varphi', \omega, \omega']_{FGW} = [\varphi, \varphi', \omega, \omega'']_{FGW}[\varphi, \varphi', \omega'', \omega']_{FGW}$, and
	\item $[\varphi, \varphi', \omega, \omega']_{FGW} = [\varphi, \varphi', \omega', \omega]_{FGW}^{-1}$.
\end{enumerate}
\end{lem}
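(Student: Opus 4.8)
The plan is to compute $[\varphi,\varphi',\omega,\omega']_{FGW}$ directly in a suitable affine coordinate on the projective line $(\omega\omega')$, obtain the stated closed formula, and then deduce the three properties as elementary consequences of that formula together with the classical facts about the cross ratio on $\bbC\bbP^1$.

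First I would fix lifts $\bar\omega,\bar\omega'\in\bbC^n$ of $\omega\neq\omega'$; they are linearly independent, so the line $(\omega\omega')$ is parametrized by $t\mapsto[\bar\omega+t\bar\omega']$, $t\in\bbC\cup\{\infty\}$, with $0\mapsto\omega$ and $\infty\mapsto\omega'$. For a lift $\bar\varphi$ of $\varphi$, the point $\varphi_{\omega,\omega'}=\ker(\varphi)\cap(\omega\omega')$ is the parameter value $t_\varphi$ solving $\bar\varphi(\bar\omega)+t_\varphi\,\bar\varphi(\bar\omega')=0$, i.e. $t_\varphi=-\bar\varphi(\bar\omega)/\bar\varphi(\bar\omega')$; the standing hypothesis that $\Omega$ misses every hyperplane coming from $\Lambda$ guarantees $\bar\varphi(\bar\omega),\bar\varphi(\bar\omega')\neq 0$, so $t_\varphi\in\bbC^{\times}$ is well-defined, and likewise $t_{\varphi'}=-\bar\varphi'(\bar\omega)/\bar\varphi'(\bar\omega')$. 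Substituting into $[a,b,c,d]=(a-c)(b-d)/(a-d)(b-c)$ with $c\mapsto 0$ and $d\mapsto\infty$ — where the two factors containing $\infty$ cancel in the limit — gives
$$[\varphi,\varphi',\omega,\omega']_{FGW}=[t_\varphi,t_{\varphi'},0,\infty]=\frac{t_\varphi}{t_{\varphi'}}=\frac{\bar\varphi(\bar\omega)\,\bar\varphi'(\bar\omega')}{\bar\varphi(\bar\omega')\,\bar\varphi'(\bar\omega)}.$$
This expression is manifestly insensitive to the choices of lifts (rescaling any of $\bar\varphi,\bar\varphi',\bar\omega,\bar\omega'$ scales numerator and denominator equally), which simultaneously confirms that $[\,\cdot\,]_{FGW}$ is well-defined and that it does not depend on the auxiliary parametrization.

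Granting the formula, property (2) drops out: fixing one lift $\bar\omega''$ of $\omega''$ and expanding $[\varphi,\varphi',\omega,\omega'']_{FGW}\,[\varphi,\varphi',\omega'',\omega']_{FGW}$ via the formula, the terms $\bar\varphi(\bar\omega'')$ and $\bar\varphi'(\bar\omega'')$ cancel and leave exactly $[\varphi,\varphi',\omega,\omega']_{FGW}$. Property (3) is even more immediate — swapping $\omega\leftrightarrow\omega'$ inverts the right-hand side — and can alternatively be seen without the formula, since $\varphi_{\omega',\omega}=\varphi_{\omega,\omega'}$ and $[a,b,d,c]=[a,b,c,d]^{-1}$ for the ordinary cross ratio. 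For property (1), I would invoke the elementary identity $[a,b,c,d]=1$ if and only if $a=b$ or $c=d$; applied to $[\varphi_{\omega,\omega'},\varphi'_{\omega,\omega'},\omega,\omega']$ this says the FGW cross ratio equals $1$ precisely when $\omega=\omega'$ or $t_\varphi=t_{\varphi'}$, i.e. $\ker\varphi$ and $\ker\varphi'$ cut the line $(\omega\omega')$ at the same point; under the hypotheses on the pair $(\Omega,\Lambda)$ this latter condition is equivalent to $\varphi=\varphi'$, which is the assertion.

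I do not expect a genuine obstacle here: each step is a one-line computation. The places that need care are the bookkeeping with projective lifts (so that nothing secretly depends on a choice), the limiting argument legitimizing the value $d=\infty$ in the $\bbC\bbP^1$ cross ratio, and — for the converse half of property (1) — articulating precisely why two hyperplanes meeting $(\omega\omega')$ in a common point must be projectively equal in the configuration at hand, rather than merely agreeing along that line.
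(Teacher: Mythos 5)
The paper offers no proof of this lemma at all: it is imported verbatim from Falbel--Guilloux--Will (their Lemma 2.3 and Proposition 2.4), so your computation is supplying an argument where the paper supplies only a citation. Your derivation of the closed formula is correct: the affine chart $t\mapsto[\bar{\omega}+t\bar{\omega}']$ sends $\omega,\omega'$ to $0,\infty$ and $\varphi_{\omega,\omega'}$ to $t_\varphi=-\bar{\varphi}(\bar{\omega})/\bar{\varphi}(\bar{\omega}')$, the value $[t_\varphi,t_{\varphi'},0,\infty]=t_\varphi/t_{\varphi'}$ gives exactly the stated expression, and lift-independence is visible by inspection since each lift occurs once upstairs and once downstairs. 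Properties (2) and (3) then follow exactly as you say; it is worth noting that (2) is genuinely easier from the formula than from the definition, because the three cross ratios involved are computed on three \emph{different} projective lines, so the formula is doing real work there.

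The gap is in the ``only if'' half of property (1). From $[a,b,c,d]=1\iff(a-b)(c-d)=0$ you correctly reduce to: the cross ratio equals $1$ iff $\omega=\omega'$ or $\varphi_{\omega,\omega'}=\varphi'_{\omega,\omega'}$, i.e.\ iff the hyperplanes $\ker\varphi$ and $\ker\varphi'$ meet the line $(\omega\omega')$ in the same point. But for $n\ge 3$ this is strictly weaker than $\varphi=\varphi'$, and the only standing hypothesis on $(\Omega,\Lambda)$ --- that $\varphi(\omega)\neq 0$ for all pairs --- does not exclude it. Concretely, in $\bbC^3$ take $\bar{\varphi}=e_1^*$, $\bar{\varphi}'=e_2^*$, $\bar{\omega}=(1,1,0)$, $\bar{\omega}'=(1,1,1)$: all four pairings equal $1$, so the cross ratio is $1$, yet $\varphi\neq\varphi'$ and $\omega\neq\omega'$. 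So the converse of (1) is simply false at the stated level of generality; it becomes true only under an additional transversality assumption on the configuration (distinct elements of $\Lambda$ must cut every secant line of $\Omega$ in distinct points), which has to be imported explicitly from the setting of \cite{FGW} --- in this paper's application it is the kind of condition that hyperconvexity of the limit maps is meant to supply. You flagged this as the delicate point yourself, but the appeal to ``the hypotheses on the pair $(\Omega,\Lambda)$'' does not close it, since those hypotheses as written are insufficient.
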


Now we define a cross ratio for four points on the boundary $\partial\pi_1\Sigma$. Let $\rho : \pi_1\Sigma \to \PGL(n,\bbC)$ be a $(1,1,2)$-hyperconvex Anosov representation with limit maps $\zeta^1$ and $\zeta^{n-1}$. The {\it complex cross ratio} is a continuous function $C_{\rho} : (\partial\pi_1\Sigma)^{4*} \to \bbR$ given by
$$C_{\rho}(x,y,u,v) = [\zeta^{n-1}(x), \zeta^{n-1}(y), \zeta^{1}(u), \zeta^{1}(v)]_{FGW}.$$
We note that if $\rho : \pi_1\Sigma \to \PSL(n,\bbR)$ is Hitchin, then $C_{\rho}$ is Labourie's cross ratio.

Since $\Sigma$ is oriented, $\partial \pi_1\Sigma$ which can be identified as a subset of $\partial\bbH^2$ inherits a cyclic ordering from the orientation of $\Sigma$. 
For convention, we choose the orientation to be counterclockwise. We say that a quadruple $(x,y,z,w)$ is {\it cyclically ordered} if the triples $(x,y,z)$, $(y,z,w)$ and $(z,w,x)$ are either all positively oriented or negatively oriented.

\begin{lem} \label{lem_pos}
Let $\rho : \pi_1\Sigma \to \PGL(n,\bbR)$ be $(1,1,2)$-hyperconvex. The cross ratio $C_{\rho}$ satisfies the following properties
\begin{enumerate}
\item $C_{\rho}(x,y,u,v) = 0 \iff x = u$ or $y=v$.
\item $C_{\rho}(x,y,u,v) = 1 \iff x = y$ or $u=v$.
\item $C_{\rho}(x,y,u,v) = C_{\rho}(x,y,u,v')C_{\rho}(x,y,v',v)$
\item $C_{\rho}(x,y,u,v) = C_{\rho}(x,y,v,u)^{-1}$
\item For any cyclically ordered quadruple $(x, v, u, y) \in (\partial\pi_1\Sigma)^{4*}$, we have 
$$C_{\rho}(x, y, u, v) > 1.$$
\end{enumerate}
\end{lem}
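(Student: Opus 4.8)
The plan is to obtain (1)--(4) directly from the FGW formula together with the basic structure of Anosov limit maps, and then to reduce (5) to a statement about the cyclic position of four points on a projective line, which is where $(1,1,2)$-hyperconvexity does the real work.

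For (1)--(4): by Lemma \ref{lem_cr_FGW}, for any choice of lifts,
\[
C_\rho(x,y,u,v)=\frac{\zeta^{n-1}(x)(\zeta^1(u))\,\zeta^{n-1}(y)(\zeta^1(v))}{\zeta^{n-1}(x)(\zeta^1(v))\,\zeta^{n-1}(y)(\zeta^1(u))}.
\]
Since $\rho$ is projective Anosov, the limit maps satisfy the nesting $\zeta^1(z)\subset\zeta^{n-1}(z)$ and the transversality relation $\zeta^{n-1}(a)(\zeta^1(b))=0\iff a=b$, and these two facts also force $\zeta^1$ and $\zeta^{n-1}$ to be injective. Property (1) follows because on the natural domain of $C_\rho$ the denominator never vanishes, while the numerator vanishes exactly when $x=u$ or $y=v$. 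Property (2) is Lemma \ref{lem_cr_FGW}(1) combined with injectivity of $\zeta^{n-1}$ and $\zeta^1$; property (3) is Lemma \ref{lem_cr_FGW}(2) applied with $\omega''=\zeta^1(v')$; property (4) is Lemma \ref{lem_cr_FGW}(3). (These four use only that $\rho$ is projective Anosov.)

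For (5), let $\ell$ be the projective line spanned by $\zeta^1(u)$ and $\zeta^1(v)$. By the definition of the FGW cross ratio, $C_\rho(x,y,u,v)$ equals the classical cross ratio on $\ell\cong\bbR\bbP^1$ of the four points $\psi(x),\psi(y),\zeta^1(u),\zeta^1(v)$, where $\psi(z):=\zeta^{n-1}(z)\cap\ell$. The map $\psi$ is defined and continuous on all of $\partial\pi_1\Sigma$ --- it could fail only if $\ell\subset\zeta^{n-1}(z)$, i.e. $u=z=v$ --- and the nesting $\zeta^1(z)\subset\zeta^{n-1}(z)$ yields $\psi(u)=\zeta^1(u)$, $\psi(v)=\zeta^1(v)$; the four points are pairwise distinct because $x,y,u,v$ are. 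As $\rho$ is conjugate into $\PGL(n,\bbR)$, the limit maps may be taken with values in $\bbP(\bbR^n)$ and $\bbP(\bbR^{n*})$, so all four points lie on $\ell\cong\bbR\bbP^1$. Thus (5) is equivalent to: when $(x,v,u,y)$ is cyclically ordered, the four points occur in the cyclic order $\big(\zeta^1(u),\psi(y),\psi(x),\zeta^1(v)\big)$ on $\ell$; a one-line computation of the classical cross ratio of four points in this configuration gives a value in $(1,\infty)$. (It is exactly the cyclic-order hypothesis that selects this order over $\big(\zeta^1(u),\psi(x),\psi(y),\zeta^1(v)\big)$, which would give $C_\rho\in(0,1)$.)

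The order statement is the heart of the matter, and here I would invoke the Pozzetti--Sambarino--Wienhard analysis of $(1,1,2)$-hyperconvex limit sets. Treating $\zeta^1$ as a convex curve into $\bbP(\bbR^n)$ with osculating hyperplanes $\zeta^{n-1}(z)$ --- working along an arc, after filling in the intervals complementary to $\partial\pi_1\Sigma$ --- I would establish: (i) $\zeta^1$ meets $\ell$ only at $\zeta^1(u)$ and $\zeta^1(v)$, which already follows from the $(1,1,2)$-hyperconvex condition $(\zeta^1(u)\oplus\zeta^1(v))\cap\zeta^{n-2}(w)=\{0\}$ for $w\neq u,v$ together with $\zeta^1(w)\subset\zeta^{n-2}(w)$; and (ii) on each of the two arcs of $\partial\pi_1\Sigma$ determined by $\{u,v\}$, the map $\psi$ is injective and sends the arc onto the sub-arc of $\ell$ on the side of $\{\zeta^1(u),\zeta^1(v)\}$ away from which the curve bends, hence is a monotone homeomorphism with endpoints $\zeta^1(u)$ and $\zeta^1(v)$. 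Since $(x,v,u,y)$ being cyclically ordered places $x,y$ on the arc $A$ between $u$ and $v$ with $u,y,x,v$ occurring in this order along $A$, the monotone homeomorphism $\psi|_A$ produces exactly the cyclic order $\big(\zeta^1(u),\psi(y),\psi(x),\zeta^1(v)\big)$, finishing (5); for Hitchin $\rho$, $\zeta^1$ is Labourie's Frenet curve and (i)--(ii) are classical, so the argument specializes to Vlamis--Yarmola's. I expect the main obstacle to be (ii) for general $(1,1,2)$-hyperconvex $\rho$ --- the injectivity of $\psi$ on a single arc and the ``which side'' statement --- which must be extracted from PSW's weaker-than-Frenet description of the limit curve. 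Finally, a soft connectedness argument (continuity of $C_\rho$ together with properties (1), (2), (4)) cannot replace this step, since $\partial\pi_1\Sigma$ is a Cantor set and the space of cyclically ordered quadruples is totally disconnected.
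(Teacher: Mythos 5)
Your treatment of (1)--(4) is fine and coincides with the paper's: everything follows from the explicit FGW formula $\bar\varphi(\bar\omega)\bar\varphi'(\bar\omega')/\bar\varphi(\bar\omega')\bar\varphi'(\bar\omega)$ together with transversality, the containment $\zeta^1(z)\subset\zeta^{n-1}(z)$, and injectivity of the limit maps; the paper disposes of these in one line and your expansion is correct.

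For (5), however, you have a genuine gap, and you say so yourself. The paper's proof of (5) is a single appeal to the fact that hyperconvex Anosov representations are \emph{positive}, i.e.\ precisely the statement that the cross ratio of a cyclically ordered quadruple exceeds $1$. Your reduction is a correct unpacking of what that positivity means: $C_\rho(x,y,u,v)$ is the classical cross ratio on $\ell=(\zeta^1(u)\zeta^1(v))$ of $\psi(x),\psi(y),\zeta^1(u),\zeta^1(v)$ with $\psi(z)=\zeta^{n-1}(z)\cap\ell$, the map $\psi$ is well defined off $\{u\}\cap\{v\}$ by transversality, $\psi(u)=\zeta^1(u)$ and $\psi(v)=\zeta^1(v)$ by the flag compatibility, and your step (i) does follow from $(1,1,2)$-hyperconvexity. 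But the conclusion then rests entirely on your step (ii) --- that $\psi$ restricted to the arc of $\partial\pi_1\Sigma$ containing $x,y$ is a monotone injection into one of the two arcs of $\ell$ bounded by $\zeta^1(u),\zeta^1(v)$ --- and you explicitly leave this unestablished (``I would establish\dots'', ``the main obstacle''). That step \emph{is} statement (5): without it you have only reformulated the claim, not proved it. Your closing observation that a connectedness argument is unavailable because $\partial\pi_1\Sigma$ is a Cantor set only sharpens the point that something substantive is missing. To close the gap you should either invoke the positivity property directly, as the paper does, or actually derive the monotonicity of $\psi$ from the $C^1$/local-conformality structure of the limit curve of a $(1,1,2)$-hyperconvex representation (Pozzetti--Sambarino--Wienhard, Zhang--Zimmer); as written, the proposal proves (1)--(4) but only reduces (5) to an equivalent unproved assertion.
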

\begin{proof}
(1)-(4) can be checked directly by definition and Lemma \ref{lem_cr_FGW}.
(5) follows from the fact that hyperconvex Anosov representations are positive.
\end{proof}

\begin{lem}
If $\{\rho_z\}$ is a holomorphic family of projective Anosov representations, then for any fixed four points $x,y,u,v \in \partial\pi_1\Sigma$, the complex cross ratio $C_{\rho_z}(x,y,u,v)$ is holomorphic in $z$.
\end{lem}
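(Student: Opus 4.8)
The strategy is to combine the closed formula of Lemma~\ref{lem_cr_FGW} with the holomorphic dependence of the limit maps of a projective Anosov representation on the representation; since $x,y,u,v$ are fixed, it is enough to treat the four limit values one at a time. Fix a parameter $z_0$ and work near it. Write $\varphi_z=\zeta^{n-1}_{\rho_z}(x)$, $\varphi'_z=\zeta^{n-1}_{\rho_z}(y)\in\bbP(\bbC^{n*})$ and $\omega_z=\zeta^1_{\rho_z}(u)$, $\omega'_z=\zeta^1_{\rho_z}(v)\in\bbP(\bbC^{n})$, so that by definition and Lemma~\ref{lem_cr_FGW},
\[
C_{\rho_z}(x,y,u,v)=[\varphi_z,\varphi'_z,\omega_z,\omega'_z]_{FGW}=\frac{\bar\varphi_z(\bar\omega_z)\,\bar\varphi'_z(\bar\omega'_z)}{\bar\varphi_z(\bar\omega'_z)\,\bar\varphi'_z(\bar\omega_z)}
\]
for any choice of lifts $\bar\varphi_z,\bar\varphi'_z\in\bbC^{n*}\setminus\{0\}$ and $\bar\omega_z,\bar\omega'_z\in\bbC^{n}\setminus\{0\}$. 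The right-hand side is unchanged under rescaling each lift, so it suffices to choose the four lifts to depend holomorphically on $z$ near $z_0$: then numerator and denominator are holomorphic in $z$, and the denominator $\zeta^{n-1}_{\rho_z}(x)\!\left(\zeta^1_{\rho_z}(v)\right)\cdot\zeta^{n-1}_{\rho_z}(y)\!\left(\zeta^1_{\rho_z}(u)\right)$ is nowhere zero because the Anosov transversality $\zeta^1_{\rho_z}(a)\oplus\zeta^{n-1}_{\rho_z}(b)=\bbC^n$ for $a\neq b$ forces each of its two factors to be nonzero (here $v\neq x$ and $u\neq y$, these being among the defining conditions of $(\partial\pi_1\Sigma)^{4*}$). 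Thus $C_{\rho_z}(x,y,u,v)$ is holomorphic near $z_0$, and since $z_0$ is arbitrary the lemma follows, modulo the existence of local holomorphic lifts of $z\mapsto\zeta^1_{\rho_z}(p)$ and $z\mapsto\zeta^{n-1}_{\rho_z}(p)$ for a fixed $p\in\partial\pi_1\Sigma$.

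To build these lifts, first take $p=\gamma^+$, the attracting fixed point of some $\gamma\in\pi_1\Sigma\setminus\{1\}$. Since $\rho_z$ is projective Anosov, $\rho_z(\gamma)$ is proximal and $\zeta^1_{\rho_z}(\gamma^+)$ is its attracting eigenline, associated to a simple eigenvalue isolated in modulus from the rest of the spectrum; hence, for a small circle $C$ around that eigenvalue (chosen uniformly for $z$ near $z_0$), the rank-one Riesz projection $P_z=\tfrac{1}{2\pi i}\oint_C(\zeta-\rho_z(\gamma))^{-1}\,d\zeta$ is holomorphic in $z$. Picking $e_0\in\bbC^n$ with $P_{z_0}e_0\neq0$, the vector $P_z e_0$ is a holomorphic, nowhere-vanishing lift of $\zeta^1_{\rho_z}(\gamma^+)$ near $z_0$. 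The same argument applied to the contragredient family $\rho_z^{\ast}=(\rho_z(\cdot)^{-1})^{T}$, which is again projective Anosov and has $\zeta^1_{\rho_z^{\ast}}=\zeta^{n-1}_{\rho_z}$ as maps into $\bbP(\bbC^{n*})$, produces a local holomorphic lift of $z\mapsto\zeta^{n-1}_{\rho_z}(\gamma^+)$; replacing $\gamma$ by $\gamma^{-1}$ handles repelling fixed points.

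For an arbitrary $p\in\partial\pi_1\Sigma$ I would pass to the limit. Since $\pi_1\Sigma$ is a non-elementary (free) hyperbolic group, the attracting fixed points of its nontrivial elements are dense in $\partial\pi_1\Sigma$, so choose $\gamma_k$ with $\gamma_k^+\to p$. By structural stability of Anosov representations the assignment $\rho\mapsto\zeta^1_\rho$ is continuous in the uniform topology on maps $\partial\pi_1\Sigma\to\bbP(\bbC^n)$ (\cite{BCLS}, \cite{Lab06}), and this together with continuity of each $\zeta^1_{\rho_z}$ makes $(z,\xi)\mapsto\zeta^1_{\rho_z}(\xi)$ jointly continuous, so $\zeta^1_{\rho_z}(\gamma_k^+)\to\zeta^1_{\rho_z}(p)$ uniformly for $z$ in a compact neighborhood of $z_0$. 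Reading the holomorphic lifts from the previous paragraph in a fixed affine chart of $\bbP(\bbC^n)$ around $\zeta^1_{\rho_{z_0}}(p)$, we obtain, for $k$ large and $z$ near $z_0$, a sequence of chart-valued holomorphic maps converging locally uniformly to $z\mapsto\zeta^1_{\rho_z}(p)$; by the Weierstrass convergence theorem the limit is holomorphic, which furnishes the required local holomorphic lift, and likewise for $\zeta^{n-1}_{\rho_z}$. (Alternatively, one may simply invoke that limit maps of Anosov representations depend analytically on the representation, cf. \cite{BCLS}, \cite{PSW}, together with the fact noted above that at fixed points the dependence is genuinely holomorphic rather than merely real-analytic.)

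The only step that really requires care is this last one — upgrading holomorphic dependence at the dense set of fixed points to holomorphic dependence at every boundary point — and the essential input there is the local-uniform (in the parameter) continuity of the limit map, i.e.\ the structural stability of Anosov representations. The remaining ingredients, the spectral-projection construction and the manipulation of the FGW formula, are routine.
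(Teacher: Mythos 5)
Your argument is correct, and at the top level it follows the same reduction as the paper: holomorphicity of $z\mapsto C_{\rho_z}(x,y,u,v)$ is deduced from holomorphic dependence of the limit maps $\zeta^1_{\rho_z}$, $\zeta^{n-1}_{\rho_z}$ on the parameter. The difference is that the paper disposes of the key input with a citation (\cite{BCLS}, Theorem~6.1; \cite{BPS}, Section~6), whereas you open up that black box: you reprove holomorphic dependence at attracting fixed points via Riesz spectral projections for the proximal elements $\rho_z(\gamma)$ (and their contragredients for $\zeta^{n-1}$), and then upgrade to arbitrary boundary points by density of fixed points, joint continuity of $(z,\xi)\mapsto\zeta^1_{\rho_z}(\xi)$ coming from structural stability, and the Weierstrass convergence theorem in a chart. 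You also make explicit two points the paper leaves implicit: that one needs local holomorphic lifts of the projective limit values to apply the FGW formula, and that the denominator $\bar\varphi_z(\bar\omega'_z)\,\bar\varphi'_z(\bar\omega_z)$ is nonvanishing by Anosov transversality. Your self-contained route costs more work but buys independence from the external analyticity theorem; the paper's citation is shorter and gives analyticity of the full limit map at once rather than pointwise. Either way the lemma stands; the only care needed in your version, which you in effect handle, is that the chart-valued holomorphic approximants at $\gamma_k^+$ are all defined on a common neighborhood of $z_0$ for large $k$ --- this follows from the uniform (in $z$) convergence $\zeta^1_{\rho_z}(\gamma_k^+)\to\zeta^1_{\rho_z}(p)$ rather than from the (possibly shrinking) domains of the individual Riesz constructions.
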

\begin{proof}
The lemma follows from the theorem that the limit maps $\zeta^1_{\rho_z}$ and $\zeta^{n-1}_{\rho_z}$ are holomorphic in $z$ (\cite{BCLS}, Theorem 6.1; see also \cite{BPS} section 6).
\end{proof}

\subsection{Complex length and complex period} \label{subsec_cl}
Given a projective Anosov representation $\rho: \pi_1\Sigma \to \PGL(n,\bbC)$, the {\it complex length} of any nontrivial element $\gamma \in \pi_1\Sigma$ is defined, up to $2\pi i$, as
$$\ell_{\rho}(\gamma) = \log \left(\frac{\lambda_1(\rho(\gamma))}{\lambda_{n}(\rho(\gamma))}\right).$$
The {\it complex period} of a nontrivial element $\gamma \in \pi_1\Sigma$ is defined, up to $2\pi i$, as $$P_{\rho}(\gamma) = \log C_{\rho} (\gamma^+,\gamma^-,x,\gamma x )$$ where $x$ is any point in $\partial\pi_1\hat{\Sigma} \setminus \{\gamma^+,\gamma^-\}$.

If $\rho$ is a real projective Anosov representation, then the complex length and complex period become the usual notion of lengths and periods. Moreover, similar to the real case proved by Labourie (\cite{Lab07}, Proposition 5.8), the complex period of a nontrivial element equals its complex length.
\begin{lem}
For any nontrivial element $\gamma \in \pi_1\Sigma$, $P_{\rho}(\gamma) = \ell_{\rho}(\gamma)$.
\end{lem}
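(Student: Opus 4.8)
The plan is to evaluate $C_\rho(\gamma^+,\gamma^-,x,\gamma x)$ directly from the explicit FGW formula in Lemma \ref{lem_cr_FGW} together with the $\rho$-equivariance of the limit maps, to show that it equals $\lambda_1(\rho(\gamma))/\lambda_n(\rho(\gamma))$, and then to conclude by taking logarithms (both $P_\rho(\gamma)$ and $\ell_\rho(\gamma)$ being defined only modulo $2\pi i$). This is the complexification of the real-case identity of Labourie quoted above, and the argument is the same.

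Step 1 is to pin down what the limit maps do at the fixed points of $\gamma$. Write $A=\rho(\gamma)$. Projective Anosov-ness makes $A$ proximal, so its eigenvalue $\lambda_1$ of largest modulus is simple, with eigenline $\ell^+$, and its eigenvalue $\lambda_n$ of smallest modulus is simple, with eigenline $\ell^-$. Equivariance makes $\zeta^1(\gamma^{\pm})$ an $A$-invariant line, and since $\gamma^m x\to\gamma^{\pm}$ for generic $x$ while $A^m$ attracts generic lines to $\ell^+$ and repels from $\ell^-$, we get $\zeta^1(\gamma^+)=\ell^+$ and $\zeta^1(\gamma^-)=\ell^-$. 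The transversality relations $\zeta^1(\gamma^+)\oplus\zeta^{n-1}(\gamma^-)=\bbC^n$ and $\zeta^1(\gamma^-)\oplus\zeta^{n-1}(\gamma^+)=\bbC^n$ then force $\zeta^{n-1}(\gamma^-)$ to be the unique $A$-invariant hyperplane avoiding $\ell^+$, and $\zeta^{n-1}(\gamma^+)$ to be the unique $A$-invariant hyperplane avoiding $\ell^-$. Choose $\varphi^{\pm}\in\bbC^{n*}$ with $\ker\varphi^+=\zeta^{n-1}(\gamma^+)$ and $\ker\varphi^-=\zeta^{n-1}(\gamma^-)$; evaluating on a vector of $\ell^-$ (resp. $\ell^+$) and using $A$-invariance of these kernels gives $\varphi^+\circ A=\lambda_n\,\varphi^+$ and $\varphi^-\circ A=\lambda_1\,\varphi^-$.

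Step 2 is the computation. Fix any $x\in\partial\pi_1\hat\Sigma\setminus\{\gamma^+,\gamma^-\}$ and let $\bar\omega$ be a lift of $\zeta^1(x)$; by equivariance $A\bar\omega$ is a lift of $\zeta^1(\gamma x)=A\,\zeta^1(x)$. Transversality ($x\ne\gamma^{\pm}$) gives $\varphi^+(\bar\omega)\ne 0$ and $\varphi^-(\bar\omega)\ne 0$, and $\gamma x\ne x$ (the only boundary fixed points of $\gamma$ are $\gamma^{\pm}$) gives $\zeta^1(\gamma x)\ne\zeta^1(x)$, so the cross ratio is defined. Plugging into Lemma \ref{lem_cr_FGW} and using the eigenrelations,
$$C_\rho(\gamma^+,\gamma^-,x,\gamma x)=\frac{\varphi^+(\bar\omega)\,\varphi^-(A\bar\omega)}{\varphi^+(A\bar\omega)\,\varphi^-(\bar\omega)}=\frac{\varphi^+(\bar\omega)\,\lambda_1\varphi^-(\bar\omega)}{\lambda_n\varphi^+(\bar\omega)\,\varphi^-(\bar\omega)}=\frac{\lambda_1}{\lambda_n},$$
which is in particular independent of $x$, as the definition of $P_\rho$ requires. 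Taking a logarithm, $P_\rho(\gamma)=\log\bigl(\lambda_1(\rho(\gamma))/\lambda_n(\rho(\gamma))\bigr)=\ell_\rho(\gamma)$ mod $2\pi i$.

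There is essentially nothing hard here; the one point that needs care is the non-diagonalizable case, where "eigenvalue of extremal modulus'' and the associated invariant hyperplanes must be read via generalized eigenspaces. But proximality of $\rho(\gamma)$, automatic for projective Anosov representations, guarantees $\lambda_1$ and $\lambda_n$ are simple, so $\ell^{\pm}$ really are eigenlines and $\zeta^{n-1}(\gamma^{\mp})$ really are complementary invariant hyperplanes, which is all Step~1 uses. So I do not expect a genuine obstacle.
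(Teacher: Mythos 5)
Your proof is correct, and it is precisely the ``straightforward calculation'' the paper leaves to the reader: identify $\zeta^1(\gamma^{\pm})$ and $\zeta^{n-1}(\gamma^{\pm})$ with the extremal eigenline/invariant-hyperplane data of $\rho(\gamma)$, plug into the FGW formula, and let the eigenrelations $\varphi^+\circ A=\lambda_n\varphi^+$, $\varphi^-\circ A=\lambda_1\varphi^-$ collapse the cross ratio to $\lambda_1/\lambda_n$. The transversality checks ensuring the cross ratio is defined and the $x$-independence you note are exactly the points worth recording.
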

\begin{proof}
Straightforward calculation.
\end{proof}

\begin{lem}
If $\{\rho_z\}$ is a holomorphic family of projective Anosov representations, then for a fixed $\gamma \in \pi_1\Sigma$, the complex length function $\ell_{\rho_z}(\gamma)$ is holomorphic in $z$.
\end{lem}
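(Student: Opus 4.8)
The plan is to reduce the assertion to the already-established holomorphic dependence of the complex cross ratio on the parameter. Fix once and for all a point $x \in \partial\pi_1\Sigma \setminus \{\gamma^+,\gamma^-\}$. By the preceding lemma ($P_\rho = \ell_\rho$) we have
\[
\ell_{\rho_z}(\gamma) = P_{\rho_z}(\gamma) = \log C_{\rho_z}(\gamma^+,\gamma^-,x,\gamma x)
\]
for every $z$. The key observation is that all four arguments of the cross ratio are independent of $z$: the attracting and repelling fixed points $\gamma^\pm$ depend only on the action of $\gamma$ on the boundary $\partial\pi_1\Sigma$ of the \emph{group} $\pi_1\Sigma$, with no reference to the representation, and $x$, $\gamma x$ are likewise fixed. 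Moreover the quadruple $(\gamma^+,\gamma^-,x,\gamma x)$ consists of pairwise distinct points — for instance $x\neq\gamma x$ because $x$ is not a fixed point of $\gamma$ — so it lies in $(\partial\pi_1\Sigma)^{4*}$.

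Next I would invoke the lemma above asserting that, for a holomorphic family of projective Anosov representations and any fixed quadruple of boundary points, the complex cross ratio is holomorphic in the parameter; this makes $z \mapsto C_{\rho_z}(\gamma^+,\gamma^-,x,\gamma x)$ holomorphic. The straightforward computation behind $P_\rho = \ell_\rho$ identifies this quantity with $\lambda_1(\rho_z(\gamma))/\lambda_n(\rho_z(\gamma))$, which is nowhere zero; alternatively, nonvanishing follows at once from transversality of the limit maps $\zeta^1,\zeta^{n-1}$ together with the formula for the FGW-cross ratio. A nowhere-vanishing holomorphic function admits a single-valued holomorphic branch of its logarithm on every simply connected subset of the parameter domain, so $\ell_{\rho_z}(\gamma)$ — which is in any case only defined modulo $2\pi i$ — is holomorphic in $z$ in the sense that, once a branch is pinned down at one parameter value, it continues holomorphically along every path.

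For completeness I note a self-contained alternative avoiding the cross ratio: after shrinking the parameter domain one may lift the holomorphic family $z\mapsto\rho_z(\gamma)\in\PGL(n,\bbC)$ to a holomorphic family $g_z\in\text{GL}(n,\bbC)$. Since $\rho_z$ is projective Anosov, $g_z$ and its inverse transpose are both proximal, so $\lambda_1(g_z)$ and $\lambda_n(g_z)$ are simple eigenvalues strictly separated in modulus from the rest of the spectrum; analytic perturbation theory of simple eigenvalues (Kato) then gives that $z\mapsto\lambda_1(g_z)$ and $z\mapsto\lambda_n(g_z)$ are holomorphic and nonzero, and their ratio — which is independent of the chosen lift — equals $\exp\ell_{\rho_z}(\gamma)$.

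There is no genuine analytic obstacle here. The only point demanding care is the bookkeeping of which objects depend on $z$ (the fixed points $\gamma^\pm$ do not), and the routine remark that the multivalued $\log$ possesses single-valued holomorphic branches over simply connected parameter regions; the one external input, holomorphicity of the limit maps in the parameter, has already been cited in the preceding lemmas.
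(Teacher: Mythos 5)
Your argument is correct, but your primary route is not the one the paper takes. The paper's proof is a one-liner: the eigenvalues $\lambda_1$ and $\lambda_n$ of $\rho_z(\gamma)$ vary holomorphically with $z$, hence so does $\log(\lambda_1/\lambda_n)$. That is precisely your ``self-contained alternative,'' and in fact your version is more careful than the paper's, since you supply the reason the eigenvalues are holomorphic at all (eigenvalues of a holomorphic matrix family are in general only branches of algebraic functions; it is the simplicity of $\lambda_1$ and $\lambda_n$, coming from proximality of $\rho_z(\gamma)$ and of its dual, that lets Kato-type perturbation theory apply). Your main route --- passing through $P_\rho=\ell_\rho$ and the holomorphicity of the cross ratio $C_{\rho_z}$ at a fixed, $z$-independent quadruple of boundary points --- is also valid, and your bookkeeping (the fixed points $\gamma^{\pm}$ live in the abstract boundary $\partial\pi_1\Sigma$ and do not move with $z$; the cross ratio is nonvanishing by transversality of the limit maps, so $\log$ has local holomorphic branches and the statement is anyway only mod $2\pi i$) is sound. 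The trade-off is that this route leans on the much heavier input of holomorphic dependence of the limit maps $\zeta^1_{\rho_z},\zeta^{n-1}_{\rho_z}$ on $z$ (\cite{BCLS}), whereas the eigenvalue argument needs only finite-dimensional spectral perturbation theory; for this lemma the direct eigenvalue argument is the cleaner one, and I would present it as the main proof rather than the afterthought.
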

\begin{proof}
Since the eigenvalues $\lambda_1$ and $\lambda_{n}$ of the matrix $\rho_z(\gamma) \in \PGL(n,\bbC)$ varies holomorphically with $z$, $\ell_{\rho_z}(\gamma)$ is holomorphic in $z$.
\end{proof}

\subsection{Orthogeodesics and double cosets}
In this subsection, we show that orthogeodesics on a compact hyperbolic surface with boundary $\Sigma$ correspond to certain double cosets of $\pi_1 \Sigma$. 

Let $\alpha_1,\cdots, \alpha_k \in \pi_1\Sigma$ represent the free homotopy classes of boundary geodesics $a_1,\cdots, a_k$, respectively. Denote $H_j \defeq \langle \alpha_j \rangle$, the subgroup of $\pi_1\Sigma$ generated by $\alpha_j$. Let $\mathcal{DC}(\Sigma)$ denote the set of double cosets of the form $H_pwH_q$ where $w \in \pi_1\Sigma$ is not in $H_p \cap H_q$, for $p,q =1,\cdots,k$. Denote by $[w]_{p,q}$ the class of the double coset $H_p w H_q$.

\begin{prop}[\cite{He}]
	There is a bijection $$\Phi : \mathcal{DC}(\Sigma) \to \{\text{Orthogeodesics on } \Sigma\}.$$
\end{prop}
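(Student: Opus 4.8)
The plan is to set up the bijection $\Phi$ via lifts to the universal cover and then verify it is well-defined, injective, and surjective. Fix once and for all a hyperbolic metric on $\Sigma$, realize $\pi_1\Sigma$ as a convex cocompact subgroup of $\PSL(2,\bbR)$, and let $\widetilde{\Sigma} \to \Sigma$ be the universal cover, identified with the convex core hull of the limit set in $\bbH^2$. Each boundary geodesic $a_q$ lifts to a collection of complete geodesics in $\bbH^2$ (the boundary geodesics of $\widetilde{\Sigma}$), and the stabilizer of a chosen lift $\widetilde{a}_q$ is exactly $H_q = \langle \alpha_q\rangle$; the full set of lifts of $a_q$ is then in bijection with the cosets $\pi_1\Sigma / H_q$ via $gH_q \mapsto g\widetilde{a}_q$.

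Next I would describe the map. An orthogeodesic on $\Sigma$ is a geodesic arc meeting $\partial\Sigma$ perpendicularly at both endpoints (possibly at the same component). Lifting such an arc starting on the fixed lift $\widetilde{a}_p$, its other endpoint lies on some lift $w\widetilde{a}_q$ of some boundary component $a_q$, and the common perpendicular between $\widetilde{a}_p$ and $w\widetilde{a}_q$ projects to the orthogeodesic; conversely, for $w \notin H_p \cap H_q$ the geodesics $\widetilde{a}_p$ and $w\widetilde{a}_q$ are disjoint with disjoint closures in $\overline{\bbH^2}$ (using that distinct lifts of boundary geodesics of a convex cocompact surface are disjoint and non-asymptotic), so they admit a unique common perpendicular segment in $\bbH^2$, whose image in $\Sigma$ is an orthogeodesic. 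I would define $\Phi([w]_{p,q})$ to be this orthogeodesic. To see this is well-defined, note that replacing $w$ by $\alpha_p^m w \alpha_q^n$ replaces the pair $(\widetilde{a}_p, w\widetilde{a}_q)$ by its image under $\alpha_p^m \in \pi_1\Sigma$ (after using $\alpha_q^n$ stabilizes $\widetilde{a}_q$), which is a deck transformation, so the common perpendicular descends to the same arc in $\Sigma$.

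For injectivity, suppose $\Phi([w]_{p,q}) = \Phi([w']_{p',q'})$. Lifting the common orthogeodesic so that it starts on $\widetilde{a}_p$, its terminal endpoint determines a unique lift of the terminal boundary component; comparing the two descriptions forces $p' = p$ (the initial component is intrinsic) and shows $w'\widetilde{a}_{q'}$ and $w\widetilde{a}_q$ are the same lift possibly after precomposing the lift of the arc by an element of $H_p$, whence $q' = q$ and $w' \in H_p w H_q$. Surjectivity is the content of the previous paragraph: every orthogeodesic lifts, and the lift meeting $\widetilde{a}_p$ terminates on some $w\widetilde{a}_q$ with $w \notin H_p \cap H_q$ (if $w \in H_p \cap H_q$ then $\widetilde{a}_p = w\widetilde{a}_q$ and there is no genuine perpendicular arc), giving a preimage $[w]_{p,q}$.

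The main obstacle I expect is the careful bookkeeping of \emph{which} lift of the orthogeodesic to take and ensuring the double coset class is genuinely independent of all choices --- in particular, dealing with orthogeodesics from a component to itself, where one must check that the left $H_p$-action and the right $H_q$-action with $p = q$ do not introduce extra identifications beyond $H_p w H_p$, and handling the degenerate cases $w \in H_p \cap H_q$ (which for $p \neq q$ is trivial, but for $p = q$ excludes exactly the powers of $\alpha_p$, corresponding to the absence of an orthogeodesic "from $a_p$ to itself along $a_p$"). Since this proposition is attributed to \cite{He}, I would keep the argument brief and cite the surface-group case, indicating only the modifications needed for surfaces with boundary, namely that convex cocompactness guarantees the disjointness and non-asymptoticity of distinct boundary lifts that makes the common perpendicular exist and be unique.
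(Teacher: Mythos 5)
Your construction is the standard one and is exactly the argument behind the cited source: the paper itself gives no proof of this proposition (it simply cites \cite{He}), and the lift-to-the-universal-cover / common-perpendicular scheme you outline --- lifts of $a_q$ correspond to cosets $\pi_1\Sigma/H_q$, the $H_p$-ambiguity in normalizing the initial lift to $\widetilde{a}_p$ produces exactly the double coset --- is the right one. Well-definedness and surjectivity are fine as you present them.

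The one step that does not hold as written is injectivity, specifically the claim that ``the initial component is intrinsic.'' An orthogeodesic, as defined in the paper, is an unoriented properly immersed arc, so it has no preferred initial endpoint. If $w \notin H_p$, the classes $[w]_{p,q}$ and $[w^{-1}]_{q,p}$ are distinct elements of $\mathcal{DC}(\Sigma)$ (they lie in different blocks of the disjoint union when $p \neq q$, and are generically distinct even when $p = q$), yet applying the deck transformation $w$ carries the common perpendicular of $\widetilde{a}_q$ and $w^{-1}\widetilde{a}_p$ to that of $w\widetilde{a}_q$ and $\widetilde{a}_p$, so both classes map to the same unoriented arc. Hence the $\Phi$ you define is generically two-to-one onto unoriented orthogeodesics; the bijection only holds for \emph{oriented} orthogeodesics (equivalently, orthogeodesics with a marked foot on $\partial\Sigma$), which is in any case the convention the identity needs, since each unoriented orthogeodesic contributes one boundary interval at each of its two feet. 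Relatedly, your parenthetical in the surjectivity step --- ``if $w \in H_p \cap H_q$ then $\widetilde{a}_p = w\widetilde{a}_q$'' --- is only correct when $p = q$; for $p \neq q$ one has $H_p \cap H_q = \{e\}$ while $\widetilde{a}_p \neq \widetilde{a}_q$, and the double coset $H_pH_q$ does carry a genuine common perpendicular, so the degenerate case you need to discard is only $p = q$ with $w \in H_p$.
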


Let $S$ be a symmetric generating set of $\pi_1\Sigma$. Choose an ordering on $S$. This determines a unique reduced lexicographically first (RedLex) representative $w$ of each double coset $H_p w H_q$. Let $\mathscr{L}_{p,q}$ be the set of nontrivial RedLex double coset representatives for fixed $p, q$. Then the set $$\mathscr{L} \defeq \coprod\limits_{1 \le p, q \le k} \mathscr{L}_{p,q}$$ is naturally in bijection with the set of orthogeodesics on $M$.

\section{Identities for real $(1,1,2)$-hyperconvex Anosov representations}
Let $\Sigma$ be a connected compact oriented surface with boundary whose double $\hat{\Sigma}$ has genus at least $2$. In this section, we generalize Basmajian's identity for hyperbolic manifolds to $(1,1,2)$-hyperconvex Anosov representations $\rho : \pi_1\Sigma \to \PGL(n,\bbR)$. If $\rho$ is Hithcin, we recover Vlamis-Yarmola's identity (\cite{Vlamis}).

\begin{thm} \label{thm_realhigherBas}
	Let $\Sigma$ be a connected compact oriented hyperbolic surface with geodesic boundaries $a_1,\cdots, a_k$ whose double has genus at least $2$. Let $\alpha_1,\cdots, \alpha_k \in \pi_1\Sigma$ represent the free homotopy classes of $a_1,\cdots, a_k$. If $\rho: \pi_1\Sigma \to \text{PGL}(n, \bbR)$ is $(1,1,2)$-hyperconvex, then we have
	\begin{equation} \label{eq_alg}
	\sum_{j=1}^k \ell_{\rho}(\rho(\alpha_j)) = \sum_{j,q =1}^k \sum_{w \in \mathcal{L}_{j,q}} \log C_{ \rho}(\alpha_j^+,\alpha_j^-;w\cdot \alpha_q^+,w \cdot \alpha_q^-) 
	\end{equation}
	where $\alpha_j^+,\alpha_j^-$ are the attracting and repelling fixed points of $\alpha_j$, respectively. Furthermore, if $\rho$ is Hitchin, it is Vlamis-Yarmola's identity.
\end{thm}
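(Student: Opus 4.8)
The plan is to transplant Basmajian's orthogeodesic decomposition onto the boundary circle $\partial\pi_1\Sigma\subset\partial\bbH^2$, with hyperbolic length and hyperbolic cross ratio replaced throughout by $\ell_\rho$ and $C_\rho$; this parallels Vlamis--Yarmola's argument \cite{Vlamis} for Hitchin representations. First I would fix an auxiliary hyperbolic metric on $\Sigma$ (the choice is immaterial, $\partial\pi_1\Sigma$ being canonical), so that $\pi_1\Sigma$ is a convex cocompact free Fuchsian group; since the double of $\Sigma$ has genus $\ge 2$, its limit set $\Lambda$ is a Cantor set properly contained in $\partial\bbH^2$, hence of zero Lebesgue measure. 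For each $j$ let $I_j$ be the open arc of $\partial\bbH^2\setminus\{\alpha_j^+,\alpha_j^-\}$ on the side of the axis of $\alpha_j$ containing $\Lambda\setminus\{\alpha_j^+,\alpha_j^-\}$; identifying $I_j$ with the inward normal line of that axis, $\langle\alpha_j\rangle$ acts on $I_j$ with a fundamental interval $D_j=[u_0,\alpha_j u_0)$, and Basmajian's orthogeodesic decomposition --- combined with the orthogeodesic--double-coset bijection of \cite{He} --- yields, for a suitable $u_0\in\Lambda\cap I_j$,
$$D_j \;=\; (\Lambda\cap D_j)\ \sqcup\ \coprod_{q=1}^{k}\ \coprod_{w\in\mathcal{L}_{j,q}} J_w ,$$
a disjoint union of the Lebesgue-null Cantor set $\Lambda\cap D_j$ with gap intervals $J_w$, where $J_w$ is the subarc of $I_j$ bounded by the endpoints $w\cdot\alpha_q^+,\,w\cdot\alpha_q^-$ of the lift $w\cdot(\text{axis of }\alpha_q)$.

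Next I would use $C_\rho$ to measure subarcs of $I_j$ with endpoints on $\Lambda$: put $f_j(v)\defeq\log C_\rho(\alpha_j^+,\alpha_j^-;u_0,v)$, so that Lemma~\ref{lem_pos}(3)--(4) gives $\log C_\rho(\alpha_j^+,\alpha_j^-;u,v)=f_j(v)-f_j(u)$ and Lemma~\ref{lem_pos}(5) makes $f_j$ strictly monotone along $I_j$; extending $f_j$ affinely across the gaps to all of $I_j$, let $\mu_j$ be the associated positive measure. Two facts are then immediate. First, combining the cocycle relation with the identity $\log C_\rho(\alpha_j^+,\alpha_j^-;v,\alpha_j v)=P_\rho(\alpha_j)=\ell_\rho(\alpha_j)$ --- the complex period, equal to the complex length and independent of $v$ --- shows $f_j(\alpha_j v)=f_j(v)+\ell_\rho(\alpha_j)$, whence $\mu_j(D_j)=\ell_\rho(\alpha_j)$. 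Second, by construction $\mu_j(J_w)=\log C_\rho(\alpha_j^+,\alpha_j^-;w\cdot\alpha_q^+,w\cdot\alpha_q^-)$ for $w\in\mathcal{L}_{j,q}$, which (with the orientation conventions of \cite{He} and Lemma~\ref{lem_pos}(5)) is exactly the positive term appearing in the statement. Hence, \emph{granting} $\mu_j(\Lambda\cap D_j)=0$, applying $\mu_j$ to the displayed decomposition and using countable additivity gives $\ell_\rho(\alpha_j)=\sum_{q}\sum_{w\in\mathcal{L}_{j,q}}\log C_\rho(\alpha_j^+,\alpha_j^-;w\cdot\alpha_q^+,w\cdot\alpha_q^-)$; summing over $j$ yields the claimed identity, and when $\rho$ is Hitchin $C_\rho$ is Labourie's cross ratio and $\ell_\rho$ the usual length, so the right-hand side is literally Vlamis--Yarmola's series.

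The one substantive point --- and the main obstacle --- is the vanishing $\mu_j(\Lambda\cap D_j)=0$, i.e.\ that the gaps exhaust $D_j$ in $\mu_j$-measure. Unwinding the FGW formula (Lemma~\ref{lem_cr_FGW}) gives $f_j(v)=\mathrm{const}+\log\bigl(\varphi'(\zeta^1(v))/\varphi(\zeta^1(v))\bigr)$ with $\varphi=\zeta^{n-1}(\alpha_j^+)$ and $\varphi'=\zeta^{n-1}(\alpha_j^-)$ fixed functionals, nonvanishing on $\zeta^1(v)$ by Anosov transversality, so that $f_j$ is as regular as the limit map $\zeta^1$. For a general projective Anosov representation $\zeta^1$ is only H\"older, $\mu_j$ may be singular, the gaps need not fill $D_j$, and the putative identity would collapse to a mere inequality. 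It is precisely the local conformality of $(1,1,2)$-hyperconvex representations --- concretely, the $C^1$-regularity (in particular the local Lipschitz property) of $\zeta^1$ established by Pozzetti--Sambarino--Wienhard \cite{PSW} --- that makes $f_j$ absolutely continuous, hence $\mu_j$ absolutely continuous with respect to Lebesgue measure on $I_j$; the classical Lebesgue-nullity of $\Lambda$ then forces $\mu_j(\Lambda\cap D_j)=0$. This is the exact analogue of the role played by the $C^1$ Hitchin limit curve in Vlamis--Yarmola's treatment, and carrying this regularity reduction through carefully is where the real work lies.
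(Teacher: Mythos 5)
Your architecture is the same as the paper's (which in turn follows Vlamis--Yarmola): decompose a fundamental interval for $\langle\alpha_j\rangle$ into the Cantor limit set plus gaps indexed by double cosets, measure everything with the monotone function $v\mapsto\log C_\rho(\alpha_j^+,\alpha_j^-;u_0,v)$, obtain $\ell_\rho(\alpha_j)$ for the total mass via the period-equals-length lemma, and the cross-ratio logarithms for the gaps via the cocycle relation of Lemma \ref{lem_cr_FGW}. You also correctly isolate the crux, namely that the Cantor part must have zero $\mu_j$-measure. The problem is the mechanism you propose for that step.

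You argue that $\zeta^1$ is $C^1$, in particular locally Lipschitz, hence $f_j$ is absolutely continuous on the arc $I_j\subset\partial\bbH^2$, hence $\mu_j\ll\lambda$ there, and you then invoke the classical Lebesgue-nullity of $\Lambda$ in $\partial\bbH^2$. But the regularity supplied by Pozzetti--Sambarino--Wienhard and Zhang--Zimmer \cite{ZhangZimmer} is a statement about the \emph{image}: $\zeta^1(\partial\pi_1\hat{\Sigma})$ is a $C^{1+\alpha}$ submanifold of $\bbP(\bbR^n)$, i.e.\ it admits a $C^1$ parametrization. It does not say that the map $\zeta^1$ is Lipschitz from $\partial\pi_1\Sigma$ equipped with the visual metric of your auxiliary hyperbolic structure; in general that map is only H\"older. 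Concretely, take $n=3$ and $\rho=\iota\circ\rho_1$ with $\rho_1$ a Fuchsian Schottky representation whose limit set has larger Hausdorff dimension than that of the auxiliary structure $\rho_0$: then $\zeta^1$ is the (smooth) Veronese embedding composed with the boundary conjugacy $\Lambda_{\rho_0}\to\Lambda_{\rho_1}$, which raises Hausdorff dimension and therefore cannot be Lipschitz. So $\mu_j$ need not be absolutely continuous with respect to Lebesgue measure on $\partial\bbH^2$, and $\lambda(\Lambda)=0$ does not yield $\mu_j(\Lambda\cap D_j)=0$. The input actually needed --- and what the paper's Proposition \ref{prop_mzero} supplies, following \cite{Vlamis} Section 3 --- is that $\zeta^1(\partial\pi_1\Sigma)$ has zero measure with respect to the arclength (pushforward Lebesgue) measure on the $C^{1+\alpha}$ limit curve in $\bbP(\bbR^n)$; since your own FGW unwinding shows $F_\rho$ factors as a smooth (hence Lipschitz on the relevant compact set) function of $\zeta^1$, that nullity is what forces $\lambda\bigl(f_j(\Lambda\cap D_j)\bigr)=0$. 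In short: the relevant null set lives in $\bbP(\bbR^n)$, not in $\partial\bbH^2$, and your reduction to the latter would fail. (Note also that this curve-level statement requires an ambient closed-surface representation, supplied in the Hitchin case by Labourie--McShane's extension theorem; this is an additional point your argument would need to address.)
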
 

The rest of this section is devoted to the proof of the theorem. The proof follows the same general idea as Basmajian's; namely, we decompose an interval (in $\bbR$) into a set of measure zero which corresponds to the limit set, plus a countable collection of disjoint intervals, one for each orthogeodesic (double coset), whose length is given by log of the corresponding cross ratio. In the context of Anosov representations, our proof adopts the same strategies as Vlamis-Yarmola's proof of Basmajian's identity for Hitchin representations.

\subsection{The limit set has measure zero}
Let $S$ be a closed hyperbolic surface and $\Sigma \subset S$ a connected incompressible subsurface. A hyperbolic structure on $S$ gives an identification of the boundary at infinity $\partial\pi_1S$ with $\bbS^1 = \partial_{\infty}\bbH^2$, under which $\partial\pi_1\Sigma$ has measure zero with respect to the Lebesgue measure on $\bbS^1$. In this subsection, we obtain an analogous result for the limit set associated to a real $(1,1,2)$-hyperconvex Anosov representation of $\pi_1S$.

If $\rho : \pi_1S \to \PGL(n,\bbR)$ is $(1,1,2)$-hyperconvex, then the limit set $\zeta^1(\partial\pi_1S)$ is a $C^{1+\alpha}$-submanifold of $\bbP(\bbR^n)$ (\cite{ZhangZimmer}, Theorem 1.1). Let $\eta_{\rho} : \bbS^1 \to \zeta^1(\partial\pi_1S)$ be a $C^1$-parametrization with $\alpha$-H\"older derivative and let $\lambda$ be the Lebesgue measure on $\bbS^1$. We define a measure $\mu_{\rho}$ on $\partial\pi_1S$ by setting
$$\mu_{\rho} = ((\zeta^1)^{-1}\circ \eta_{\rho})_* \lambda.$$

\begin{prop}\label{prop_mzero}
Let $S$ be a closed hyperbolic surface and $\Sigma \subset S$ a connected incompressible subsurface. Let $\rho$ be a real $(1,1,2)$-hyperconvex Anosov representation of $\pi_1S$. If $\mu_{\rho}$ is the pushforward of the Lebesgue measure on $\partial\pi_1S$, then viewing $\partial\pi_1\Sigma \subset \partial\pi_1S$, we have $$\mu_{\rho}(\partial\pi_1\Sigma) = 0.$$
\end{prop}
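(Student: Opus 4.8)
The plan is to reduce the statement to the classical fact that $\partial\pi_1\Sigma \subset \partial\pi_1 S$ has Lebesgue measure zero in $\mathbb{S}^1$, by showing that the measure $\mu_\rho$ is absolutely continuous with respect to the Lebesgue measure $\lambda$ on $\mathbb{S}^1$ when transported appropriately. More precisely, by definition $\mu_\rho = ((\zeta^1)^{-1}\circ\eta_\rho)_*\lambda$, so for a Borel set $A \subset \partial\pi_1 S$ we have $\mu_\rho(A) = \lambda\big((\eta_\rho^{-1}\circ\zeta^1)(A)\big) = \lambda\big(\eta_\rho^{-1}(\zeta^1(A))\big)$. Applying this with $A = \partial\pi_1\Sigma$, the claim $\mu_\rho(\partial\pi_1\Sigma) = 0$ becomes the assertion that $\eta_\rho^{-1}(\zeta^1(\partial\pi_1\Sigma))$ is $\lambda$-null in $\mathbb{S}^1$. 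So the real content is to relate the parametrization $\eta_\rho$ of the limit curve $\zeta^1(\partial\pi_1 S)$ to the topological circle $\partial\pi_1 S \cong \mathbb{S}^1$ coming from a hyperbolic structure on $S$.

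The first step is to record that, since $\rho$ is $(1,1,2)$-hyperconvex (hence projective Anosov), the limit map $\zeta^1 : \partial\pi_1 S \to \zeta^1(\partial\pi_1 S) \subset \mathbb{P}(\mathbb{R}^n)$ is a $\rho$-equivariant Hölder homeomorphism onto its image, and by the Zhang–Zimmer theorem quoted above the image is a $C^{1+\alpha}$ submanifold, so $\eta_\rho : \mathbb{S}^1 \to \zeta^1(\partial\pi_1 S)$ is a $C^1$ homeomorphism with $\alpha$-Hölder derivative. Composing, $h \defeq \eta_\rho^{-1}\circ\zeta^1 : \partial\pi_1 S \to \mathbb{S}^1$ is a homeomorphism of circles; if we fix a hyperbolic structure on $S$ and thereby identify $\partial\pi_1 S$ with $\mathbb{S}^1 = \partial_\infty\mathbb{H}^2$, then $h : \mathbb{S}^1 \to \mathbb{S}^1$ is just a homeomorphism of the circle. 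Under this identification $\partial\pi_1\Sigma$ is a Cantor subset of $\mathbb{S}^1$ of Lebesgue measure zero (the classical fact for incompressible subsurfaces of closed hyperbolic surfaces), and the goal is exactly $\lambda(h(\partial\pi_1\Sigma)) = 0$. A general circle homeomorphism need not preserve null sets, so the second — and crucial — step is to upgrade $h$ to something that does. The key point will be to show $h$ (or at least the relevant composition) is Lipschitz, or more precisely absolutely continuous, so that it carries $\lambda$-null sets to $\lambda$-null sets.

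The hard part, therefore, is the regularity of the comparison map $h$ between the two circle structures on $\partial\pi_1 S$: the ``dynamical'' one coming from a hyperbolic metric and the ``arc-length'' one on the limit curve $\zeta^1(\partial\pi_1 S)$. I expect this to follow from the Hölder regularity of Anosov limit maps together with the fact, available in the $(1,1,2)$-hyperconvex setting of Pozzetti–Sambarino–Wienhard, that the projective action enjoys enough local conformality that the limit map $\zeta^1$ is in fact $C^1$ (indeed, the image being a $C^{1+\alpha}$ curve by Zhang–Zimmer, the natural parametrization of the limit set by the visual boundary is $C^1$). Concretely, one argues that with respect to the visual metric on $\partial\pi_1 S \cong \mathbb{S}^1$ the limit map $\zeta^1$ and its inverse are bi-Lipschitz onto the limit curve with its induced (arc-length) metric — this uses the contraction property of the Anosov flow and the hyperconvexity hypothesis — so that $h$ is bi-Lipschitz on $\mathbb{S}^1$, hence maps $\lambda$-null sets to $\lambda$-null sets. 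An alternative and perhaps cleaner route, if the bi-Lipschitz claim is delicate, is to invoke the fact that $\rho|_{\pi_1\Sigma}$ is itself $(1,1,2)$-hyperconvex Anosov, so by the Pozzetti–Sambarino–Wienhard dimension theorem the Hausdorff dimension of $\zeta^1(\partial\pi_1\Sigma)$ equals the critical exponent of the Poincaré series $\eta(s) = \sum_{\gamma\in\pi_1\Sigma}(\sigma_2/\sigma_1(\rho(\gamma)))^s$; since $\pi_1\Sigma$ is a free group of infinite index and the Poincaré series of $\pi_1 S$ has critical exponent $1$ (matching $\dim_H(\zeta^1(\partial\pi_1 S)) = 1$), a standard divergence-type comparison argument shows the critical exponent for the subgroup is strictly less than $1$, hence $\zeta^1(\partial\pi_1\Sigma)$ has Hausdorff dimension strictly less than $1$, so in particular its $1$-dimensional Hausdorff measure vanishes. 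Since $\eta_\rho$ is a $C^1$ parametrization, arc length along $\zeta^1(\partial\pi_1 S)$ is comparable to $1$-dimensional Hausdorff measure, and $\lambda = (\eta_\rho)_*$(arc length) up to a bounded density; therefore $\lambda(\eta_\rho^{-1}(\zeta^1(\partial\pi_1\Sigma))) = 0$, which is precisely $\mu_\rho(\partial\pi_1\Sigma) = 0$. I would present the proof via this second route, as it leverages exactly the PSW machinery already set up in the paper, and relegate the measure-zero input for $\partial\pi_1\Sigma$ in $\mathbb{S}^1$ to the classical statement cited at the start of the subsection.
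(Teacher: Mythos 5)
Both routes you propose stall at the decisive step, and the second one conceals the entire content of the proposition inside an unproved claim. For Route 1: the comparison homeomorphism $h=\eta_\rho^{-1}\circ\zeta^1$ from the visual circle $\partial\pi_1 S\cong\mathbb{S}^1$ to the arc-length circle is in general only bi-H\"older, not bi-Lipschitz or absolutely continuous. A bi-Lipschitz conjugacy would force, by looking at the contraction rates of $\gamma^m$ at the attracting fixed point on each circle, the identity $\ell_{\mathrm{hyp}}(\gamma)=\log\bigl(\lambda_1/\lambda_2\bigr)(\rho(\gamma))$ for every $\gamma$, i.e.\ the marked length spectrum of the auxiliary hyperbolic metric would have to coincide exactly with the first simple root length of $\rho$ --- false for a general $(1,1,2)$-hyperconvex representation, and in any case dependent on the arbitrary choice of hyperbolic structure on $S$. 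Since a merely H\"older homeomorphism need not preserve null sets, the classical fact that $\partial\pi_1\Sigma$ is Lebesgue-null in $\mathbb{S}^1$ cannot be transported through $h$; the paper states the proposition as an \emph{analogue} of that classical fact precisely because it is not a consequence of it.

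Route 2 is closer in spirit to the available machinery, but the step ``a standard divergence-type comparison shows the critical exponent of $\sum_{\gamma\in\pi_1\Sigma}(\sigma_2/\sigma_1(\rho(\gamma)))^s$ is strictly less than $1$'' is exactly where all the difficulty sits, and infinite index alone does not deliver it: even in the classical Fuchsian case, infinite-index normal subgroups with amenable quotient have the \emph{same} critical exponent as the ambient lattice, and the strict gap for quasiconvex subgroups is a genuine theorem proved via shadow lemmas and Patterson--Sullivan theory. The analogous statement for the PSW Poincar\'e series of a hyperconvex Anosov representation would have to be proved or precisely cited, not asserted; and the soft bound $\mathrm{H.dim}\le 1$ is useless here, since a Cantor subset of a $C^1$ curve can have dimension $1$ and even positive arc length. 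The paper takes a different and more elementary path: it invokes Vlamis--Yarmola's argument verbatim, which works directly on the $C^{1+\alpha}$ limit curve (regularity supplied by Zhang--Zimmer) via a Lebesgue density point and bounded distortion argument --- if $\zeta^1(\partial\pi_1\Sigma)$ had positive measure, expanding a neighbourhood of a density point by suitable group elements with controlled derivative distortion would contradict the nowhere-density of this set in the limit circle. That argument needs only the $C^{1+\alpha}$ regularity and no Hausdorff dimension computation. To salvage your write-up, either supply a proof of the strict critical-exponent gap for the restricted representation, or replace both routes by the density-point argument.
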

\begin{proof}
Vlamis-Yarmola's proof (\cite{Vlamis}, section 3) works verbatim here. The property of the limit set used in their proof is that it is a $C^{1+\alpha}$-submanifold of $\bbP(\bbR^n)$. This is also the case for real $(1,1,2)$-hyperconvex representations.
\end{proof}

\subsection{Decomposition: proof of the theorem}
Recall that for a hyperbolic surface $S$ with or without boundary, $\partial\pi_1S$ inherits an ordering from $\partial_{\infty}\bbH^2$. For any $u,v \in \partial\pi_1S$, we define 
$$(u,v) = \{w \in \partial\pi_1S ~|~ (u,w,v) \text{ is positive} \}$$
Given a loxodromic element $\alpha \in \pi_1S$ with fixed points $\alpha^{\pm} \in \partial\pi_1S$ and a reference point $y \in (\alpha^+, \alpha^-) \subset \pi_1S$, we define a function $F_{\rho} : (\alpha^+, \alpha^-) \to \bbR$ by
$$F_{\rho} (x) = \log C_{\rho}(\alpha^+, \alpha^-; x, y).$$
Note that $C_{\rho}(\alpha^+, \alpha^-; x, y)$ is positive by Lemma \ref{lem_pos}.

\begin{lem}
The map $F_{\rho}$ is a homeomorphism onto its image and it is surjective if $S$ is closed.
\end{lem}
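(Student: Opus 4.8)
The statement contains three claims — continuity of $F_{\rho}$, injectivity (in fact a homeomorphism onto the image), and surjectivity onto $\bbR$ when $S$ is closed — and I would establish them in that order.

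First, $F_{\rho}$ is well defined and continuous. For $x$ in the open arc $(\alpha^+,\alpha^-)$ the points $\alpha^+,\alpha^-,x$ are pairwise distinct and $y\notin\{\alpha^+,\alpha^-\}$; so either $x=y$, in which case $C_{\rho}(\alpha^+,\alpha^-;x,y)=1$ by Lemma~\ref{lem_pos}(2), or the quadruple $(\alpha^+,\alpha^-,x,y)$ lies in $(\partial\pi_1 S)^{4*}$, where $C_{\rho}$ is continuous and, by Lemma~\ref{lem_pos}(5) together with Lemma~\ref{lem_pos}(4), strictly positive. Hence $x\mapsto\log C_{\rho}(\alpha^+,\alpha^-;x,y)$ is a well-defined continuous real-valued function on $(\alpha^+,\alpha^-)$. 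For monotonicity, take distinct $x_1,x_2\in(\alpha^+,\alpha^-)$ and, relabelling if necessary, assume $(\alpha^+,x_1,x_2,\alpha^-)$ is cyclically ordered. The cocycle relation Lemma~\ref{lem_pos}(3) gives
\[
F_{\rho}(x_2)-F_{\rho}(x_1)=\log C_{\rho}(\alpha^+,\alpha^-;x_2,x_1),
\]
and Lemma~\ref{lem_pos}(5), applied to the cyclically ordered quadruple $(\alpha^+,x_1,x_2,\alpha^-)$, shows the right-hand side is positive. Thus $F_{\rho}$ is strictly increasing along the arc read positively from $\alpha^+$ to $\alpha^-$; in particular it is injective.

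To upgrade "continuous injection" to "homeomorphism onto its image", and to obtain surjectivity, I would control $F_{\rho}$ at the two ends of the arc. By Lemma~\ref{lem_pos}(1) one has $C_{\rho}(\alpha^+,\alpha^-;\alpha^+,y)=0$ (first and third entries coincide), and, combining Lemma~\ref{lem_pos}(1) with Lemma~\ref{lem_pos}(4), $C_{\rho}(\alpha^+,\alpha^-;\alpha^-,y)=\infty$ (since $C_{\rho}(\alpha^+,\alpha^-;y,\alpha^-)=0$); that $C_{\rho}$ is continuous, as a map into $\bbR\cup\{\infty\}$, on a neighbourhood of these two degenerate configurations is clear from the explicit ratio of Lemma~\ref{lem_cr_FGW}, where exactly one of the numerator/denominator factors vanishes (the vanishing factor being $\zeta^{n-1}(\alpha^+)(\zeta^{1}(x))\to 0$ as $x\to\alpha^+$, resp. $\zeta^{n-1}(\alpha^-)(\zeta^{1}(x))\to 0$ as $x\to\alpha^-$, because $\zeta^{1}(a)\subset\zeta^{n-1}(a)$, while the remaining three factors converge to nonzero limits by transversality of the Anosov limit maps). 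Hence $F_{\rho}(x)\to-\infty$ as $x\to\alpha^+$ and $F_{\rho}(x)\to+\infty$ as $x\to\alpha^-$, so $F_{\rho}$ extends to a continuous, strictly increasing, injective map $\overline{F}_{\rho}$ from the closed arc $\overline{(\alpha^+,\alpha^-)}=(\alpha^+,\alpha^-)\cup\{\alpha^+,\alpha^-\}$ into $[-\infty,+\infty]$. The closed arc is a closed subset of the compact set $\partial\pi_1 S$, hence compact, and $[-\infty,+\infty]$ is Hausdorff, so $\overline{F}_{\rho}$ is a homeomorphism onto its image; restricting to the open arc shows $F_{\rho}$ is a homeomorphism onto $F_{\rho}\bigl((\alpha^+,\alpha^-)\bigr)$. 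Finally, if $S$ is closed then $\partial\pi_1 S=\bbS^1$ and $(\alpha^+,\alpha^-)$ is connected, so a continuous function on it whose values approach both $-\infty$ and $+\infty$ has image all of $\bbR$ by the intermediate value theorem; thus $F_{\rho}$ is surjective.

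The only step with genuine content is the endpoint analysis: one must recognise that the degeneration of $C_{\rho}$ at $\alpha^{\pm}$ is governed by the flag inclusion $\zeta^{1}(\alpha^{\pm})\subset\zeta^{n-1}(\alpha^{\pm})$ and verify, using continuity of the limit maps and compactness of the closed arc, that the other factors stay uniformly away from $0$. Passing through the compact closed arc is also what makes the "homeomorphism onto its image" conclusion clean, since a continuous injection of the non-compact open arc into $\bbR$ need not be an embedding in general.
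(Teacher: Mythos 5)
Your proof is correct and follows essentially the same route as the paper's (which in turn follows Vlamis--Yarmola, Lemma 6.1): injectivity via the cocycle relation and properties (2)--(3) of Lemma \ref{lem_pos}, strict monotonicity via property (5), and the limits $F_{\rho}(x)\to\mp\infty$ as $x\to\alpha^{\pm}$ together with connectedness of the arc for surjectivity. You supply two details the paper leaves implicit --- the endpoint degeneration being governed by $\zeta^{1}(\alpha^{\pm})\subset\ker\zeta^{n-1}(\alpha^{\pm})$ in the FGW formula, and the compactification argument upgrading a monotone continuous injection to an embedding --- both of which are correct and welcome.
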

\begin{proof}
Our proof follows the same lines as the proof of Lemma 6.1 in \cite{Vlamis}.
We first show $F_{\rho}$ is injective. If $C_{\rho}(\alpha^+, \alpha^-, x, y) = C_{\rho}(\alpha^+, \alpha^-, x', y)$, then by (3) of Lemma \ref{lem_pos}, $C_{\rho}(\alpha^+, \alpha^-, x, x') = 1$ which implies $x = x'$ by (2) of Lemma \ref{lem_pos}. Moreover, (5) of Lemma \ref{lem_pos} implies $F_{\rho}$ preserves the ordering and therefore it is a homeomorphism onto its image. Finally, $F_{\rho}(x) \to \mp\infty$ as $x \to \alpha^{\pm}$ and $(\alpha^+, \alpha^-)$ is connected if $S$ is closed.
\end{proof}

Now we give the proof of Theorem \ref{thm_realhigherBas}.
\begin{proof}[Proof of the Theorem 3.1] 
The proof follows the same steps as that in Vlamis-Yarmola (\cite{Vlamis}, section 6). We give a sketch of the proof for the convenience of the reader and more details can be found in \cite{Vlamis}, section 6.

We fix a boundary component $\alpha_j$. Let $U \subset \bbH^2$ be the universal cover of $\Sigma$ so that $\partial\pi_1\Sigma = \partial_{\infty}U = \bar{U} \cap \bbS^1$. Then $\bbS^1 \setminus \partial_{\infty}U$ is a {\it disjoint} union of intervals of the form $\tilde{I}_{\beta} = (\beta^-, \beta^+)$ such that there is the following one-to-one correspondence $$\{ \text{Components } \tilde{I}_{\beta} \text{ of } \bbS^1 \setminus \partial_{\infty}U \} \longleftrightarrow \bigsqcup_{1 \le q \le k} \pi_1\Sigma / H_q.$$ This correspondence can be seen by looking at the action of $\pi_1\Sigma$ on $\bbH^2$.

Since $F_{\rho} : (\alpha_j^+, \alpha_j^-) \to \bbR$ is increasing, $\bbR \setminus F_{\rho}(\partial_{\infty}U)$ is a disjoint union of intervals $\hat{I}_\beta = (F_{\rho}(\beta^-), F_{\rho}(\beta^+))$. Furthermore, we have $$F_{\rho}(\alpha_j \cdot x) = F_{\rho}(x) - \ell_{\rho}(\alpha_j).$$

Set $\bbT = \bbR/\ell_{\rho}(\alpha_j)\bbZ$ and let $\pi : \bbR \to \bbT$ be the projection map. Define $I_{\beta} = \pi(\hat{I}_{\beta})$ and we have the following one-to-one correspondence
$$\{ \text{Components } I_{\beta} \text{ of } \bbT \setminus \pi(F_{\rho}(\partial_{\infty}U)) \} \longleftrightarrow \bigsqcup_{1 \le q \le k, q \neq j} H_j \backslash \pi_1\Sigma / H_q.$$

If $\lambda$ is the Lebesgue measure on $\bbR$, then $$\lambda(I_{\beta}) = F_{\rho}(\beta^+) - F_{\rho}(\beta^-) = \log C_{\rho}(\alpha_j^+, \alpha_j^-; w \cdot \alpha_j^+, w \cdot \alpha_j^-)$$
where $w$ is a double coset representative corresponding to $I_{\beta}$.

Therefore, for boundary $\alpha_j$, we have 
$$\ell_{\rho}(\alpha_j)  = \lambda(\bbT) = \lambda(\pi(F_{\rho}(\partial_{\infty}U)))  + \sum_{\beta} \lambda(I_{\beta})$$
where $\lambda(\pi(F_{\rho}(\partial_{\infty}U))) = 0$ by Proposition \ref{prop_mzero} and the fact that $\pi \circ F_{\rho} = \pi \circ \log C_{\rho}$ is differentiable.
The identity is obtained by summing over $\alpha_j$'s.
\end{proof}

\section{Hausdorff dimension of limit sets}
In this section, we summarize tools and estimates from Pozzetti-Sambarino-Wienhard's paper \cite{PSW} that we will need to prove the convergence criterion in the next section.

Let $\Gamma$ be a hyperbolic group and consider a $(1,1,2)$-hyperconvex Anosov representation $\rho : \Gamma \to \PGL(n,\bbC)$. One of the main theorems in their paper is to show that the Hausdorff dimension of the limit set equals the critical exponent of some Poincar\'e series, which is an analogue of Sullivan's theorem in the context of Kleinian groups.

\begin{thm}[\cite{PSW}]
We have $$h_{\rho}^{a_1} = \text{H.dim}(\zeta^1(\partial \Gamma) \le \text{H.dim}(\bbP(\bbC^2))=2$$
where $h_{\rho}^{a_1}$ is the critical exponent of the Poincar\'e series 
\begin{equation}\label{eq_poincare}
\eta(s) = \sum_{\gamma \in \Gamma} \left(\frac{\sigma_2}{\sigma_1}\rho(\gamma)\right)^s
\end{equation}
and $\sigma_i$'s are the singular values of $\rho(\gamma)$.
\end{thm}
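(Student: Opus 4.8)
The plan is to prove the two assertions separately: first the equality $h_\rho^{a_1} = \text{H.dim}(\zeta^1(\partial\Gamma))$ by a two-sided comparison between the critical exponent of $\eta$ and a Hausdorff dimension, and then the bound $\text{H.dim}(\zeta^1(\partial\Gamma)) \le 2$ from the fact that $(1,1,2)$-hyperconvexity confines the limit set, locally, to a $2$-real-dimensional osculating projective line. Fix a word metric on $\Gamma$, write $\mu(\rho(\gamma)) = (\log\sigma_1(\rho\gamma),\dots,\log\sigma_n(\rho\gamma))$ for the Cartan projection, and for $\gamma \in \Gamma$ let $\mathcal{O}_\gamma \subset \partial\Gamma$ denote the shadow of $\gamma$, i.e.\ the set of boundary points whose geodesic ray from the identity fellow-travels $\gamma$. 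The first step is to record, from the $a_1$-Anosov property in the quantitative form of \cite{BPS}, \cite{PSW}, the Cartan estimates: $\frac{\sigma_2}{\sigma_1}(\rho\gamma)$ decays uniformly exponentially in $|\gamma|$ (so $h_\rho^{a_1} < \infty$), and for $x$ in the interior of $\mathcal{O}_\gamma$ the point $\zeta^1(x)$ lies in a neighbourhood of $\zeta^1(\gamma^+)$ of diameter $\lesssim \frac{\sigma_2}{\sigma_1}(\rho\gamma)$, with constant uniform in $\gamma$.

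The heart of the argument is to upgrade this one-sided control to genuine comparability: $\operatorname{diam}\zeta^1(\mathcal{O}_\gamma) \asymp \frac{\sigma_2}{\sigma_1}(\rho\gamma)$, uniformly in $\gamma$. This is exactly where $(1,1,2)$-hyperconvexity is used: the transversality $(\zeta^1(x)\oplus\zeta^1(y))\cap\zeta^{n-2}(z) = \{0\}$ forces the ``derivative'' of $\zeta^1$ along the osculating line $\zeta^2$ to be nondegenerate and $\bbC$-conformal in the relevant sense, so the contraction of the projective action of $\rho(\gamma)$ transverse to $\zeta^{n-1}(\gamma^-)$ — measured precisely by $\frac{\sigma_2}{\sigma_1}(\rho\gamma)$ — acts as a bi-Lipschitz rescaling of a macroscopic piece of $\zeta^1(\partial\Gamma)$. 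Concretely, one proves a bounded-distortion statement for the holonomies $\rho(\gamma)$ acting near $\zeta^1(\gamma^+)$, combining the Hölder regularity of the limit maps with the hyperconvex transversality. I expect this local-conformality / bounded-distortion estimate to be the main obstacle; once it is in place, the rest is soft.

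Granting comparability, the upper bound $\text{H.dim}(\zeta^1(\partial\Gamma)) \le h_\rho^{a_1}$ follows from a Vitali-type covering argument: for each large $R$, the shadows $\zeta^1(\mathcal{O}_\gamma)$ with $-\log\frac{\sigma_2}{\sigma_1}(\rho\gamma) \in [R, R+1]$ cover $\zeta^1(\partial\Gamma)$ and have diameters $\asymp e^{-R}$, while for any $s > h_\rho^{a_1}$ the sum of the $s$-th powers of those diameters is dominated by a tail of $\eta(s)$ and hence tends to $0$; thus $\mathcal{H}^s(\zeta^1(\partial\Gamma)) = 0$. For the reverse inequality one runs the Patterson--Sullivan construction: take a weak-$*$ subsequential limit as $s \downarrow h_\rho^{a_1}$ of the normalized measures $\frac{1}{\eta_R(s)}\sum_{|\gamma|\le R}\big(\tfrac{\sigma_2}{\sigma_1}(\rho\gamma)\big)^s\,\delta_{\zeta^1(\gamma^+)}$, obtain a probability measure $\nu$ on $\zeta^1(\partial\Gamma)$, and establish a shadow lemma $\nu(\zeta^1(\mathcal{O}_\gamma)) \asymp \big(\tfrac{\sigma_2}{\sigma_1}(\rho\gamma)\big)^{h_\rho^{a_1}}$ from $\rho$-equivariance together with the comparability of the previous step; the mass distribution principle then yields $\text{H.dim}(\zeta^1(\partial\Gamma)) \ge h_\rho^{a_1}$.

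Finally, the bound by $2$ comes from local $2$-dimensionality of the limit set. Using $(1,1,2)$-hyperconvexity one shows, as in the real case treated by Zhang--Zimmer \cite{ZhangZimmer}, that $\zeta^1$ is differentiable transverse to $\zeta^{n-1}$ with image the one-complex-dimensional space $\operatorname{Hom}(\zeta^1(x), \zeta^2(x)/\zeta^1(x))$, so that near any point $\zeta^1(\partial\Gamma)$ is a Lipschitz graph over a $2$-real-dimensional subspace of $\bbP(\bbC^n)$; hence $\text{H.dim}(\zeta^1(\partial\Gamma)) \le 2 = \text{H.dim}(\bbP(\bbC^2))$. Combining this with the equality established above gives $h_\rho^{a_1} = \text{H.dim}(\zeta^1(\partial\Gamma)) \le 2$, as claimed.
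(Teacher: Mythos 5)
This theorem is not proved in the paper at all: it is quoted verbatim from Pozzetti--Sambarino--Wienhard \cite{PSW}, and Section 4 of the paper only extracts from \cite{PSW} the specific estimates (Lemma \ref{lem_diam}, Proposition \ref{prop_5.9}, Proposition \ref{prop_hypconf}) needed later. So there is no in-paper proof to compare against; what can be said is that your outline does reproduce the actual strategy of \cite{PSW}: upper bound on $\text{H.dim}$ by covering the limit set with images of translated cone types at infinity whose diameters are $\lesssim \frac{\sigma_2}{\sigma_1}(\rho\gamma)$ (this is exactly Lemma \ref{lem_cover} plus Lemma \ref{lem_diam}), and lower bound via a Patterson--Sullivan density for the root functional $\log(\sigma_1/\sigma_2)$ together with a shadow lemma whose separation input is supplied by hyperconvexity (Propositions \ref{prop_5.9} and \ref{prop_hypconf}).

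As a proof, however, the proposal has a genuine gap, and it sits precisely where you say you expect the main obstacle to be. First, the two-sided comparability $\operatorname{diam}\zeta^1(\mathcal{O}_\gamma)\asymp\frac{\sigma_2}{\sigma_1}(\rho\gamma)$ is not the statement that hyperconvexity actually delivers, and it is not what the shadow lemma needs: the correct lower bound (Proposition \ref{prop_5.9}) is an \emph{annular} separation estimate, $d(\zeta^1(y),\zeta^1(x))\ge c_1\frac{\sigma_2}{\sigma_1}(\rho(g_{n+L}))$ for $y$ in $\rho(g_n)X_\infty(g_n)\setminus\rho(g_{n+L})X_\infty(g_{n+L})$, valid at $(\varepsilon,L)$-locally conformal points $x$; hyperconvexity enters only to guarantee (Proposition \ref{prop_hypconf}) that \emph{every} boundary point is locally conformal with uniform constants. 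Your sketch asserts the bounded-distortion/local-conformality estimate and the resulting shadow lemma $\nu(\zeta^1(\mathcal{O}_\gamma))\asymp\bigl(\frac{\sigma_2}{\sigma_1}(\rho\gamma)\bigr)^{h^{a_1}_\rho}$ without deriving either, and these two estimates \emph{are} the content of the theorem --- everything else (the Vitali covering for the upper bound, the mass distribution principle for the lower bound) is indeed soft. Second, the final inequality $\le 2$: your appeal to \cite{ZhangZimmer} is misplaced for $K=\bbC$ (their $C^{1+\alpha}$-submanifold statement, as used in Proposition \ref{prop_mzero}, is for real hyperconvex representations, where it gives dimension $1$, not $2$); for complex representations the limit set is generally not a Lipschitz graph over a $2$-plane, and the bound $\le 2$ in \cite{PSW} is obtained from the quantitative statement that the limit set is asymptotically tangent to the osculating lines $\zeta^2(x)\cong\bbP(\bbC^2)$, fed back through the same covering machinery. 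If you want a self-contained proof rather than a citation, those are the three estimates you must supply.
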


We start by recalling the definition of distance on projective spaces.
\subsection{Distances on Grassmannians} \label{subsec_dgrass}
Let $V$ be a finite dimensional vector space over $\bbC$ and denote by $G_k(V)$ the Grassmannian of $k$-planes in $V$. Then for $P,Q \in G_k(V)$, we define the {\it distance} $d$ on $G_k(V)$ by
$$d(P,Q) = \max_{v \in P^{*}} \min_{w \in Q^*} \sin \angle(v,w)$$
where $P^{*} = P \setminus \{0\}$, $Q^{*} = Q \setminus \{0\}$ and $\angle(v,w)$ is the unique number in $[0,\pi]$ such that $\sin \angle(v,w) = \frac{||v \wedge w||}{||v||\cdot ||w||}$.

\subsection{Cone types at infinity} \label{subsec_conetype}
Given $\gamma \in \Gamma$, the {\it cone type} of $\gamma \in \Gamma$ is defined by Cannon as $$\mathcal{C}(\gamma) = \{\eta \in \Gamma ~|~ |\gamma\eta| = |\eta| + |\gamma|  \}.$$

If $\Gamma$ is a free group of rank $n$, then $\Gamma$ has $2n+1$ cone types, i.e. $\mathcal{C}(e)$ and $\mathcal{C}(g_i^{\pm})$ for $i = 1, \cdots, n$. Moreover, if $e \neq \gamma \in \Gamma$, then $\mathcal{C}(\gamma) = \mathcal{C}(g_i)$ (or $\mathcal{C}(\gamma) = \mathcal{C}(g_i^{-1})$) if the last letter of $\gamma$ is $g_i$ (or $g_i^{-1}$).

We associate to every cone type $\mathcal{C}(\gamma)$ a subset of $\partial \Gamma$, the {\it cone type at infinity}, by considering limit points of geodesic rays starting at the identity and contained entirely in $\mathcal{C}(\gamma)$, i.e.
$$\mathcal{C}_{\infty}(\gamma) = \{ [(\alpha_i)] ~|~ (\alpha_i) \text{ is a geodesic ray with } \alpha_0 = e, \alpha_i \in \mathcal{C}(\gamma)  \}.$$
Since there are only finitely many cone types, we have a finite cover of $\partial \Gamma$ by $\mathcal{C}_{\infty}(\gamma)$.

With cone types at infinity, we consider the following cover of $\partial \Gamma$ which serves as Sullivan's shadows.
\begin{lem}  [\cite{PSW}, Lemma 2.3]\label{lem_cover} 
Given $T>0$, the family of open sets $$U_T = \{\gamma\mathcal{C}_{\infty}(\gamma) ~|~ |\gamma| \ge T\}$$ defines an open cover of $\partial \Gamma$.
\end{lem}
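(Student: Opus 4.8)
The statement to prove is Lemma~2.3 of \cite{PSW} as quoted: for every $T > 0$, the family $U_T = \{\gamma\,\mathcal{C}_\infty(\gamma) : |\gamma| \ge T\}$ is an open cover of $\partial\Gamma$.

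The plan is to exhibit, for an arbitrary boundary point $\xi \in \partial\Gamma$, some $\gamma$ with $|\gamma| \ge T$ such that $\xi \in \gamma\,\mathcal{C}_\infty(\gamma)$. First I would fix a geodesic ray $(\alpha_i)_{i \ge 0}$ from the identity $e = \alpha_0$ representing $\xi$; such a ray exists because $\Gamma$ is hyperbolic (and one may pass to the free-group model, where rays are literally reduced infinite words). For an index $i \ge T$, write $\gamma = \alpha_i$, so $|\gamma| = i \ge T$. The tail of the ray, namely the sequence $\beta_j = \gamma^{-1}\alpha_{i+j}$ for $j \ge 0$, is again a geodesic ray from $e$, and since $(\alpha_i)$ is geodesic we have $|\gamma\beta_j| = |\alpha_{i+j}| = (i+j) = |\gamma| + |\beta_j|$, i.e.\ each $\beta_j$ lies in the cone type $\mathcal{C}(\gamma)$. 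Hence $[(\beta_j)] \in \mathcal{C}_\infty(\gamma)$, and since $(\gamma\beta_j)_j = (\alpha_{i+j})_j$ is a subray of $(\alpha_i)$, it converges to the same boundary point $\xi$. Therefore $\xi = \gamma\,[(\beta_j)] \in \gamma\,\mathcal{C}_\infty(\gamma)$, which shows $U_T$ covers $\partial\Gamma$.

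It remains to address openness of the sets $\gamma\,\mathcal{C}_\infty(\gamma)$. Since $\Gamma$ acts on $\partial\Gamma$ by homeomorphisms, it suffices to check that each $\mathcal{C}_\infty(\gamma)$ is open. The cone type at infinity $\mathcal{C}_\infty(\gamma)$ depends only on the cone type $\mathcal{C}(\gamma)$, of which there are finitely many (for a free group, the $2n+1$ types listed above); concretely, for $\gamma$ ending in the letter $g_i$, $\mathcal{C}_\infty(\gamma)$ is the set of boundary points represented by reduced infinite words whose first letter is not $g_i^{-1}$ — a cylinder set, which is both open and closed in $\partial\Gamma$. (In the general hyperbolic-group setting one invokes the standard fact, also used in \cite{PSW}, that cone types at infinity are open; this is where a little care is genuinely needed, but it is classical.) Combining, each $\gamma\,\mathcal{C}_\infty(\gamma)$ is open, and together with the covering property established above this proves the lemma.

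The only real subtlety is the openness claim for non-free hyperbolic groups, where $\mathcal{C}_\infty(\gamma)$ is not literally a cylinder and one must use a hyperbolicity estimate: if a ray from $e$ stays in $\mathcal{C}(\gamma)$ up to a large depth, then any boundary point close enough to its limit is represented by a ray that also stays in $\mathcal{C}(\gamma)$, up to the usual fellow-traveling constant. For the application in this paper $\Gamma = \pi_1\Sigma$ is free, so the cylinder-set description suffices and the argument is the short combinatorial one sketched above.
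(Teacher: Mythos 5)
Your argument is correct and is essentially the standard one from the cited source: the paper itself gives no proof of this lemma (it is quoted directly as Lemma~2.3 of \cite{PSW}), and the PSW proof is exactly your observation that for a geodesic ray $(\alpha_i)$ from $e$ to $\xi$ the translated tail $\gamma^{-1}\alpha_{i+j}$ with $\gamma=\alpha_i$ stays in $\mathcal{C}(\gamma)$, so $\xi\in\gamma\,\mathcal{C}_\infty(\gamma)$ for every $i\ge T$. Your treatment of openness via the cylinder-set description in the free-group case (the relevant one here, since $\pi_1\Sigma$ is free) is also fine.
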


The following lemma gives an upper bound for the diameter of the set $\zeta^1(\gamma\mathcal{C}_{\infty}(\gamma))$ in terms of the ratio of singular values of $\rho(\gamma)$. As a consequence, the critical exponent $h_{\rho}^{a_1}$ is an upper bound for the Hausdorff dimension.
\begin{lem} \label{lem_diam}
Let $\rho : \Gamma \to \PGL(n,\bbC)$ be projective Anosov. Then there exists $\delta>0$ such that $$\text{diam} \zeta^1(\gamma\mathcal{C}_{\infty}(\gamma)) \le \frac{2}{\delta} \frac{\sigma_2}{\sigma_1}(\rho(\gamma)).$$
\end{lem}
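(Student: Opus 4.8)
\textbf{Proof plan for Lemma \ref{lem_diam}.}

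The plan is to control the diameter of $\zeta^1(\gamma\mathcal{C}_{\infty}(\gamma))$ by comparing it with the diameter of the image of the cone type at infinity under the action of $\rho(\gamma)$ on $\bbP(\bbC^n)$, and then estimating how much $\rho(\gamma)$ contracts a set that avoids a fixed neighborhood of the repelling hyperplane. Concretely, fix a Cartan decomposition $\rho(\gamma) = K_\gamma A_\gamma L_\gamma$ with $A_\gamma = \operatorname{diag}(\sigma_1(\rho(\gamma)),\ldots,\sigma_n(\rho(\gamma)))$. Then for any point $\xi \in \bbP(\bbC^n)$, the displacement of $\xi$ from the attracting line $U_1(\gamma) \defeq K_\gamma(e_1)$ gets multiplied by at most $\sigma_2/\sigma_1(\rho(\gamma))$ provided $\xi$ stays a definite distance away from the hyperplane $K_\gamma(\langle e_2,\ldots,e_n\rangle)$; this is the standard ``North-South'' contraction estimate for the projective action in terms of singular values. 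The key points therefore are: (i) $\zeta^1(\gamma\mathcal{C}_{\infty}(\gamma)) = \rho(\gamma)\bigl(\zeta^1(\mathcal{C}_{\infty}(\gamma))\bigr)$ by $\rho$-equivariance of the limit map; and (ii) the set $\zeta^1(\mathcal{C}_{\infty}(\gamma))$ stays uniformly transverse to the relevant hyperplane, uniformly over all $\gamma$, because there are only finitely many cone types and transversality of $\zeta^1$ and $\zeta^{n-1}$ (the Anosov property) is a closed condition on a compact set.

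First I would set up the singular value / Cartan projection notation and recall the elementary fact that for $g = k a \ell \in \PGL(n,\bbC)$ with singular values $\sigma_1 \ge \sigma_2 \ge \cdots$, and for two lines $\ell_1, \ell_2 \in \bbP(\bbC^n)$ whose distance to the hyperplane $\ell\bigl(\langle e_2,\ldots,e_n\rangle\bigr)$ is at least some $c > 0$, one has $d(g\ell_1, g\ell_2) \le \frac{1}{c^2}\,\frac{\sigma_2}{\sigma_1}(g)\, d(\ell_1,\ell_2)$ (this is precisely the kind of estimate recorded in \cite{PSW}; I would cite it rather than reprove it). Next I would invoke the Anosov transversality: since there are finitely many cone types $\mathcal{C}_\infty(\gamma_1),\ldots,\mathcal{C}_\infty(\gamma_m)$, each $\overline{\zeta^1(\mathcal{C}_\infty(\gamma_i))}$ is compact and disjoint from a neighborhood of the point $\zeta^{n-1}$ attached to any point it should be transverse to; combined with the contraction flow defining the Anosov condition, one gets a uniform constant $c > 0$ such that for every $\gamma$, the set $\zeta^1(\mathcal{C}_\infty(\gamma))$ lies at distance $\ge c$ from the $\rho(\gamma)$-repelling hyperplane $\zeta^{n-1}(\gamma^-)$-type object (more precisely from $L_\gamma(\langle e_2,\ldots,e_n\rangle)$, which is $C^0$-close to the span corresponding to the smallest $n-1$ singular directions). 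Then applying the contraction estimate with this uniform $c$, and using $\operatorname{diam}\zeta^1(\mathcal{C}_\infty(\gamma)) \le \operatorname{diam}\bbP(\bbC^n) \le 2$, I would obtain
$$
\operatorname{diam}\zeta^1(\gamma\mathcal{C}_\infty(\gamma)) = \operatorname{diam}\rho(\gamma)\zeta^1(\mathcal{C}_\infty(\gamma)) \le \frac{1}{c^2}\,\frac{\sigma_2}{\sigma_1}(\rho(\gamma))\cdot 2,
$$
and setting $\delta = c^2$ finishes the proof.

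The main obstacle is step (ii): making the transversality of $\zeta^1(\mathcal{C}_\infty(\gamma))$ to the singular-value hyperplane of $\rho(\gamma)$ \emph{uniform in $\gamma$}. The subtlety is that the relevant hyperplane is the one coming from the Cartan decomposition of $\rho(\gamma)$, not literally $\zeta^{n-1}(\gamma^-)$, and one must quantify how close these are --- this is where the projective Anosov hypothesis genuinely enters, since it guarantees that the Cartan attracting/repelling flags converge exponentially to the limit-map values $\zeta^1(\gamma^+)$, $\zeta^{n-1}(\gamma^-)$, with uniform constants. I would handle this by citing the standard comparison between the Cartan projection and the Anosov limit maps (as in \cite{BPS} or the relevant lemmas of \cite{PSW}): there is $C, c_0 > 0$ with $d\bigl(L_\gamma(\langle e_2,\ldots,e_n\rangle), \zeta^{n-1}(\gamma^{-1}\cdot o\text{-direction})\bigr) \le C e^{-c_0|\gamma|}$, so that beyond a uniform displacement the two differ negligibly, and for the finitely many short $\gamma$ one checks transversality directly. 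Once that comparison is in hand, the finiteness of cone types does the rest and the uniform $\delta$ drops out.
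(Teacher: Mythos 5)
Your proposal is correct and follows essentially the same route as the paper, which simply defers to Lemma 4.2 and the proof of Proposition 4.1 in \cite{PSW}: equivariance reduces the claim to the contraction of $\rho(\gamma)$ on $\zeta^1(\mathcal{C}_\infty(\gamma))$, the singular-value estimate from the Cartan decomposition does the contraction, and the uniform transversality constant $\delta$ comes from the finiteness of cone types together with the comparison between the Cartan repelling hyperplane and the limit map $\zeta^{n-1}$. The only cosmetic difference is that you phrase the contraction as a Lipschitz bound with constant $\frac{1}{c^2}\frac{\sigma_2}{\sigma_1}$ rather than bounding the distance of each image point to the Cartan attractor $U_1(\rho(\gamma))$ by $\frac{1}{\delta}\frac{\sigma_2}{\sigma_1}$ and applying the triangle inequality, which is how the cited proof produces the constant $\frac{2}{\delta}$ directly.
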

\begin{proof}
The proof is given in Lemma 4.2 and the proof of Proposition 4.1 in \cite{PSW}.
\end{proof}

\subsection{Locally conformal points and thickened cone types at infinity}
To show that the critical exponent $h_{\rho}^{a_1}$ is also a lower bound for the Hausdorff dimension, Pozzetti-Sambarino-Wienhard introduced {\it locally conformal points} in the boundary of the group $\partial\Gamma$ which detects local conformality of the projective action.

\begin{defn}
Let $\rho : \Gamma \to \PGL(n,\bbC)$ be projective Anosov. We say that $x \in \partial\Gamma$ is a {\it $(\varepsilon, L)$-locally conformal point} of $\rho$ if there exists a geodesic ray $\{\alpha_i \}$ in $\Gamma$ based at the identity with end point $x$ such that
\begin{enumerate}
	\item for all big enough $i$, $p_2(\alpha_i) = p_2$ does not depend on $i$,
	\item for every $i > L$ and for every $z \in \mathcal{C}_{\infty}(\alpha_i)$, one has $$\sin \angle(\zeta^1(z) \oplus \rho(\alpha_i^{-1})\zeta^1(x), U_{d-p_2}(\rho(\alpha_i^{-1})) > \varepsilon.$$
\end{enumerate}
where $p_2(\alpha_i)$ is the index of the second gap of the eigenvalues of $\alpha_i$.
\end{defn}

To prove the lower bound on Hausdorff dimension, the authors considered coverings of $\zeta^1(\partial\Gamma)$ obtained by translating {\it thickened cone types at infinity} that we define now.

Let $\rho : \Gamma \to \PGL(n,\bbC)$ be projective Anosov. Since there is a positive lower bound on the distance of attractors and repellers of geodesic rays through the origin, define the {\it least angle} $\delta_{\rho}$ by $$\delta_{\rho} = \inf \sin \left( \angle(U_1(\rho(\alpha_k)),  U_{d-1}(\rho(\alpha_{-m}))\right)$$
where $(\alpha_i)$ is a biinfinite geodesic through the origin, and $k,m > L$ for $L$ large.
The {\it thickened cone type at infinity} $X_{\infty}(\gamma)$ is a $\delta_{\rho}/2$-neighbourhood of $\zeta^1_{\rho}(\mathcal{C}_{\infty}(\gamma))$ in $\zeta^1_{\rho}(\partial \Gamma)$, i.e.
$$X_{\infty}(\gamma) = N_{\delta_{\rho}/2} (\zeta^1_{\rho}(\mathcal{C}_{\infty}(\gamma))) \cap \zeta^1_{\rho}(\partial \Gamma).$$

The next proposition provides an estimate for the distance between two points in the projective space which will be useful to us. In our case, $x$ and $y$ in the following proposition will play the role of translates of the two fixed points of a boundary element in $\partial\Gamma$.

\begin{prop}[\cite{PSW}, Proposition 5.9]\label{prop_5.9}
There exists a constant $c_1$ depending only on $\rho$ such that if $L$ is big enough and $\{g_i\} \subset \Gamma$ is a geodesic ray with endpoint $x$ which is locally conformal, then for every $y \in \partial\Gamma$ such that $\zeta^1_{\rho}(y) \in \rho(g_n)X_{\infty}(g_n) \setminus \rho(g_{n+L})X_{\infty}(g_{n+L})$, we have $$d(\zeta^1_{\rho}(y), \zeta^1_{\rho}(x)) \ge c_1 \frac{\sigma_2}{\sigma_1}(\rho(g_{n+L})).$$ 
\end{prop}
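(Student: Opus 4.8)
The plan is to localize the estimate to a fixed geodesic ray $\{g_i\}$ with locally conformal endpoint $x$, and then run the classical Sullivan-shadow argument in the projective setting, using the contracting properties of the Anosov limit maps to transfer a lower bound at scale $1$ (coming from the angle between the attractor and the translated repeller) back down to scale $\sigma_2/\sigma_1(\rho(g_{n+L}))$. First I would fix $L$ large (so that the local conformality condition and the definition of the least angle $\delta_\rho$ both apply for indices $>L$) and write $\gamma = g_{n+L}$. The key observation is that $\rho(\gamma^{-1})$ maps $\zeta^1_\rho(x)$ — which lies near the attractor $U_1(\rho(\gamma))$ because $\{g_i\}$ is a geodesic ray ending at $x$ — to a point uniformly bounded away from the subspace $U_{d-p_2}(\rho(\gamma^{-1}))$ along which $\rho(\gamma^{-1})$ expands the complementary directions; this is precisely clause (2) of the definition of a locally conformal point, together with the fact that $\zeta^1_\rho(y) \in \rho(g_n) X_\infty(g_n)$ forces $\zeta^1_\rho(y)$ to lie in a translate of a cone type at infinity rooted at $\gamma$, so that $\zeta^1_\rho(\gamma^{-1}y)$ sits in the fixed angular window where the conformality estimate holds.

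Next I would quantify the expansion. Writing $\sigma_1 = \sigma_1(\rho(\gamma))$, $\sigma_2 = \sigma_2(\rho(\gamma))$, the map $\rho(\gamma^{-1})$ has singular values $\sigma_1^{-1} \le \sigma_2^{-1} \le \cdots$, so on the orthogonal complement of the top singular direction it expands distances (measured in the $\sin\angle$ metric on $\bbP(\bbC^n)$) by roughly the factor $\sigma_1/\sigma_2$, provided the two points involved both stay uniformly transverse to the contracting hyperplane of $\rho(\gamma^{-1})$. Applying this with the pair $\zeta^1_\rho(x)$, $\zeta^1_\rho(y)$: by the locally conformal hypothesis $\zeta^1_\rho(\gamma^{-1}x)$ is $\varepsilon$-away from the relevant expanding subspace, and because $\zeta^1_\rho(y) \notin \rho(g_{n+L}) X_\infty(g_{n+L}) = \rho(\gamma) X_\infty(\gamma)$ the translated point $\zeta^1_\rho(\gamma^{-1}y)$ lies outside the $\delta_\rho/2$-neighbourhood of $\zeta^1_\rho(\mathcal{C}_\infty(\gamma))$, hence is separated from $\zeta^1_\rho(\gamma^{-1}x)$ by a definite angle bounded below in terms of $\delta_\rho$ and $\varepsilon$. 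So $d(\zeta^1_\rho(\gamma^{-1}x), \zeta^1_\rho(\gamma^{-1}y)) \ge c$ for a constant $c = c(\rho)$, and then the Lipschitz bound for the projective action of $\rho(\gamma)$ (which contracts by at most $\sigma_2/\sigma_1$ in the directions transverse to the top attractor, cf. the estimate behind Lemma \ref{lem_diam}) yields
\begin{equation*}
d(\zeta^1_\rho(x), \zeta^1_\rho(y)) = d(\rho(\gamma)\zeta^1_\rho(\gamma^{-1}x), \rho(\gamma)\zeta^1_\rho(\gamma^{-1}y)) \ge c\, \frac{\sigma_2}{\sigma_1}(\rho(\gamma)),
\end{equation*}
which is the claim with $c_1 = c$ after renaming $\gamma = g_{n+L}$.

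I expect the main obstacle to be bookkeeping the transversality uniformly: one must check that \emph{both} arguments fed into the expansion estimate stay in the good region where the bound $\sigma_1/\sigma_2$ for the expansion of $\rho(\gamma^{-1})$ (equivalently, the contraction of $\rho(\gamma)$) is valid up to a multiplicative constant. For $\zeta^1_\rho(x)$ this is exactly the $\varepsilon$ in clause (2) of the locally conformal definition; for $\zeta^1_\rho(y)$ one needs to know that membership in $\rho(g_n)X_\infty(g_n)$, combined with $\{g_i\}$ being a geodesic ray, confines $\zeta^1_\rho(\gamma^{-1}y)$ to one of the finitely many thickened cone types at infinity based at $\gamma$, which are uniformly transverse to $U_{d-1}(\rho(\gamma^{-1}))$ by the definition of $\delta_\rho$. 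Assembling these two transversality inputs and tracking how the constants depend only on $\rho$ (and not on $n$ or $\gamma$) — which is where the uniform hyperbolicity of the Anosov flow and the finiteness of the set of cone types are used — is the technical heart; the rest is the standard contraction/expansion dichotomy for Anosov limit maps that already underlies Lemma \ref{lem_diam}.
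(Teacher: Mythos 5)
First, a remark on the comparison you asked for: the paper does not prove this proposition at all — it is imported verbatim from Pozzetti--Sambarino--Wienhard with a citation — so there is no in-paper argument to measure your proposal against. Judged on its own, your sketch has the correct global skeleton, which is also the one in \cite{PSW}: pull both points back by $\rho(g_{n+L})^{-1}$, note that $\zeta^1_{\rho}(g_{n+L}^{-1}x)$ lies in $\zeta^1_{\rho}(\mathcal{C}_{\infty}(g_{n+L}))$ (since $g_{n+L}^{-1}x$ is the endpoint of the geodesic ray $\{g_{n+L}^{-1}g_{n+L+i}\}$, which stays in $\mathcal{C}(g_{n+L})$) while $\zeta^1_{\rho}(g_{n+L}^{-1}y)$ lies outside the $\delta_{\rho}/2$-neighbourhood $X_{\infty}(g_{n+L})$, so the pulled-back points are separated by at least $\delta_{\rho}/2$; then push forward with a lower bound of order $\frac{\sigma_2}{\sigma_1}(\rho(g_{n+L}))$ on the contraction. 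That step of the argument is sound.

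The gaps are precisely at the two load-bearing estimates, which you assert rather than prove. (i) The final inequality cannot be obtained from ``the estimate behind Lemma \ref{lem_diam}'': that lemma is the \emph{upper} bound on image distances (a diameter estimate), whereas here you need the \emph{reverse} inequality, namely that $\rho(g_{n+L})$ contracts the particular pair by no more than a factor $c\,\frac{\sigma_2}{\sigma_1}$. For a generic pair near the attractor the contraction can be as severe as $\sigma_d/\sigma_1$, so this lower bound is genuinely a different statement; it requires a linear-algebra lemma saying that if the $2$-plane spanned by the two pulled-back points makes angle at least $\varepsilon$ with $U_{d-p_2}(\rho(g_{n+L}^{-1}))$, then the restriction of $\rho(g_{n+L})$ to that plane has both singular values comparable to $\sigma_1$ and $\sigma_2$. (ii) The transversality input you need is for the plane $\zeta^1_{\rho}(g_{n+L}^{-1}y)\oplus\rho(g_{n+L}^{-1})\zeta^1_{\rho}(x)$, but clause (2) of the locally-conformal definition only covers $z\in\mathcal{C}_{\infty}(g_{n+L})$, and the hypothesis only places $g_n^{-1}y$ in the \emph{thickened} cone type at $g_n$ — it does not place $g_{n+L}^{-1}y$ in the cone type at $g_{n+L}$. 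Your claim that membership in $\rho(g_n)X_{\infty}(g_n)$ ``confines $\zeta^1_{\rho}(\gamma^{-1}y)$ to one of the finitely many thickened cone types at infinity based at $\gamma$'' is exactly the point that has to be argued; in \cite{PSW} this is handled by working at the index $n$ where the hypothesis lives, using an angle estimate valid on the whole $\delta_{\rho}/2$-thickening (this is the role of statements like Proposition \ref{prop_hypconf}), and exploiting that $g_n^{-1}g_{n+L}$ ranges over the finite set of words of length $L$ to move the estimate between the levels $n$ and $n+L$. As written, the proposal is a correct road map, but the two steps it defers are the entire content of the proposition.
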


Following this proposition, the authors showed that the critical exponent is also a lower bound for the Hausdorff dimension of the limit set, provided the existence of ``abundant'' local conformal points.

\subsection{Hyperconvexity and local conformality}
The link between hyperconvexity and local conformality is provided in the following proposition, which states that hyperconvexity guarantees the abundance of local conformal points.

\begin{prop}[\cite{PSW}, Proposition 6.7] \label{prop_hypconf}
Let $\rho : \Gamma \to \PGL(n, \bbC)$ be $(p,q,r)$-hyperconvex. Then there exist constants $L, \varepsilon$ such that for every $\alpha \in \Gamma$ with $|\alpha| > L$, for every $x \in \mathcal{C}_{\infty}(\alpha)$ and every $y \in (\zeta^q)^{-1}X_{\infty}(\alpha)$, it holds
$$\sin \angle(\zeta^p(x)\oplus \zeta^q(y), U_{d-r}(\rho(\alpha^{-1})) > \varepsilon.$$
\end{prop}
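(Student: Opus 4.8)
The plan is to deduce Proposition 6.7 directly from the defining properties of $(p,q,r)$-hyperconvexity together with the transversality/compactness package already assembled for thickened cone types at infinity, in the same style as the locally-conformal-point estimate of Proposition 5.9. First I would observe that the quantity $\sin\angle(\zeta^p(x)\oplus\zeta^q(y),\,U_{d-r}(\rho(\alpha^{-1})))$ measures the transversality between a $(p+q)$-dimensional subspace and the $(d-r)$-dimensional ``repelling'' space of $\rho(\alpha^{-1})$; since $p+q\le r$, generic transversality is exactly the statement that $(\zeta^p(x)\oplus\zeta^q(y))\cap U_{d-r}(\rho(\alpha^{-1}))=\{0\}$, and the content of the proposition is that this intersection stays uniformly away from a nonzero intersection as $\alpha,x,y$ vary.

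The key steps, in order, would be: (1) Unwind $U_{d-r}(\rho(\alpha^{-1}))$ as a Grassmannian limit: as $|\alpha|\to\infty$ the space $U_{d-r}(\rho(\alpha^{-1}))$ converges (with uniform exponential speed, by the Anosov/singular-value gap estimates recalled in Section 4) to $\zeta^{n-r}(\alpha^-)$, where $\alpha^-\in\partial\Gamma$ is the repelling fixed point of $\alpha$, and moreover $\alpha^-$ lies in the cone type at infinity region governing $\mathcal{C}_\infty(\alpha)$ after the relevant translation. (2) For $x\in\mathcal{C}_\infty(\alpha)$ and $y$ with $\zeta^q(y)\in X_\infty(\alpha)$ — which, since $X_\infty$ is the $\delta_\rho/2$-thickening of $\zeta^q(\mathcal{C}_\infty(\alpha))$ — we get that $x$ and $\alpha^-$, and $y$ and $\alpha^-$, are triples of points in $\partial\Gamma$ that remain pairwise ``separated'' in the appropriate sense; then the hyperconvexity hypothesis $(\zeta^p(x)\oplus\zeta^q(y))\cap\zeta^{n-r}(z)=\{0\}$ for pairwise distinct $x,y,z$ applies with $z$ a point near $\alpha^-$. (3) Upgrade the pointwise non-vanishing to a uniform lower bound: the map $(x,y,z)\mapsto \sin\angle(\zeta^p(x)\oplus\zeta^q(y),\zeta^{n-r}(z))$ is continuous and positive on the compact set of ``admissible'' triples (those arising from distinct cone-type-at-infinity configurations), hence bounded below by some $\varepsilon_0>0$; one then absorbs the $O(\lambda_2/\lambda_1)^{|\alpha|}$ error between $U_{d-r}(\rho(\alpha^{-1}))$ and $\zeta^{n-r}(\alpha^-)$ by choosing $L$ large, obtaining the claimed $\varepsilon$.

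The main obstacle I expect is Step (3): one must make precise the compactness of the parameter space of admissible triples $(x,y,z)$. The subtlety is that $x$ ranges over $\mathcal{C}_\infty(\alpha)$ and $z$ near $\alpha^-$, and as $\alpha$ varies $x$ and $z$ can \emph{approach each other} — so the triple is not staying in a fixed compact subset of the pairwise-distinct locus of $(\partial\Gamma)^3$. The resolution (and this is where the cone-type machinery earns its keep) is to translate everything by $\rho(\alpha^{-1})$ and work instead with $\rho(\alpha^{-1})(\zeta^p(x)\oplus\zeta^q(y))$ versus the \emph{fixed} data $U_{d-r}(\rho(\alpha^{-1}))\to\zeta^{n-r}(\mathrm{origin\ side})$; because there are only finitely many cone types, $\alpha^{-1}\mathcal{C}_\infty(\alpha)$ and $\alpha^{-1}X_\infty(\alpha)$ range over a \emph{finite} list of compact sets, each contained in the non-vanishing locus by hyperconvexity, so continuity plus finiteness gives the uniform $\varepsilon$. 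One also has to check that $\zeta^q(y)\in X_\infty(\alpha)$ genuinely forces $y\in(\zeta^q)^{-1}X_\infty(\alpha)$ to be a legitimate point of $\partial\Gamma$ transverse to the data (not merely that $\zeta^q(y)$ is $\delta_\rho/2$-close to the limit set), which is where the choice of $\delta_\rho$ as the ``least angle'' in Section 4.3 is used.
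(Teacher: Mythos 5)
The paper does not prove this statement: it is imported verbatim from Pozzetti--Sambarino--Wienhard (\cite{PSW}, Proposition 6.7), so there is no in-paper argument to compare against, and I will instead measure your sketch against the proof in \cite{PSW}. Your overall architecture (identify the limit of $U_{d-r}(\rho(\alpha^{-1}))$ as a limit-map image, invoke the hyperconvexity transversality, and upgrade to a uniform bound by compactness plus finiteness of cone types) is the right one, but two specific steps are off. First, $U_{d-r}(\rho(\alpha^{-1}))$ is approximated by $\zeta^{n-r}(z)$ where $z$ is the endpoint of a geodesic ray from $e$ passing through the group element $\alpha^{-1}$ --- not by $\zeta^{n-r}(\alpha^-)$ for $\alpha^-$ the repelling fixed point of $\alpha$ (these differ when $\alpha$ is not cyclically reduced). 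With the correct $z$, the collision you single out as the main obstacle cannot occur: $x\in\mathcal{C}_{\infty}(\alpha)$ means precisely that the path from $\alpha^{-1}$ through $e$ to $x$ is geodesic, so $x$ and $z$ are the two endpoints of a geodesic through the origin and are separated by the least-angle constant $\delta_{\rho}$, while the $\delta_{\rho}/2$-thickening defining $X_{\infty}(\alpha)$ does the same job for $y$. Relatedly, the proposed reduction by ``translating everything by $\rho(\alpha^{-1})$'' cannot work as stated, because $\rho(\alpha^{-1})$ acts non-conformally on $\bbP(\bbC^n)$ and does not preserve the angles being estimated; the finiteness of cone types is exploited in $\partial\Gamma$, not by pushing subspaces around with $\rho(\alpha^{-1})$.

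The degeneration your compactness step does not address is the one that genuinely requires work: $x$ and $y$ both range over (a thickening of) $\mathcal{C}_{\infty}(\alpha)$, so in a limiting configuration one may have $x=y$. Hyperconvexity only asserts $(\zeta^p(x)\oplus\zeta^q(y))\cap\zeta^{n-r}(z)=\{0\}$ for \emph{pairwise distinct} triples, and on the diagonal the span $\zeta^p(x)+\zeta^q(y)$ a priori drops dimension, so the function $(x,y,z)\mapsto\sin\angle(\zeta^p(x)\oplus\zeta^q(y),\zeta^{n-r}(z))$ is not obviously continuous, let alone positive, on the closure of the admissible set. What rescues the argument in \cite{PSW} is their Proposition 6.2: for hyperconvex representations the map $(x,y)\mapsto\zeta^p(x)\oplus\zeta^q(y)$ extends continuously across the diagonal to a $(p+q)$-plane contained in $\zeta^{r}(x)$, which is then transverse to $\zeta^{n-r}(z)$ by the Anosov property. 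This continuity is itself a nontrivial consequence of hyperconvexity (essentially the $C^1$ regularity of the limit set in the $(1,1,2)$ case), and without it the ``continuous and positive on a compact set, hence bounded below'' step in your outline has a hole.
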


\section{Complexified identities}
The goal of this section is to prove the following complexification of our identities.
\begin{thm}[Complexified identities] \label{thm_cxhigherBas}
	Let $\Sigma$ be a connected compact oriented hyperbolic surface with geodesic boundaries $a_1,\cdots, a_k$ whose double has genus at least $2$. Let $\alpha_1,\cdots, \alpha_k \in \pi_1\Sigma$ represent the free homotopy classes of $a_1,\cdots, a_k$. If $\rho_0: \pi_1\Sigma \to \text{PGL}(n, \bbR)$ is a real $(1,1,2)$-hyperconvex Anosov representation and $\rho$ is in the same path component as $\rho_0$ in $\mathcal{S}_{<1}$, then
	\begin{equation} \label{eq_alg}
	\sum_{j=1}^k \ell_{\rho}(\rho(\alpha_j)) = \sum_{j,q =1}^k \sum_{w \in \mathcal{L}_{j,q}} \log C_{ \rho}(\alpha_j^+,\alpha_j^-;w\cdot \alpha_q^+,w \cdot \alpha_q^-) \text{~~mod~~} 2\pi i,
	\end{equation}
	where $\alpha_j^+,\alpha_j^-$ are the attracting and repelling fixed points of $\alpha_j$, respectively. Moreover, the series converges absolutely.
\end{thm}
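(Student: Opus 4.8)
The plan is to recast the identity in multiplicative form so that both sides become genuinely single‑valued holomorphic functions on the deformation space, then to observe that these functions agree on the real locus by Theorem \ref{thm_realhigherBas}, and finally to conclude by the identity theorem using that the real locus is a maximal totally real submanifold. Since $\pi_1\Sigma$ is free, say of rank $r$, identify $\operatorname{Hom}(\pi_1\Sigma,\PGL(n,\bbC))$ with $\PGL(n,\bbC)^r$, inside which $\mathcal{S}_{<1}$ is open; let $V$ be the path component of $\rho_0$, a connected open subset.

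First I would check that every term of the right‑hand side is a well‑defined holomorphic germ on all of $\mathcal{S}$. For $w\in\mathcal{L}_{j,q}$ the combinatorics of the double‑coset/orthogeodesic correspondence forces $\alpha_j^{+}\neq w\cdot\alpha_q^{+}$ and $\alpha_j^{-}\neq w\cdot\alpha_q^{-}$ (this is exactly what makes the corresponding term in Theorem \ref{thm_realhigherBas} meaningful); together with the formula for the FGW cross ratio in Lemma \ref{lem_cr_FGW} and the transversality $\zeta^1(x)\oplus\zeta^{n-1}(y)=\bbC^n$ for $x\neq y$ built into projective Anosovness, this shows $C_\rho(\alpha_j^{+},\alpha_j^{-};w\cdot\alpha_q^{+},w\cdot\alpha_q^{-})\neq 0$ for every $\rho\in\mathcal{S}$ (the reasoning behind Lemma \ref{lem_pos}(1) is valid over $\bbC$). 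Hence, letting $h(\rho)$ be the ratio of the finite product $\prod_{j}\frac{\lambda_1}{\lambda_n}(\rho(\alpha_j))$ to the product $\prod_{j,q}\prod_{w\in\mathcal{L}_{j,q}}C_\rho(\alpha_j^{+},\alpha_j^{-};w\cdot\alpha_q^{+},w\cdot\alpha_q^{-})$, I claim $h$ is a single‑valued holomorphic function on $V$: the numerator is a finite product of holomorphic functions since $\lambda_1,\lambda_n$ vary holomorphically; by Proposition 5.4 the series $\sum_{j,q,w}\lvert\log C_\rho(\cdots)\rvert$ converges on $\mathcal{S}_{<1}$, so the denominator is an absolutely convergent, hence nonzero, infinite product; and by (the proof of) Proposition 5.7 this convergence is uniform on compact subsets of $\mathcal{S}_{<1}$, making $h$ holomorphic. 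Exponentiating, the identity to be proved is equivalent to the single statement $h\equiv 1$ on $V$.

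Next I would feed in the real case. Theorem \ref{thm_realhigherBas} gives, for every real $(1,1,2)$-hyperconvex representation in $\mathcal{S}_{<1}$, the honest equality $\sum_j\ell_\rho(\rho(\alpha_j))=\sum_{j,q,w}\log C_\rho(\cdots)$, with each factor $C_\rho(\cdots)>0$ by Lemma \ref{lem_pos}(5); exponentiating, $h\equiv 1$ on $V\cap\PGL(n,\bbR)^r$. Now $\PGL(n,\bbR)^r$ is a real‑analytic submanifold of $\PGL(n,\bbC)^r$ of real dimension equal to the complex dimension of the ambient manifold, and it is totally real because $\PGL(n,\bbR)$ is a totally real form of $\PGL(n,\bbC)$; so $V\cap\PGL(n,\bbR)^r$ is a maximal totally real submanifold of the connected complex manifold $V$. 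A holomorphic function vanishing on such a submanifold vanishes identically: in holomorphic coordinates near a real point the submanifold is $\bbR^m\subset\bbC^m$, a convergent power series vanishing on $\bbR^m$ has all coefficients zero, and the vanishing then propagates over connected $V$ by the identity theorem. Applying this to $h-1$ gives $h\equiv 1$ on $V$. Finally, taking logarithms along a path in $V$ from $\rho_0$ to $\rho$ yields a matched choice of branches on the two sides; different paths alter the branches by elements of $2\pi i\bbZ$ (the monodromy discussed in \S 6), so the identity holds modulo $2\pi i$. Absolute convergence of the series is precisely Proposition 5.4 for $\rho\in\mathcal{S}_{<1}$.

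I expect the real difficulty to lie not in this deduction but in its analytic inputs: Proposition 5.4, namely the Pozzetti-Sambarino-Wienhard comparison showing that the sizes of the terms $\log C_\rho(\cdots)$ are governed by the ratios $\frac{\sigma_2}{\sigma_1}(\rho(w))$ and hence are summable once $\operatorname{H.dim}\zeta^1(\partial\pi_1\Sigma)=h_\rho^{a_1}<1$; and the uniform‑on‑compacta strengthening in Proposition 5.7, which is what upgrades pointwise convergence of $h$ to holomorphy. Within the argument above the only delicate bookkeeping is the branch/monodromy issue for $\log$, and routing everything through the single‑valued function $h$ is designed precisely to postpone that to the last line.
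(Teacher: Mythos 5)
Your proposal is correct and follows the same overall strategy as the paper --- establish holomorphy of both sides on $\mathcal{S}_{<1}$ via the convergence propositions, invoke the real identity of Theorem \ref{thm_realhigherBas}, and propagate by analytic continuation along the path component of $\rho_0$ --- but your execution differs in two places, and in both you are more careful than the printed proof. First, the paper works directly with the multivalued $\log$ and asserts that ``each side is a holomorphic function on $\mathcal{S}_{<1}$ (up to $2\pi i$)'', whereas you exponentiate to the single-valued quotient $h$ and only reintroduce branches of $\log$ in the last step; this cleanly isolates the monodromy issue and makes the statement ``mod $2\pi i$'' transparent rather than implicit. Second, the paper's proof claims the identity ``holds on an open subset consisting of real $(1,1,2)$-hyperconvex Anosov representations'' and hence on a full neighbourhood; the real locus is of course not open in the complex character variety, and the correct justification is exactly the one you supply, namely that $V\cap\PGL(n,\bbR)^r$ is a maximal totally real submanifold of $V$, so a holomorphic function vanishing there vanishes identically on the connected component. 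Both arguments consume the same inputs (Proposition \ref{prop_conv} for absolute convergence, Proposition \ref{cor_unif_conv} for uniform convergence on compacta, holomorphy of the limit maps and of $\lambda_1/\lambda_n$, and positivity of the real cross ratios from Lemma \ref{lem_pos}), so nothing new is needed beyond what the paper already proves; your multiplicative reformulation just buys a cleaner bookkeeping of branches, at the small cost of having to check that the infinite product is nonvanishing, which you correctly reduce to the summability of $\sum|\log C_\rho(\cdots)|$.
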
 
We obtain the complexified identity via analytic continuation as we deform a real $(1,1,2)$-hyperconvex Anosov representation into the $\PGL(n,\bbC)$-character variety. 

The main technical problem for analytic continuation to work is the convergence issue of the right-hand-side series in our identity. It turns out that for a $(1,1,2)$-hyperconvex Anosov representations $\rho : \pi_1\Sigma \to \PGL(n,\bbC)$, the right-hand-side series is absolutely convergent if the Hausdorff dimension of the limit set $\zeta^1_{\rho}(\partial\pi_1\Sigma)$ is strictly smaller than one. Denote by $\mathcal{S}_{<1}$ the subspace of $\rho \in \mathcal{S}$ such that the Hausdorff dimension of the limit set is strictly smaller than one. We prove that the right-hand-side series is uniformly convergent on compact subsets of $\mathcal{S}_{<1}$ and therefore defines a holomorphic function on $\mathcal{S}_{<1}$.

The proof of absolute convergence of the series uses techniques developed by Pozzetti-Sambarino-Wienhard. We show that if the double coset representative $w$ is a long word, then the absolute value of the term associated to $w$ in the series is bounded above by the diameter of $\zeta^1(w\mathcal{C}_{\infty}(w) )$, which is bounded above by some constant multiple of $\frac{\sigma_2}{\sigma_1}\rho(w)$ by Pozzetti-Sambarino-Wienhard.

\subsection{The left-hand-side of the identity}
First we show that the left-hand-side of the identity behaves well under deformation.
\begin{lem}\label{lem_lhs}
	Let $\rho_0: \pi_1\Sigma \to \PGL(n, \bbR)$ be a real $(1,1,2)$-hyperconvex Anosov representation with $\ell_{\rho}(\rho(g))$ real positive for all $g \in \pi_1\Sigma$. Then Re$(\ell_{\rho}(\rho_t(g)))>0$ for all $g \in \pi_1\Sigma$ when analytically continued along a path $\rho_t$ in $\mathcal{S}$.
\end{lem}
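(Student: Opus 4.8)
The plan is to exploit the fact that the complex length $\ell_\rho(\rho(g)) = \log\!\big(\lambda_1(\rho(g))/\lambda_n(\rho(g))\big)$ depends only on the ratio of the largest and smallest eigenvalue moduli, so that $\mathrm{Re}\,\ell_\rho(\rho(g)) = \log\frac{|\lambda_1(\rho(g))|}{|\lambda_n(\rho(g))|}$, and to show this quantity never vanishes along a path inside $\mathcal{S}$. First I would recall that for a projective Anosov representation the limit map $\zeta^1$ together with its transversality to $\zeta^{n-1}$ forces, for every nontrivial $g$, a uniform gap $\lambda_1(\rho(g)) > \lambda_n(\rho(g))$ in modulus: the attracting fixed point $g^+ \in \partial\pi_1\Sigma$ is sent by $\zeta^1$ to an attracting line of $\rho(g)$ and $\zeta^{n-1}(g^-)$ to a complementary repelling hyperplane, and Anosovness gives that these are the top and bottom of the singular/eigenvalue spectrum with a definite gap. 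Hence $\mathrm{Re}\,\ell_\rho(\rho_t(g)) = \log\frac{|\lambda_1|}{|\lambda_n|}(\rho_t(g)) > 0$ for every $t$ such that $\rho_t$ is projective Anosov.

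The key point is therefore that projective Anosovness — indeed $(1,1,2)$-hyperconvexity — is preserved along the whole path $\rho_t$. This is exactly the openness statement recalled in Section 2: the set of $(1,1,2)$-hyperconvex representations is open in $\mathrm{Hom}(\pi_1\Sigma, \PGL(n,\bbC))$, and $\mathcal{S}$ is by definition the (open) subset of the character variety consisting of such representations. Since the path $\rho_t$ is assumed to lie in $\mathcal{S}$, each $\rho_t$ is $\{a_1\}$-Anosov, so the eigenvalue gap above holds for every $t\in[0,1]$ and every nontrivial $g$. Thus $t\mapsto \mathrm{Re}\,\ell_\rho(\rho_t(g))$ is a continuous, nowhere-zero function on $[0,1]$ which is positive at $t=0$ (that is the hypothesis, since $\rho_0$ is real with real positive length spectrum, so $|\lambda_1/\lambda_n| = \lambda_1/\lambda_n > 1$), hence positive throughout.

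To make the argument clean I would organize it as: (i) write $\mathrm{Re}\,\ell_\rho(\rho_t(g)) = \log|\lambda_1(\rho_t(g))| - \log|\lambda_n(\rho_t(g))|$, which is legitimate independently of the branch of $\log$ chosen to define $\ell_\rho$ itself, since changing the branch shifts $\ell_\rho$ by $2\pi i\bbZ$ and does not affect the real part; (ii) observe $t\mapsto \rho_t(g)$ is continuous into $\PGL(n,\bbC)$ and the quantity $\log|\lambda_1| - \log|\lambda_n|$ is a continuous function of a matrix whose top and bottom eigenvalue moduli stay separated — which holds because $\rho_t$ is projective Anosov, so $\rho_t(g)$ is proximal with a spectral gap (its attracting line is $\zeta^1_{\rho_t}(g^+)$, varying continuously by the continuity of limit maps recalled in Section 2); (iii) conclude by connectedness of $[0,1]$. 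The main obstacle — and the only thing that needs the hypothesis $\rho_t\in\mathcal{S}$ rather than just $\rho_t\in\mathrm{Hom}$ — is ensuring that no eigenvalue collision at the top or bottom of the spectrum occurs along the path, which is precisely what the Anosov property rules out; once that is in hand the statement is an elementary continuity argument.
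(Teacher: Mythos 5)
Your proposal is correct and follows essentially the same route as the paper, whose proof is the one-line observation that $\mathrm{Re}(\ell_{\rho}(\rho_t(g)))=0$ would contradict projective Anosovness; you have simply spelled out why (the eigenvalue gap $|\lambda_1|>|\lambda_n|$ forced by proximality, persistence of the Anosov property along a path in $\mathcal{S}$, and continuity in $t$). No gaps.
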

\begin{proof}
	If there were some $t$ and some $g \in \pi_1\Sigma$ such that Re$(\ell_{\rho}(\rho_t(g)))=0$, then it would contradict the fact that the representation is projective Anosov.
\end{proof}

\subsection{Convergence theorems}
In this subsection, we first give a convergence criterion which gives us a subset of $\mathcal{S}$ on which the right-hand-side series makes sense. Our criterion states that the series in (\ref*{eq_alg}) converges absolutely if the Hausdorff dimension of the limit set $\zeta^{1}(\partial\pi_1\Sigma)$ is strictly less than one. The idea of the proof is to show that the tail of our series is bounded above by some constant multiple of the tail of the Poincar\'e series $\eta(s)$ as in (\ref{eq_poincare}). Then we prove that the right-hand-side series is uniformly continuous on compact subsets of $\mathcal{S}_{<1}$.

\begin{prop}[Convergence criterion] \label{prop_conv}
If $\rho: \pi_1\Sigma \to \text{PGL}(n,\bbC)$ is a $(1,1,2)$-hyperconvex Anosov representation, the series in (\ref*{eq_alg}) converges absolutely if the Hausdorff dimension of the limit set $\zeta^{1}(\partial\pi_1\Sigma)$ is strictly less than one.
\end{prop}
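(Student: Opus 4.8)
The plan is to bound the absolute value of each term in the right-hand-side series by the diameter of the projective set $\zeta^1(w\mathcal{C}_{\infty}(w))$, and then to compare the resulting sum with a tail of the Poincar\'e series $\eta(s)$ evaluated at $s=1$, which converges because the critical exponent $h_{\rho}^{a_1}$ equals the Hausdorff dimension of the limit set and is assumed to be strictly less than one. First I would fix a boundary pair $(j,q)$ and work with the RedLex double coset representatives $w \in \mathcal{L}_{j,q}$; since distinct representatives have pairwise growing word length, it suffices to control the terms indexed by $w$ with $|w| \ge T$ for $T$ large and to observe that only finitely many short words are omitted. For such a long $w$, the key geometric observation is that the four points $\alpha_j^+$, $\alpha_j^-$, $w\cdot\alpha_q^+$, $w\cdot\alpha_q^-$ have the property that $\zeta^1(w\cdot\alpha_q^{\pm})$ both lie in $\zeta^1(w\mathcal{C}_{\infty}(w))$, because $\alpha_q^{\pm} \in \mathcal{C}_{\infty}(w)$ once $w$ is a long enough word ending in a letter compatible with the cone type of $\alpha_q^{\pm}$ (this is where the structure of cone types at infinity for a free group, recorded above, enters). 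Hence the ``moving'' pair of arguments of the cross ratio is confined to a set of diameter at most $\frac{2}{\delta}\frac{\sigma_2}{\sigma_1}(\rho(w))$ by Lemma \ref{lem_diam}.

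Next I would translate this diameter bound into a bound on $|\log C_\rho(\alpha_j^+,\alpha_j^-;w\cdot\alpha_q^+,w\cdot\alpha_q^-)|$. Using the explicit formula for the FGW-cross ratio in Lemma \ref{lem_cr_FGW}, namely
\[
C_{\rho}(x,y,u,v) = \frac{\bar{\varphi}(\bar{\omega})\,\bar{\varphi}'(\bar{\omega}')}{\bar{\varphi}(\bar{\omega}')\,\bar{\varphi}'(\bar{\omega})}
\]
with $\varphi = \zeta^{n-1}(x)$, $\varphi' = \zeta^{n-1}(y)$, $\omega = \zeta^1(u)$, $\omega' = \zeta^1(v)$, one sees that when $\omega$ and $\omega'$ are close in $\bbP(\bbC^n)$ the numerator and denominator are close, so $C_\rho$ is close to $1$; quantitatively, $|C_\rho - 1|$ is $O(d(\zeta^1(u),\zeta^1(v)))$ with a constant depending only on $\rho$ (through a lower bound on $|\bar\varphi(\bar\omega)|$, which is bounded below uniformly by the transversality built into the Anosov property and compactness of $\partial\pi_1\Sigma$). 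Since $\log$ is Lipschitz near $1$, this yields
\[
\bigl|\log C_{\rho}(\alpha_j^+,\alpha_j^-;w\cdot\alpha_q^+,w\cdot\alpha_q^-)\bigr| \le c_2\, d\bigl(\zeta^1(w\cdot\alpha_q^+),\zeta^1(w\cdot\alpha_q^-)\bigr) \le c_3\,\frac{\sigma_2}{\sigma_1}(\rho(w))
\]
for some constants $c_2, c_3$ depending only on $\rho$, valid for all $w$ with $|w|$ large enough.

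Summing, the tail of the series is bounded by $c_3 \sum_{|w|\ge T}\frac{\sigma_2}{\sigma_1}(\rho(w))$ over double coset representatives, which is dominated by $c_3 \sum_{\gamma\in\pi_1\Sigma}\frac{\sigma_2}{\sigma_1}(\rho(\gamma)) = c_3\,\eta(1)$. By the theorem of Pozzetti-Sambarino-Wienhard quoted above, $h_{\rho}^{a_1} = \mathrm{H.dim}(\zeta^1(\partial\pi_1\Sigma)) < 1$, so $\eta(s)$ converges at $s=1$; hence the tail, and therefore the whole series, converges absolutely. Finitely many short-word terms contribute a finite amount and do not affect convergence. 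I expect the main obstacle to be the bookkeeping in the first step: making precise the claim that $\alpha_q^{\pm}$ lands in $\mathcal{C}_{\infty}(w)$ for RedLex representatives of long double cosets, since one must choose the double coset representative (or pass to an equivalent one) whose terminal segment matches the cone type containing $\alpha_q^{\pm}$, and one must handle the finitely many exceptional $(j,q)$-pairs and short words separately. A secondary technical point is ensuring the constant relating $|\log C_\rho - 0|$ to the projective distance is uniform; this requires a uniform positive lower bound on $|\bar\varphi(\bar\omega)|$ over the (compact) limit sets in $\bbP(\bbC^n)$ and $\bbP(\bbC^{n*})$, which follows from the Anosov transversality condition $\zeta^{n-1}(x)\oplus\zeta^1(u) = \bbC^n$ for $x\neq u$ together with compactness of $\partial\pi_1\Sigma$.
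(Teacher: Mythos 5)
Your proposal is correct and follows essentially the same route as the paper: bound $\bigl|\log C_{\rho}\bigr|$ by a constant times the projective distance $d(\zeta^1(w\cdot\alpha_q^+),\zeta^1(w\cdot\alpha_q^-))$ (using uniform transversality and compactness for the constants), bound that distance by $\text{diam}\,\zeta^1(w\mathcal{C}_{\infty}(w)) \lesssim \frac{\sigma_2}{\sigma_1}(\rho(w))$ via Lemma \ref{lem_diam}, and conclude by comparison with the Poincar\'e series at $s=1$ using the Pozzetti--Sambarino--Wienhard equality of critical exponent and Hausdorff dimension. The only cosmetic difference is that the paper derives the estimate $|\log C_{\rho}| \asymp d(w^+,w^-)$ by putting the cross ratio in projective coordinates on the line $(\omega\omega')$ rather than working directly from the algebraic FGW formula.
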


The following two lemmas constitute the proof of Proposition \ref{prop_conv}. We start with some notations. Fix two integers $j, q \in \{1, \cdots, k\}$. Denote by $\varphi = \zeta^{n-1}_{\rho}(\alpha_j^+), \varphi' = \zeta^{n-1}_{\rho}(\alpha_j^-), w^+ = \omega = \zeta^1_{\rho}(w \cdot \alpha_q^+)$ and $w^- = \omega' = \zeta^1_{\rho}(w \cdot \alpha_q^-)$. Then by definition, $$C_{\rho}(\alpha_j^+,\alpha_j^-;w\cdot \alpha_q^+,w \cdot \alpha_q^-) =[\varphi, \varphi', w^+, w^-]_{FGW}.$$

Since $\varphi \neq \varphi'$, we conjugate the representation and choose a basis $\{e_m\}_{m=1}^{n}$ of $\bbC^n$ such that the lifts (in $\bbC^{n*}$ and $\bbC^{n}$) $\bar{\varphi}, \bar{\varphi}', \bar{\varphi}_{\omega,\omega'}$ and $\bar{\varphi}'_{\omega,\omega'}$ are as follows:
$$\bar{\varphi} = e_1^*, \bar{\varphi}' = e_2^*, \bar{\varphi}_{\omega,\omega'} = e_2 + \bar{u}, \bar{\varphi}'_{\omega,\omega'} = e_1 + \bar{u}'$$
where $\bar{u}$ and $\bar{u}'$ are vectors in Span$(e_3, \cdots, e_n)$. Note that $\bar{\varphi}_{\omega,\omega'}$ and $\bar{\varphi}'_{\omega,\omega'}$ form a basis for the plane spanned by $\bar{\omega}$ and $\bar{\omega}'$.

Therefore $\bar{\omega}$ and $\bar{\omega}'$ can be written as
$$\bar{\omega} =  \lambda'\bar{\varphi}'_{\omega,\omega'} + \lambda\bar{\varphi}_{\omega,\omega'} \text{ and } \bar{\omega}' = \mu\bar{\varphi}'_{\omega,\omega'} + \mu'\bar{\varphi}'_{\omega,\omega'}.$$

With respect to the coordinate basis $\{\bar{\varphi}'_{\omega,\omega'}, \bar{\varphi}_{\omega,\omega'}\}$, in the projective line $(\omega\omega')$, $\varphi_{\omega,\omega'}$ has projective coordinate $[1, \infty = 1/0]$, $\varphi'_{\omega,\omega'} = [1, 0]$, $w^+ = \omega = [\lambda',\lambda] = [1; \lambda/\lambda']$ and $w^+ = \omega = [\mu',\mu] = [1; \mu/\mu']$.

Therefore,
$$C_{\rho}(\alpha_j^+,\alpha_j^-;w\cdot \alpha_q^+,w \cdot \alpha_q^-) = [\varphi_{\omega,\omega'}, \varphi'_{\omega,\omega'}, w^+, w^-] = \frac{w^-}{w^+}$$
where $w^+ = \lambda/\lambda'$ and $w^- = \mu/ \mu'$.

\begin{lem}\label{lem_logd}
	Take the principal branch of $\log$. If $w \in \mathscr{L}_{j,q}$ with $|w|$ large, then
	$$\biggr|\log \dfrac{w^-}{w^+}\biggr| \asymp d(w^+, w^-)$$ where $d$ is the distance on $\bbP(\bbC^n)$ (see subsection \ref{subsec_dgrass}).
\end{lem}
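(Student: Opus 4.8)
The plan is to localise the estimate at the diagonal $\omega=\omega'$, where $\omega=\zeta^1_{\rho}(w\cdot\alpha_q^+)$ and $\omega'=\zeta^1_{\rho}(w\cdot\alpha_q^-)$ (so that $d(w^+,w^-)=d(\omega,\omega')$ in the notation of the statement), and to compare the two sides through their common linearisation there. Fix unit lifts $\hat\varphi,\hat\varphi'$ of $\varphi=\zeta^{n-1}(\alpha_j^+)$, $\varphi'=\zeta^{n-1}(\alpha_j^-)$ and $\hat\omega,\hat\omega'$ of $\omega,\omega'$. By Lemma~\ref{lem_cr_FGW},
\[
\frac{w^-}{w^+}=C_{\rho}(\alpha_j^+,\alpha_j^-;w\cdot\alpha_q^+,w\cdot\alpha_q^-)=\frac{\hat\varphi(\hat\omega)\,\hat\varphi'(\hat\omega')}{\hat\varphi(\hat\omega')\,\hat\varphi'(\hat\omega)},\qquad\text{so}\qquad\frac{w^-}{w^+}-1=\frac{(\varphi\wedge\varphi')(\hat\omega,\hat\omega')}{\hat\varphi(\hat\omega')\,\hat\varphi'(\hat\omega)},
\]
where $(\varphi\wedge\varphi')(a,b)=\hat\varphi(a)\hat\varphi'(b)-\hat\varphi(b)\hat\varphi'(a)$; this alternating $2$-form annihilates a nonzero decomposable bivector $a\wedge b$ exactly when $\operatorname{span}(a,b)$ meets $W:=\zeta^{n-1}(\alpha_j^+)\cap\zeta^{n-1}(\alpha_j^-)$.

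Next I would record the uniformity in $w$. A $\mathrm{RedLex}$ representative $w$ of $H_jwH_q$ is length-minimal in its double coset, so (taking $\alpha_j,\alpha_q$ cyclically reduced) its reduced form has no prefix and no suffix equal to a nonzero power of $\alpha_j$, resp.\ of $\alpha_q$; hence the attracting fixed point of $w$ on $\partial\pi_1\Sigma$ stays out of a fixed neighbourhood of $\{\alpha_j^{\pm}\}$, and the repelling fixed point of $w$ stays out of a fixed neighbourhood of $\{\alpha_q^{\pm}\}$, uniformly over all such $w$. By the uniform contraction dynamics of the $\pi_1\Sigma$-action on $\partial\pi_1\Sigma$, the points $w\cdot\alpha_q^{+},w\cdot\alpha_q^{-}$ --- which lie at bounded distance from each other and at definite distance from the repelling fixed point of $w$ --- are carried into a shrinking ball around the attracting fixed point of $w$; thus $d(w\cdot\alpha_q^+,w\cdot\alpha_q^-)\to0$ and $w\cdot\alpha_q^{\pm}$ lie in a fixed compact $K\subset\partial\pi_1\Sigma\setminus\{\alpha_j^{\pm}\}$, uniformly as $|w|\to\infty$; by H\"older continuity of $\zeta^1$, also $d(\omega,\omega')\to0$ uniformly. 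Since $\zeta^1(\xi)\oplus\zeta^{n-1}(\alpha_j^{\pm})=\bbC^n$ for $\xi\neq\alpha_j^{\pm}$ (projective Anosov) and $K$ is compact, the factors $|\hat\varphi(\hat\omega')|,|\hat\varphi'(\hat\omega)|$ are bounded above and below by constants depending only on $\rho,j$ once $|w|$ is large; combined with $d(\omega,\omega')\to0$ this forces $w^-/w^+\to1$ uniformly, and hence, on the principal branch of $\log$ and for $|w|$ large,
\[
\Bigl|\log\tfrac{w^-}{w^+}\Bigr|\ \asymp\ \Bigl|\tfrac{w^-}{w^+}-1\Bigr|\ \asymp\ \bigl|(\varphi\wedge\varphi')(\hat\omega,\hat\omega')\bigr|.
\]

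It remains to compare $\bigl|(\varphi\wedge\varphi')(\hat\omega,\hat\omega')\bigr|$ with $\|\hat\omega\wedge\hat\omega'\|=d(\omega,\omega')$. The bound $\bigl|(\varphi\wedge\varphi')(\hat\omega,\hat\omega')\bigr|\le\|\varphi\wedge\varphi'\|\cdot\|\hat\omega\wedge\hat\omega'\|$ is immediate, and already gives $\bigl|\log(w^-/w^+)\bigr|\lesssim d(w^+,w^-)$ --- the inequality actually fed into the comparison with the Poincar\'e series in Proposition~\ref{prop_conv}. For the reverse inequality one needs the secant plane $\Pi_w:=\zeta^1(w\cdot\alpha_q^+)\oplus\zeta^1(w\cdot\alpha_q^-)$ to be \emph{uniformly} transverse to $W$, and this is where hyperconvexity enters. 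On one hand, for a $(1,1,2)$-hyperconvex representation the secant planes $\zeta^1(x)\oplus\zeta^1(y)$ converge to the tangent plane $\zeta^2(x)$ as $y\to x$, uniformly by compactness (cf.\ \cite{PSW}); since $d(w\cdot\alpha_q^+,w\cdot\alpha_q^-)\to0$, this keeps $\Pi_w$ close to $\zeta^2(w\cdot\alpha_q^+)$, uniformly as $|w|\to\infty$. On the other hand, $(1,1,2)$-hyperconvexity of $\rho$ --- equivalently, of the dual representation $\rho^{*}$ (see \cite{PSW}) --- gives $\zeta^2(\xi)\cap\zeta^{n-1}(\alpha_j^+)\cap\zeta^{n-1}(\alpha_j^-)=\{0\}$, i.e.\ $\zeta^2(\xi)\cap W=\{0\}$, for every $\xi\neq\alpha_j^{\pm}$. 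Compactness of $K$ upgrades this to a uniform positive lower bound on the angle between $\zeta^2(w\cdot\alpha_q^+)$ and $W$, hence between $\Pi_w$ and $W$, hence on $\bigl|(\varphi\wedge\varphi')(\hat\omega,\hat\omega')\bigr|/\|\hat\omega\wedge\hat\omega'\|$; the lemma follows.

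The hard part is precisely this last uniform transversality: everything else is either a formal cross-ratio computation or a standard compactness/contraction argument, whereas controlling the secant planes $\Pi_w$ near the diagonal rests on two genuine inputs about $(1,1,2)$-hyperconvex limit curves --- the convergence of secants $\zeta^1(x)\oplus\zeta^1(y)$ to the tangent planes $\zeta^2(x)$, and the transversality of these tangent $2$-planes to the codimension-two subspace $W$ cut out by $\zeta^{n-1}(\alpha_j^{\pm})$ (the dual face of the hyperconvexity hypothesis) --- both of which are drawn from the Pozzetti--Sambarino--Wienhard analysis reviewed in Section~4. One must also be careful that every ``$\asymp$'' above is genuinely uniform in $w$, which is the purpose of the second paragraph.
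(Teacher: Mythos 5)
Your argument is correct and follows the same basic route as the paper's proof: expand $\log$ of the cross ratio about $1$, so that the lemma reduces to showing $|C_{\rho}-1|\asymp d(w^+,w^-)$ with constants uniform in $w$, the denominators being controlled by transversality and compactness. The difference lies in how the uniformity of the second ``$\asymp$'' is handled. The paper works in an affine chart on the secant line $(\omega\omega')$ built from the two points $\zeta^{n-1}(\alpha_j^{\pm})\cap(\omega\omega')$ and asserts that the coordinate difference $|w^+-w^-|$ is comparable to the projective distance, attributing the uniformity to the least angle between attractors and repellers and compactness of the limit set. You instead work invariantly with the $2$-form $\varphi\wedge\varphi'$ and isolate exactly what is needed for the lower bound: uniform transversality of the secant plane $\Pi_w=\zeta^1(w\cdot\alpha_q^+)\oplus\zeta^1(w\cdot\alpha_q^-)$ to the codimension-two subspace $W=\zeta^{n-1}(\alpha_j^+)\cap\zeta^{n-1}(\alpha_j^-)$ (note that only the upper bound, which is Cauchy--Schwarz, feeds into Proposition \ref{prop_conv}, but the lower bound is genuinely part of the stated equivalence, and fails precisely when $\Pi_w$ meets $W$, since then the two points $\varphi_{\omega,\omega'},\varphi'_{\omega,\omega'}$ coincide and the cross ratio degenerates to $1$). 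Your mechanism for this --- secants converging to the tangent planes $\zeta^2$, which are transverse to $W$ --- is the right one and makes explicit what the paper compresses into one sentence. Two points deserve care. First, your parenthetical that $(1,1,2)$-hyperconvexity of $\rho$ is ``equivalent'' to that of $\rho^*$ is doing real work: the conditions $(\zeta^1(x)\oplus\zeta^1(y))\cap\zeta^{n-2}(z)=\{0\}$ and $\zeta^2(z)\cap\zeta^{n-1}(x)\cap\zeta^{n-1}(y)=\{0\}$ are not formally the same statement, so you must cite the precise duality lemma from \cite{PSW} (or restrict to the Hitchin case, where self-duality is automatic). Second, your reduction of the uniform separation of $w\cdot\alpha_q^{\pm}$ from $\alpha_j^{\pm}$ to combinatorics of RedLex words is loose when $\alpha_j$ is not a generator; the cleaner route is to observe that each term depends only on the double coset (by $\rho$-invariance of $C_{\rho}$ under $\alpha_j$ and $\alpha_q$), so one may choose representatives for which the interval $(w\cdot\alpha_q^-,w\cdot\alpha_q^+)$ lies in a fixed compact fundamental domain for $\langle\alpha_j\rangle$ acting on $(\alpha_j^+,\alpha_j^-)$.
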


\begin{proof}
Since $\log(1+z) = \sum_{n=1}^{\infty}\dfrac{(-1)^{n+1}}{n}z^n$ for $|z|<1$, for $|z|$ small, $|\log(1+z)| \asymp |z|$. Therefore, for $|w|$ large, we have
	$$\biggr|\log \dfrac{w^-}{w^+}\biggr| = \biggr\rvert \log \left(1+ \dfrac{w^--w^+}{w^+}\right)\biggr\rvert \asymp \biggr| \dfrac{w^--w^+}{w^+}\biggr|.$$
The term $|w^+|$ is uniformly bounded for all $w$. Indeed, for each $w$, $|w^+|$ is bounded above and below as there is the least angle between repellers and attractors so that $w^+$ and $w^-$ can not be arbitrarily close to $\bar{\varphi}_{\omega,\omega'}$ and $\bar{\varphi}'_{\omega,\omega'}$, and the limit set is compact.

On the other hand, the projective distance $d(w^+, w^-)$ is comparable to the angle between $w^+$ and $w^-$ since $\sin x \asymp x$ for $x$ small. Moreover, on the projective line, the angle between $w^+$ and $w^-$ is comparable to $\left|w^+-w^-\right|$.
\end{proof}

The following lemma proves Proposition \ref{prop_conv}.
\begin{lem} \label{lem_subseries_1}
For each $j,q = 1, \cdots, k$, the series $$\displaystyle\sum_{w \in \mathscr{L}_{j,q}} w^+ - w^-$$ is absolutely convergent if the Hausdorff dimension of the limit set is strictly less than one.
\end{lem}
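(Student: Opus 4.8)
The plan is to reduce the convergence of $\sum_{w \in \mathscr{L}_{j,q}} (w^+ - w^-)$ to the convergence of the Poincar\'e series $\eta(s) = \sum_{\gamma} \left(\frac{\sigma_2}{\sigma_1}(\rho(\gamma))\right)^s$ at $s = 1$, which in turn is controlled by the hypothesis that $h_\rho^{a_1} = \text{H.dim}(\zeta^1(\partial\pi_1\Sigma)) < 1$. By Lemma \ref{lem_logd} it suffices to show that $\sum_{w} d(w^+, w^-)$ converges, where $w^\pm = \zeta^1_\rho(w \cdot \alpha_q^\pm)$. The key geometric observation is that for $w$ with $|w|$ large, both points $w \cdot \alpha_q^+$ and $w \cdot \alpha_q^-$ lie in the cone type at infinity $w\mathcal{C}_\infty(w)$ (since they are endpoints of geodesic rays from the identity whose initial segment is the geodesic word $w$), and therefore by Lemma \ref{lem_diam},
$$d(w^+, w^-) \le \text{diam}\, \zeta^1(w\mathcal{C}_\infty(w)) \le \frac{2}{\delta}\,\frac{\sigma_2}{\sigma_1}(\rho(w)).$$

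First I would make precise the claim that $w \cdot \alpha_q^\pm \in w\mathcal{C}_\infty(w)$ for long RedLex representatives $w$. Here I would use that $w$ is a reduced word and that $\alpha_q$ is a fixed boundary generator, so for $|w|$ large the concatenation $w$ followed by the infinite geodesic ray converging to $\alpha_q^\pm$ is, after bounded backtracking, again a geodesic ray; equivalently the fixed points $\alpha_q^\pm$ lie in one of the finitely many cone types $\mathcal{C}_\infty(g_i^{\pm 1})$ compatible with the last letter of $w$. (A small subtlety: one must treat separately the finitely many $w$ for which cancellation between $w$ and $\alpha_q$ is not negligible; these contribute only finitely many terms and do not affect convergence.) Second, having established the diameter bound, the sum over all $w \in \mathscr{L}_{j,q}$ of $d(w^+,w^-)$ is bounded by a constant times $\sum_{w \in \mathscr{L}_{j,q}} \frac{\sigma_2}{\sigma_1}(\rho(w)) \le \sum_{\gamma \in \pi_1\Sigma} \frac{\sigma_2}{\sigma_1}(\rho(\gamma)) = \eta(1)$, since the RedLex representatives are distinct group elements. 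Finally, since $h_\rho^{a_1} < 1$ by hypothesis, the critical exponent of $\eta$ is strictly less than $1$, so $\eta(1) < \infty$, which gives absolute convergence.

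I expect the main obstacle to be the first step: carefully justifying that the translated fixed points $w\cdot \alpha_q^\pm$ land inside the cone type at infinity $\mathcal{C}_\infty(w)$, uniformly for all but finitely many $w \in \mathscr{L}_{j,q}$. This requires a clean statement about geodesics in the free group $\pi_1\Sigma$: that appending the (eventually periodic) ray for $\alpha_q^\pm$ to a RedLex double-coset representative $w$ yields a ray that fellow-travels a genuine geodesic ray through $w$, so that its endpoint lies in $w\mathcal{C}_\infty(w)$. Once this combinatorial fact is in hand, the rest is a direct comparison with $\eta(1)$ using Lemmas \ref{lem_logd} and \ref{lem_diam} together with Theorem 4.1 (the equality of critical exponent and Hausdorff dimension). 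One should also note that Lemma \ref{lem_logd} only applies for $|w|$ large, but again this excludes only finitely many terms.
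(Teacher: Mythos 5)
Your proposal is correct and follows essentially the same route as the paper: reduce to $\sum_w d(w^+,w^-)$ via Lemma \ref{lem_logd}, bound this by $\mathrm{diam}\,\zeta^1(w\mathcal{C}_\infty(w))$ using the containment $w\cdot\alpha_q^\pm\in w\mathcal{C}_\infty(w)$ and Lemma \ref{lem_diam}, and compare with the Poincar\'e series at $s=1$, whose critical exponent is the Hausdorff dimension. Your extra care in justifying the containment of $w\cdot\alpha_q^\pm$ in the translated cone type (which the paper asserts only ``by definition'') is a welcome refinement rather than a departure.
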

\begin{proof} 
For any word $w\in \mathscr{L}_{j,q}$, by definition of cone type at infinity $\mathcal{C}_{\infty}(w)$, $w \cdot \alpha_q^+$ and $w \cdot \alpha_q^-$ are contained in $w\mathcal{C}_{\infty}(w)$. Then $w^+ = \zeta^1_{\rho}(w \cdot \alpha_q^+)$ and $w^- = \zeta^1_{\rho}(w \cdot \alpha_q^-)$ are contained in the set $\zeta^1_{\rho}(w\mathcal{C}_{\infty}(w)) \subset \bbP(\bbC^n)$. 

Then for $w$ with $|w| \ge T$ where $T$ is given in Lemma \ref{lem_cover}, we have
$$d(w^+, w^-) \le \text{diam}\zeta^1(w\mathcal{C}_{\infty}(w)) \le c_1\frac{\sigma_2}{\sigma_1}(\rho(w))$$
for some constant $c_1>0$. The last inequality follows from Lemma \ref{lem_diam}. 

Then it follows that $$\sum_{|w|\ge T} \left|\log \frac{w^-}{w^+}\right| \le c\sum_{|w|\ge T} d(w^+, w^-) \le cc_1\sum_{|w|\ge T}\frac{\sigma_2}{\sigma_1}(\rho(w)).$$
Therefore, if the Hausdorff dimension is strictly less than one, $\sum_{|w|\ge T}\frac{\sigma_2}{\sigma_1}(\rho(w))$ is finite and therefore the series is absolutely convergent.
\end{proof}

Next, we prove that the right-hand-side series is uniformly convergent on compact subsets of $\mathcal{S}_{<1}$.

\begin{prop} [Uniform convergence] \label{cor_unif_conv}
	Let $\rho_t: \pi_1\Sigma \to \PGL(n,\bbC), t \in [0,1]$ be a deformation of a real $(1,1,2)$-hyperconvex Anosov representation $\rho_0$ in $\mathcal{S}_{<1}$. Then the series in (\ref*{eq_alg}) is uniformly convergent on $[0,1]$.
\end{prop}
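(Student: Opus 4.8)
The plan is to establish uniform convergence on $[0,1]$ via the Cauchy criterion: I will show that for every $\varepsilon>0$ there is $N$ so that $\sum_{w\,:\,|w|\ge N}\bigl|\log C_{\rho_t}(\alpha_j^+,\alpha_j^-;w\cdot\alpha_q^+,w\cdot\alpha_q^-)\bigr|<\varepsilon$ for \emph{all} $t\in[0,1]$ simultaneously, the finitely many remaining terms being continuous functions of $t$ on the compact interval $[0,1]$. By Lemmas \ref{lem_logd}, \ref{lem_diam} and \ref{lem_subseries_1}, for each fixed $t$ and every $w$ with $|w|$ beyond the threshold $T$ of Lemma \ref{lem_cover} one has
$$\bigl|\log C_{\rho_t}(\alpha_j^+,\alpha_j^-;w\cdot\alpha_q^+,w\cdot\alpha_q^-)\bigr|\;\le\;c(t)\,\frac{\sigma_2}{\sigma_1}\bigl(\rho_t(w)\bigr),$$
where $c(t)$ absorbs the constant in $|\log(1+z)|\asymp|z|$, the lower bound on $|w^+|$ (coming from the least angle $\delta_{\rho_t}$ between attractors and repellers and from compactness of the limit set $\zeta^1_{\rho_t}(\partial\pi_1\Sigma)$), and the constant of Lemma \ref{lem_diam}. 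The proof therefore splits into (i) making $c(t)$ uniform in $t$, and (ii) bounding the tails of $\sum_w\frac{\sigma_2}{\sigma_1}(\rho_t(w))$ uniformly in $t$.

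For (i), along the path $\rho_t$ the limit maps $\zeta^1_{\rho_t}$ and $\zeta^{n-1}_{\rho_t}$ depend continuously on $t$ (in fact holomorphically, \cite{BCLS} Theorem 6.1, \cite{BPS}), and hence so do the translated boundary data $\zeta^1_{\rho_t}(w\cdot\alpha_q^\pm)$, the least angle $\delta_{\rho_t}$, the uniform constants in the Anosov property (the exponential $\sigma_1$--$\sigma_2$ gap), and the constant in Lemma \ref{lem_diam}; moreover the threshold $T$ of Lemma \ref{lem_cover} can be taken independent of $t$. Since $[0,1]$ is compact, all of these are bounded and those appearing in denominators are bounded away from $0$, so $c_0:=\sup_{t\in[0,1]}c(t)<\infty$ and we may replace $c(t)$ by $c_0$ uniformly.

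For (ii), by \cite{PSW} (Corollary 1.1) the critical exponent $h^{a_1}_{\rho_t}$ equals the Hausdorff dimension $\mathrm{H.dim}\bigl(\zeta^1_{\rho_t}(\partial\pi_1\Sigma)\bigr)$ and varies analytically, in particular continuously, with $t$. As the path stays in $\mathcal{S}_{<1}$, we have $h^{a_1}_{\rho_t}<1$ for every $t$, whence by compactness $s_\ast:=\sup_{t\in[0,1]}h^{a_1}_{\rho_t}<1$. Fix $s$ with $s_\ast<s<1$. The Poincar\'e series $\sum_{\gamma\in\pi_1\Sigma}\bigl(\tfrac{\sigma_2}{\sigma_1}(\rho_t(\gamma))\bigr)^s$ is organised, via the finite cone-type structure of the free group $\pi_1\Sigma$ and the coarse submultiplicativity of $\gamma\mapsto\tfrac{\sigma_2}{\sigma_1}(\rho_t(\gamma))$ (both with constants uniform on the compact family), by a transfer operator whose spectral radius $\Lambda_t$ is $<1$ precisely because $s>h^{a_1}_{\rho_t}$; since $\Lambda_t$ depends continuously on $t$, one has $r_\ast:=\sup_{t\in[0,1]}\Lambda_t<1$. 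Consequently there is a constant $A$ with $\sum_{|\gamma|=n}\bigl(\tfrac{\sigma_2}{\sigma_1}(\rho_t(\gamma))\bigr)^s\le A\,r_\ast^{\,n}$ for all $t$ and $n$. Since $\tfrac{\sigma_2}{\sigma_1}(\rho_t(w))\le 1$ and $0<s<1$, and since $\mathscr{L}_{j,q}\subset\pi_1\Sigma$,
$$\sum_{w\,:\,|w|\ge N}\frac{\sigma_2}{\sigma_1}\bigl(\rho_t(w)\bigr)\;\le\;\sum_{|\gamma|\ge N}\Bigl(\frac{\sigma_2}{\sigma_1}(\rho_t(\gamma))\Bigr)^s\;\le\;\frac{A\,r_\ast^{\,N}}{1-r_\ast}\;\xrightarrow[N\to\infty]{}\;0,$$
uniformly in $t$. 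Combined with (i), this yields the Cauchy estimate $\sum_{|w|\ge N}\bigl|\log C_{\rho_t}(\alpha_j^+,\alpha_j^-;w\cdot\alpha_q^+,w\cdot\alpha_q^-)\bigr|\le c_0 A r_\ast^{\,N}/(1-r_\ast)$ for all $t$ and all $N\ge T$, so the series in (\ref*{eq_alg}) converges uniformly on $[0,1]$.

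The main obstacle is precisely step (ii): upgrading the pointwise finiteness of Proposition \ref{prop_conv} to a bound on the tails that is uniform in $t$. This requires more than continuity of the Hausdorff dimension alone — one needs the Pozzetti--Sambarino--Wienhard machinery to depend continuously on $\rho$ at the quantitative level (continuity of the transfer operator, hence of its spectral radius), and one crucially uses the hypothesis that the deformation path remains inside $\mathcal{S}_{<1}$, which is what keeps $s_\ast$ strictly below $1$.
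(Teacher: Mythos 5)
Your argument is essentially correct, but it takes a genuinely different route from the paper. The paper's proof is much shorter: it dominates the absolute series termwise by the Poincar\'e series $\sum_{\gamma}\frac{\sigma_2}{\sigma_1}(\rho_t(\gamma))$ at exponent $s=1$, asserts that this dominating series is continuous in $t$ and hence uniformly bounded on the compact interval, and then invokes the Vitali-type principle that a uniformly bounded, pointwise convergent sequence of holomorphic functions converges locally uniformly. You instead run a uniform Cauchy criterion directly on the tails: (i) you make the comparison constants between $\bigl|\log C_{\rho_t}\bigr|$ and $\frac{\sigma_2}{\sigma_1}(\rho_t(w))$ uniform in $t$ by continuity of the limit maps and compactness of $[0,1]$, and (ii) you extract a uniform exponential decay rate for the sphere sums by exploiting $\sup_t h^{a_1}_{\rho_t}<1$ and interpolating with an exponent $s_\ast<s<1$ via $x\le x^s$ for $x\in[0,1]$. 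Your approach buys an explicit, quantitative tail bound and avoids the complex-analytic Vitali argument (which strictly speaking requires viewing $t$ as ranging over a complex parameter domain, not just $[0,1]$); the paper's approach is shorter but leaves the continuity of the $s=1$ Poincar\'e series --- which is itself close to the statement being proved --- unjustified.

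One caveat on your step (ii): the assertion that the Poincar\'e series is controlled by a transfer operator whose spectral radius $\Lambda_t$ is $<1$ and depends continuously on $t$ is the load-bearing claim, and as written it is asserted rather than proved; the paper develops no thermodynamic formalism you could cite for it. It can, however, be repaired with lighter tools: coarse submultiplicativity of $\gamma\mapsto\frac{\sigma_2}{\sigma_1}(\rho_t(\gamma))$ along geodesic concatenations (a standard consequence of the Anosov property, with constants locally uniform in $\rho$) makes $n\mapsto\log\bigl(C^s\sum_{|\gamma|=n}(\tfrac{\sigma_2}{\sigma_1}(\rho_t(\gamma)))^s\bigr)$ subadditive, so by Fekete the exponential growth rate $P_t(s)$ is an infimum of continuous functions of $t$, hence upper semicontinuous; since the uniform singular-value gap forces $P_t$ to be strictly decreasing in $s$, one gets $P_t(s)<0$ for $s>h^{a_1}_{\rho_t}$, and upper semicontinuity on the compact interval yields $\sup_t P_t(s)<0$, which is exactly your $r_\ast<1$. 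With that substitution your proof is complete and, if anything, more rigorous than the one in the paper.
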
 
\begin{proof}
	The proposition follows from the proof of the convergence theorem. Since the absolute series is uniformly bounded above by the Poincar\'e series $\sum_{\gamma \in \pi_1\Sigma} \frac{\sigma_2}{\sigma_1}(\rho_t(\gamma))$, which is continuous on $[0,1]$, it is uniformly bounded. A uniformly bounded sequence of holomorphic functions which converges pointwise (by Proposition \ref{prop_conv}) converges uniformly.
\end{proof}

\subsection{Analytic continuation}
\begin{proof}[Proof of Theorem \ref{thm_cxhigherBas}]
	Let $\rho_t$ in $\mathcal{S}_{<1}$ be an analytic deformation of $\rho_0$. By Lemma \ref{lem_lhs}, the analyticity (in $\rho$) of limit maps $\zeta_{\rho}^1, \zeta_{\rho}^{n-1}$ (see \cite{BCLS}, section 6 or \cite{BPS} section 6) and Proposition \ref{cor_unif_conv}, each side of identity \ref*{eq_alg} is a holomorphic function on $\mathcal{S}_{<1}$ (up to $2\pi i$). In a neighbourhood $U$ of $\rho_0$, the identity holds on an open subset consisting of real $(1,1,2)$-hyperconvex Anosov representations. Therefore it holds on the entire neighbourhood $U$. Hence, by analytic continuation, the identity holds for all $\rho_t$ (up to $2\pi i$).
\end{proof}

\section{Monodromy and topology of the character variety}
In this section, we discuss one application of our identities to the topology of character varieties. 
As we analytically continue (\ref*{eq_alg}) along a loop in $\mathcal{S}_{<1}$ the value of the right-hand-side might change by multiples of $2\pi i$. We call this the {\it monodromy} of the loop and observe that it depends only on its homotopy class in $\mathcal{S}_{<1}$. Note that the uniform convergence throughout a compact loop in $\mathcal{S}_{<1}$ implies that only finitely many words can change monodromy. We will exhibit two loops in $\mathcal{S}_{<1}$ that are not homotopically equivalent.

Let $F_2 = \langle a,b \rangle$ be a free group on two generators with $A = a^{-1}$ and $B = b^{-1}$. Let $L,x \in \bbC$ be such that $x+1/x = -L$. Consider the Schottky groups $\Gamma_L \defeq \rho(\langle a,b \rangle) = \langle X_L, Y_L \rangle$ and $\Gamma'_L \defeq \rho'(\langle a,b \rangle) = \langle X_L^2, X_LY_L^3 \rangle$, where
$$X_L = \left[ \begin{array}{cc}
L & 1 \\
-1 & 0 \end{array} \right] \text{ , } 
Y_L = \left[ \begin{array}{cc}
0 & x \\
-\frac{1}{x} & L \end{array} \right].$$

In \cite{He}, we exhibited two different loops in the $\text{PSL}(2,\bbC)$-character variety of $F_2$, $\Gamma_{L_t}$ and $\Gamma'_{L_t}$ by letting $$L_t = 5e^{2\pi it}, t \in [0,1]$$ such that $\Gamma_{L_0} = \Gamma_{L_1}$ and $\Gamma'_{L_0} = \Gamma'_{L_1}$ are Fuchsian, and for each $t \in [0,1]$, the Hausdorff dimension of limit sets $\Lambda_{\Gamma_{L_t}}$ and $\Lambda_{\Gamma'_{L_t}}$ are both strictly smaller than one. 

However, these two loops are not homotopically equivalent and the homotopy classes of these loops in $\mathcal{S}ch_{<1}$ are distinguished by their monodromy given in Table 1 in \cite{He}. Here $\mathcal{S}ch_{<1}$ denotes the space of Schottky groups whose limit set has Hausdorff dimension smaller than one.

Now let $\iota : \PSL(2,\bbC) \to \PSL(3,\bbC)$ be the unique irreducible representation, i.e.
\[ \begin{bmatrix}
a & b \\
c & d 
\end{bmatrix}
\mapsto
\begin{bmatrix}
a^2 & ac & c^2 \\
2ab & ad+bc & 2cd \\
b^2 & bd & d^2
\end{bmatrix} \]

Consider the compositions $\iota \circ \Gamma_{L_t}$ and $\iota \circ \Gamma'_{L_t}, t \in [0,1]$.
\begin{lem}
 $\gamma = \iota \circ \Gamma_{L_t}$ and $\gamma' =\iota \circ \Gamma'_{L_t}, t \in [0,1]$ are two loops  in $\mathcal{S}_{< 1}$. 
\end{lem}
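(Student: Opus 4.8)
The plan is to show two things for each of the loops $\gamma$ and $\gamma'$: that every representation along the loop is $(1,1,2)$-hyperconvex Anosov, and that its limit set has Hausdorff dimension strictly smaller than one. For the first point, I would use the fact that $\Gamma_{L_t}$ and $\Gamma'_{L_t}$ are discrete faithful (Schottky) representations into $\PSL(2,\bbC)$ for every $t$, hence in particular projective Anosov, and that the irreducible representation $\iota : \PSL(2,\bbC) \to \PSL(3,\bbC)$ sends Anosov representations to Anosov representations (the limit maps are obtained by composing with the $\iota$-equivariant Veronese embedding $\bbP(\bbC^2) \hookrightarrow \bbP(\bbC^3)$ and its dual). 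The $(1,1,2)$-hyperconvexity of $\iota \circ \rho$ then follows because the Veronese embedding is hyperconvex: for three pairwise distinct points $x,y,z$ on the conic, the line $\zeta^1(x) \oplus \zeta^1(y)$ is a genuine secant line of the conic and so meets the tangent-derived hyperplane $\zeta^{n-2}(z) = \zeta^{1}(z)$ only at $\{0\}$; this is exactly the statement that a Hitchin (here $n$-Fuchsian, hence Hitchin) representation is $(p,q,r)$-hyperconvex for $p+q=r$, which is recorded earlier in the excerpt (Labourie \cite{Lab06}, or \cite{PSW} Theorem 9.5).

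Next I would control the Hausdorff dimension. The key observation is that for a representation of the form $\iota \circ \rho$ with $\rho$ Schottky into $\PSL(2,\bbC)$, the limit set $\zeta^1(\partial F_2)$ in $\bbP(\bbC^3)$ is the image under the Veronese embedding of the classical limit set $\Lambda_{\rho} \subset \bbP(\bbC^2) = \widehat{\bbC}$. Since the Veronese embedding is a smooth (bi-Lipschitz onto its image) embedding of $\bbP(\bbC^2)$, it does not change Hausdorff dimension, so
\[
\operatorname{H.dim}\bigl(\zeta^1(\partial F_2)\bigr) = \operatorname{H.dim}(\Lambda_{\rho}).
\]
By the construction recalled from \cite{He}, for every $t \in [0,1]$ the limit sets $\Lambda_{\Gamma_{L_t}}$ and $\Lambda_{\Gamma'_{L_t}}$ have Hausdorff dimension strictly smaller than one, so the same holds for $\iota \circ \Gamma_{L_t}$ and $\iota \circ \Gamma'_{L_t}$. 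Hence each representation along either loop lies in $\mathcal{S}_{<1}$.

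Finally, continuity of the two paths in $\mathcal{S}_{<1}$ is immediate: $t \mapsto L_t = 5e^{2\pi i t}$ is continuous, the matrix entries of $X_{L_t}, Y_{L_t}$ depend continuously (indeed holomorphically) on $L_t$ (choosing a continuous branch of $x$ with $x + 1/x = -L_t$ along the loop, which is possible since $L_t$ avoids $\pm 2$), and $\iota$ is a fixed polynomial map; composing gives continuous paths in $\operatorname{Hom}(F_2, \PSL(3,\bbC))$, and since their images lie in $\mathcal{S}_{<1}$ they are loops in $\mathcal{S}_{<1}$ (closing up because $\Gamma_{L_0} = \Gamma_{L_1}$ and $\Gamma'_{L_0} = \Gamma'_{L_1}$). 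The main obstacle I anticipate is not any single hard estimate but rather assembling the hyperconvexity statement cleanly: one must be slightly careful that $\iota \circ \rho$ is genuinely $n$-Fuchsian (so that the hyperconvexity theorem applies) rather than merely projective Anosov, and that the dimension-preservation under the Veronese embedding is stated with the correct projective metric $d$ from subsection \ref{subsec_dgrass}; both are routine once spelled out, since the Veronese map is a diffeomorphism onto a smooth submanifold and hence locally bi-Lipschitz.
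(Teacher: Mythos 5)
Your proposal is correct and follows essentially the same route as the paper: closing up and continuity of the loops, $(1,1,2)$-hyperconvexity of $\iota\circ\rho$ via the ($n$-Fuchsian/Hitchin) hyperconvexity of the Veronese embedding, and preservation of Hausdorff dimension under $\iota$. You merely spell out details the paper leaves implicit (the secant-line transversality of the Veronese conic and the locally bi-Lipschitz nature of the embedding in place of the paper's ``$\iota$ is algebraic''), which is a welcome sharpening rather than a different argument.
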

\begin{proof}
First we note that $\iota \circ \Gamma_{L_t}$ and $\iota \circ \Gamma'_{L_t}, t \in [0,1]$ are two loops since $\Gamma_{L_0} = \Gamma_{L_1}$, $\iota \circ \Gamma_{L_0} = \iota \circ \Gamma_{L_1}$. Moreover, $\iota \circ \Gamma_{L_t}$ is continuous in $t$. Same holds for $\iota \circ \Gamma'_{L_t}$.

We show that the two loops are in $\mathcal{S}_{<1}$. If $\rho_0 : \partial\pi_1\Sigma \to \PSL(2,\bbC)$ is a Schottky representation, then $\rho =\iota \circ \rho_0$ is $n$-Fuchsian. In particular, it is Anosov with respect to the minimal parabolic subgroup. Therefore, $\rho$ is $(1,1,2)$-hyperconvex. Moreover, $\zeta_{\rho}^1(\partial\pi_1\Sigma)$ has Hausdorff dimension smaller than one since the map $\iota$ is algebraic so that Hausdorff dimension remains the same.
\end{proof}

Moreover, numerical calculations suggest that $\gamma$ and $\gamma'$ are not homotopic and their monodromies are summarized in the following table.
\begin{table}[h] \label{table_mon}
	\begin{center}
		\begin{tabular}{ |c|c|c| } 
			\hline
			Words & Monodromy along $\gamma$\text{~~} & Monodromy along $\gamma'$ \\ 
			\hline
			a & $4\pi$ & $20\pi$\\ 
			\hline
			b & $4\pi$ & $12\pi$\\  
			\hline
			A & $4\pi$ & $20\pi$\\ 
			\hline 
			B & $4\pi$ & $12\pi$\\
			\hline
			ab & $4\pi$ & $0$\\  
			\hline 
			AB & $4\pi$ & $0$\\  
			\hline 
			aB & $0$ & $4\pi$\\  
			\hline
			bA & $0$ & $4\pi$\\  
			\hline
			Total change & $24\pi$ & $72\pi$\\  
			\hline  
		\end{tabular}
		\vspace*{0.2cm}
		\caption{Monodromy for the loops $\gamma$ and $\gamma'$}
	\end{center}
\end{table}

\end{document}